\newtheorem{thm}{Theorem}[section]
\newtheorem{lemma}[thm]{Lemma}
\newtheorem{propn}[thm]{Proposition}
\newtheorem{rem}[thm]{Remark}
\newtheorem{assu}[thm]{Assumption}
\newcommand{\bn}{\mathbb N}
\newcommand{\br}{\mathbb R}
\newcommand{\bz}{\mathbb Z}
\newcommand{\cq}{\mbox{$\mathcal Q$}}
\newcommand{\cx}{\mbox{$\Theta$}}
\newcommand{\Lte}{{\mathbb L}^2 ({\hat V},{\hat \mu}_0)}
\newcommand{\hf}{{\hat \Phi}}
\newcommand{\hh}{{\hat H}}
\let\OLDthebibliography\thebibliography
\renewcommand\thebibliography[1]{
  \OLDthebibliography{#1}
  \setlength{\parskip}{0pt}
  \setlength{\itemsep}{0pt plus 0.3ex}
}
\begin{document}

\title{Time-changes of stochastic processes\\associated with resistance forms}
\author{D.~A.~Croydon, B.~M.~Hambly and {T.~Kumagai}}
\date{July 25, 2016}
\maketitle

\begin{abstract}
\noindent
Given a sequence of resistance forms that converges with respect to the Gromov-Hausdorff-vague topology and satisfies a uniform volume doubling condition, we show the convergence of corresponding Brownian motions and local times. As a corollary of this, we obtain the convergence of time-changed processes. Examples of our main results include scaling limits of Liouville Brownian motion, the Bouchaud trap model and the random conductance model on trees and self-similar fractals. For the latter two models, we show that under some assumptions the limiting process is a FIN diffusion on the relevant space.
\end{abstract}

\medskip
\noindent
{\bf AMS 2010 Mathematics Subject Classification}:
Primary 60J35, 60J55; Secondary 28A80, 60J10, 60J45, 60K37.

\smallskip\noindent
{\bf Keywords and phrases}: Bouchaud trap model, FIN diffusion, fractal, Gromov-Hausdorff convergence, Liouville Brownian motion, local time, random conductance model, resistance form, time-change.

\setcounter{tocdepth}{2}

\section{Introduction}

In recent years, interest in time-changes of stochastic processes according to irregular measures has arisen from various sources. Fundamental examples of such time-changed processes include the so-called \emph{Fontes-Isopi-Newman (FIN) diffusion} \cite{FIN}, the introduction of which was motivated by the study of the localisation and aging properties of physical spin systems, and the two-dimensional \emph{Liouville Brownian motion} \cite{Beres,GRV}, which is the diffusion naturally associated with planar Liouville quantum gravity. More precisely, the FIN diffusion is the time-change of one-dimensional Brownian motion by the positive continuous additive functional with Revuz measure given by
\begin{equation}\label{onedfinmeasure}
\nu(dx)=\sum_{i}v_i\delta_{x_i}(dx),
\end{equation}
where $(v_i ,x_i)_{i\in\mathbb{N}}$ is the Poisson point process with intensity $\alpha v^{-1-\alpha}dvdx$, and $\delta_{x_i}$ is the probability measure placing all its mass at $x_i$. Similarly, the two-dimensional Liouville Brownian motion is the time-change of two-dimensional Brownian motion by the positive continuous additive functional with Revuz measure given by
\begin{equation}\label{2dlmeasure}
\nu(dx)=e^{\kappa\gamma(x)-\frac{\kappa^2}{2}\mathbf{E}(\gamma(x)^2)}dx
\end{equation}
for some $\kappa\in(0,2)$, where $\gamma$ is the massive Gaussian free field; actually the latter description is only formal since the Gaussian free field can not be defined as a function in two dimensions. In both cases, connections have been made with discrete models; the FIN diffusion is known to be the scaling limit of the one-dimensional Bouchaud trap model \cite{BC, FIN} and the constant speed random walk amongst heavy-tailed random conductances in one-dimension \cite{CernyEJP}, and the two-dimensional Liouville Brownian motion is conjectured to be the scaling limit of simple random walks on random planar maps \cite{GRV}, see also \cite{DS}. The goal here is to provide a general framework for studying such processes and their discrete approximations in the case when the underlying stochastic process is strongly recurrent, in the sense that it can be described by a resistance form, as introduced by Kigami (see \cite{Kig} for background). In particular, this includes the case of Brownian motion on tree-like spaces and low-dimensional self-similar fractals.

To present our main results, let us start by introducing the types of object under consideration (for further details, see Section \ref{prelimsec}). Let $\mathbb{F}$ be the collection of quadruples of the form $(F,R,\mu,\rho)$, where: $F$ is a non-empty set; $R$ is a resistance metric on $F$ such that $(F,R)$ is complete, separable and locally compact, and moreover closed balls in $(F,R)$ are compact; $\mu$ is a locally finite Borel regular measure of full support on $(F,R)$; and $\rho$ is a marked point in $F$. Note that the resistance metric is associated with a resistance form $(\mathcal{E},\mathcal{F})$ (see Definition \ref{resformdef} below), and we will further assume that for elements of $\mathbb{F}$ this form is regular in the sense of Definition \ref{regulardef}. In particular, this ensures the existence of a related regular Dirichlet form $(\mathcal{E},\mathcal{D})$ on $L^2(F,\mu)$, which we suppose is recurrent, and also a Hunt process $((X_t)_{t\geq 0},\:P_x,\: x\in F)$ that can be checked to admit jointly measurable local times $(L_t(x))_{x\in F,t\geq 0}$. The process $X$ represents our underlying stochastic process (i.e.\ it plays the role that Brownian motion does in the construction of the FIN diffusion and Liouville Brownian motion), and the existence of local times means that when it comes to defining the time-change additive functional, it will be possible to do this explicitly.

Towards establishing a scaling limit for discrete processes, we will assume that we have a sequence $(F_n,R_n,\mu_n,\rho_n)_{n\geq 1}$ in $\mathbb{F}$ that converges with respect to the Gromov-Hausdorff-vague topology (see Section \ref{ghpsec}) to an element $(F,R,\mu,\rho)\in\mathbb{F}$. Our initial aim is to show that it is then the case that the associated Hunt processes $X^n$ and their local times $L^n$ converge to $X$ and $L$, respectively. To do this we assume some regularity for the measures in the sequence -- this requirement is formalised in Assumption \ref{a1}, which depends on the following volume growth property. In the statement of the latter, we denote by $B_n(x,r)$ the open ball in $(F_n,R_n)$ centred at $x$ and of radius $r$, and also $r_0(n):= \inf_{x,y\in F_n,\:x\neq y}R_n(x,y)$ and $r_\infty(n):=\sup_{x,y\in F_n}R_n(x,y)$. We note that this control on the volume yields an equicontinuity property for the local times.

{\defn A sequence $(F_n,R_n,\mu_n,\rho_n)_{n\geq 1}$ in $\mathbb{F}$ is said to satisfy \emph{uniform volume growth with volume doubling (UVD)} if there exist constants $c_1,c_2,c_3\in(0,\infty)$ such that
\[c_1v(r)\leq \mu_n\left(B_{n}(x,r)\right)\leq c_2v(r),\qquad\forall x\in F_n,\:r\in[r_0(n),r_\infty(n)+1]\]
for every $n\geq 1$, where $v:(0,\infty)\rightarrow(0,\infty)$ is non-decreasing function with $v(2r)\leq c_3v(r)$ for every $r\in\mathbb{R}_+$.}

{\assu\label{a1} The sequence $(F_n,R_n,\mu_n,\rho_n)_{n\geq 1}$ in $\mathbb{F}$ satisfies UVD, and also
\begin{equation}\label{ghpconv}
\left(F_n,R_n,\mu_n,\rho_n\right)\rightarrow \left(F,R,\mu,\rho\right),
\end{equation}
in the Gromov-Hausdorff-vague topology, where $(F,R,\mu,\rho)\in\mathbb{F}$.}
\bigskip

It is now possible to state our first main result. We write $D(\mathbb{R}_+,M)$ for the space of cadlag processes on $M$, equipped with the usual Skorohod $J_1$ topology. The definition of equicontinuity of the local times $L^n$, $n\geq 1$, should be interpreted as the conclusion of Lemma~\ref{ltcont}.

{\thm\label{main1} Suppose Assumption \ref{a1} holds. It is then possible to isometrically embed $(F_n,R_n)$, $n\geq 1$, and $(F,R)$ into a common metric space $(M,d_M)$ in such a way that if $X^n$ is started from $\rho_n$, $X$ is started from $\rho$, then
\[\left(X^n_t\right)_{t\geq 0}\rightarrow \left(X_t\right)_{t\geq 0}\]
in distribution in $D(\mathbb{R}_+,M)$. Moreover, the local times of $L^n$ are equicontinuous, and if the finite collections $(x_i^n)_{i=1}^k$ in $F_n$, $n\geq 1$, are such that $d_M(x_i^n,x_i)\rightarrow 0$ for some $(x_i)_{i=1}^k$ in $F$, then it simultaneously holds that
\begin{equation}\label{ltconv1}
\left(L^n_t\left(x_i^n\right)\right)_{i=1,\dots, k, t\geq 0}\rightarrow \left(L_t\left(x_i\right)\right)_{i=1,\dots, k, t\geq 0},
\end{equation}
in distribution in $C(\mathbb{R}_+,\mathbb{R}^k)$.}
\bigskip

From the above result, we further deduce the convergence of time-changed processes. The following assumption adds the time-change measure to the framework.

{\assu\label{a2} Assumption \ref{a1} holds with (\ref{ghpconv}) replaced by
\[\left(F_n,R_n,\mu_n,\nu_n,\rho_n\right)\rightarrow \left(F,R,\mu,\nu,\rho\right),\]
in the (extended) Gromov-Hausdorff-vague topology (see Section \ref{ghpsec}), where $\nu_n$ is a locally finite Borel regular measure on $F_n$, and $\nu$ is a locally finite Borel regular measure on $(F,R)$ with $\nu(F)>0$.}
\bigskip

The time-change additive functional that we consider is the following:
\begin{equation}\label{atdef}
A_t:=\int_FL_t(x)\nu(dx).
\end{equation}
In particular, let $\tau(t):=\inf\{s>0:\:A_s>t\}$ be the right-continuous inverse of $A$, and define a process $X^\nu$ by setting
\begin{equation}
\label{xnudef}
X^\nu_{t}:=X_{\tau(t)}.
\end{equation}
As described in Section \ref{rfsec}, this is the trace of $X$ on the support of $\nu$ (with respect to the measure $\nu$), and its Dirichlet form is given by the corresponding Dirichlet form trace. We define $A^n$, $\tau^n$, and $X^{n,\nu_n}$ similarly. The space $L^1_{\rm loc}(\mathbb{R}_+,M)$ is the space of cadlag functions $\mathbb{R}_+\rightarrow M$ such that $\int_0^Td_M(\rho,f(t))dt<\infty$ for all $T\geq0$, equipped with the topology induced by supposing $f_n\rightarrow f$ if and only if $\int_0^Td_M(f_n(t),f(t))dt\rightarrow 0$ for any $T\geq 0$.

{\cor\label{maincor} (a) Suppose Assumption \ref{a2} holds, and that $\nu$ has full support. Then it is possible to isometrically embed $(F_n,R_n)$, $n\geq 1$, and $(F,R)$ into a common metric space $(M,d_M)$ in such a way that
\begin{equation}\label{xnnu}
X^{n,\nu_n}\rightarrow X^\nu
\end{equation}
in distribution in $D(\mathbb{R}_+,M)$, where we assume that $X^n$ is started from $\rho_n$, and $X$ is started from $\rho$.\\
(b) Suppose Assumption \ref{a2} holds, and that $X$ is continuous. Then \eqref{xnnu} holds in distribution in $L^1_{\rm loc}(\mathbb{R}_+,M)$.}
\bigskip

The above results are proved in Section~\ref{copsec}, following the introduction of preliminary material in Section~\ref{prelimsec}. In the remainder of the article, we demonstrate the application of Theorem~\ref{main1} and Corollary~\ref{maincor} to a number of natural examples. Firstly, we investigate the Liouville Brownian motion associated with a resistance form, showing in Proposition~\ref{lbmconv} that Assumption~\ref{a1} implies the convergence of the corresponding Liouville Brownian motions. This allows us to deduce the convergence of Liouville Brownian motions on a variety of trees and fractals, which we discuss in Example~\ref{lbmexamples}. We note that Liouville Brownian motion associated with a resistance form is a toy
model and we discuss it merely as a simple example of our methods. The more interesting and challenging problem of analysing this process in two dimensions is not possible within our framework. Next, in Section~\ref{bouchsec}, we proceed similarly for the Bouchaud trap model, describing the limiting process as the FIN process associated with a resistance form in Proposition~\ref{btmresult}, and giving an application in Example~\ref{SG55ex}. Related to this, in Section~\ref{rcmsec}, we study the heavy-tailed random conductance model on trees and a class of self-similar fractals, discussing a FIN limit for the so-called constant speed random walk in Propositions~\ref{rcmtreeresult},~\ref{ssfrcmresult} and
Examples~\ref{exa6-5},~\ref{exa6-18}. Heat kernel estimates for the limiting FIN processes will be presented in
a forthcoming paper~\cite{CroHamKum}.

Of the applications outlined in the previous paragraph, one that is particularly illustrative of the contribution of this article is the random conductance model on the (pre-)Sierpi\'nski gasket graphs. More precisely, the random conductance model on a locally finite, connected graph $G=(V,E)$ is obtained by first randomly selecting edge-indexed conductances $(\omega_e)_{e\in E}$, and then, conditional on these, defining a continuous time Markov chain that jumps along edges with probabilities proportional to the conductances. For the latter process, there are two time scales commonly considered in the literature: firstly, for the \emph{variable speed random walk (VSRW)}, the jump rate along edge $e$ is given by $\omega_e$, so that the holding time at a vertex $x$ has mean $(\sum_{e:\:x\in e}\omega_e)^{-1}$; secondly, for the \emph{constant speed random walk (CSRW)}, holding times are assumed to have unit mean. From this description, it is clear that the CSRW is a time-change of the VSRW according to the measure placing mass $\sum_{e:\:x\in e}\omega_e$ on vertex $x$. Here, we will only ever consider conductances that are uniformly bounded below, but this still gives a rich enough model for there to exist a difference in the trapping behaviour experienced by the VSRW and CSRW. Indeed, in the one-dimensional case (i.e.\ when $G$ is $\mathbb{Z}$ equipped with edges between nearest neighbours) when conductances are i.i.d., it is easily checked that the VSRW has as its scaling limit Brownian motion (by adapting the argument of \cite[Appendix A]{CernyEJP} to the VSRW, for example); although the VSRW will cross edges of large conductance many times before escaping, it does so quickly, so that homogenisation still occurs. In the case of random conductances also uniformly bounded from above, the analogous result was proved in \cite{kk} for the VSRW on the fractal graphs shown in Figure \ref{sg}, with limit being Brownian motion on the Sierpi\'nski gasket. In Section \ref{rcmfractalsec}, we extend this result significantly to show the same is true whenever the conductance distribution has at most polynomial decay at infinity. Specifically, writing $X^{n,\omega}$ for the VSRW on the $n$th level graph and $X$ for Brownian motion on the Sierpi\'nski gasket, we prove that, under the annealed law (averaging over both process and environment),
\begin{equation}\label{sgvsrw}
\left(X^{n,\omega}_{5^nt}\right)_{t\geq 0}\rightarrow \left(X_t\right)_{t\geq0};
\end{equation}
the time scaling here is the same as for the VSRW on the unweighted graph. For the CSRW, on the other hand, the many crossings of edges of large conductance lead to more significant trapping, which remains in the limit. In particular, if the conductance distribution satisfies $\mathbf{P}(\omega_e>u)\sim u^{-\alpha}$ for some $\alpha\in(0,1)$, then, as noted above, in the one-dimensional case the CSRW has a FIN diffusion limit \cite{CernyEJP}. Applying our time-change results, we are able to show that the corresponding result holds for the Sierpi\'nski gasket graphs. Namely, writing $X^{n,\omega,\nu}$ for the CSRW on the $n$th level graph, we establish that there exists a constant $c$ such that, again under the annealed law,
\begin{equation}\label{sgcsrw}
\left(X^{n,\omega,\nu}_{c3^{n/\alpha}(5/3)^nt}\right)_{t\geq 0}\rightarrow\left(X^{\nu}_t\right)_{t\geq 0},
\end{equation}
where the limit is now $\alpha$-FIN diffusion on the Sierpi\'nski gasket, which  is time-change of the Brownian motion on the limiting gasket by a Poisson random measure defined similarly to (\ref{onedfinmeasure}), but with Lebesgue measure in the intensity replaced by the appropriate Hausdorff measure. (Note that, in the case that $\mathbf{E}\omega_e<\infty$, our techniques also yield convergence of CSRW to the Brownian motion, see Remark \ref{finitemoments}.) Full details for the preceding discussion are provided in Section \ref{rcmsec}. At the start of the latter section, we also give an expanded heuristic explanation for the appearance of the FIN diffusion as a limit of the CSRW amongst heavy-tailed conductances. We remark that the specific conclusion of this interpretation is dependent on the point recurrence of the processes involved; by contrast, for the random conductance model on $\mathbb{Z}^d$ for $d\geq 2$, the same trapping behaviour gives rise in the limit to the so-called \emph{fractional kinetics process}, for which the time-change and spatial motion are uncorrelated \cite{BarCern, CernyEJP}.

\begin{figure}[t]
\begin{center}
\scalebox{0.12}{\includegraphics{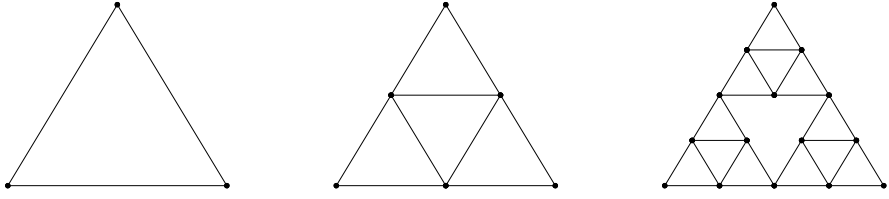}}
\end{center}
\vspace{-10pt}
\caption{The Sierpi\'nski gasket graphs $G_1$, $G_2$, $G_3$.}\label{sg}
\end{figure}

Finally, we note there are many other applications to which the notion of time-change is relevant, so that the techniques of this article might be useful. Although we do not consider it here, one such example is the \emph{diffusion on branching Brownian motion}, as recently constructed in \cite{AndHar}. Moreover, whilst the examples of time-changes described above are based on measures that are constant in time, our main results will also be convenient for describing time-changes based on space-time measures, i.e.\ via additive functionals of the form $A_t:=\int_{F\times\mathbb{R}_+}\mathbf{1}_{\{s\leq L_t(x)\}}\nu(dxds)$. In particular, Theorem~\ref{main1} would be well-suited to extending the study of the scaling limits of randomly trapped random walks, as introduced in \cite{BCCR}, from the one-dimensional setting to trees and fractals.

\section{Preliminaries}\label{prelimsec}

\subsection{Resistance forms and associated processes}\label{rfsec}

In this section, we define precisely the objects of study and outline some of their relevant properties; primarily this involves a recap of results from \cite{FOT} and \cite{Kig}. We start by recalling the definition of a resistance form and its associated resistance metric.

{\defn [{\cite[Definition 3.1]{Kig}}]\label{resformdef} Let $F$ be a non-empty set. A pair $(\mathcal{E},\mathcal{F})$ is called a \emph{resistance form} on $F$ if it satisfies the following five conditions.
\begin{description}
  \item[RF1] $\mathcal{F}$ is a linear subspace of the collection of functions $\{f:F\rightarrow\mathbb{R}\}$ containing constants, and $\mathcal{E}$ is a non-negative symmetric quadratic form on $\mathcal{F}$ such that $\mathcal{E}(f,f)=0$ if and only if $f$ is constant on $F$.
  \item[RF2] Let $\sim$ be the equivalence relation on $\mathcal{F}$ defined by saying $f\sim g$ if and only if $f-g$ is constant on $F$. Then $(\mathcal{F}/\sim,\mathcal{E})$ is a Hilbert space.
  \item[RF3] If $x\neq y$, then there exists a $f\in \mathcal{F}$ such that $f(x)\neq f(y)$.
  \item[RF4] For any $x,y\in F$,
  \begin{equation}\label{resdef}
  R(x,y):=\sup\left\{\frac{\left|f(x)-f(y)\right|^2}{\mathcal{E}(f,f)}:\:f\in\mathcal{F},\:\mathcal{E}(f,f)>0\right\}<\infty.
  \end{equation}
  \item[RF5] If $\bar{f}:=(f \wedge 1)\vee 0$, then $\bar{f}\in\mathcal{F}$ and $\mathcal{E}(\bar{f},\bar{f})\leq\mathcal{E}({f},{f})$ for any $f\in\mathcal{F}$.
\end{description}}
\bigskip

We note that \eqref{resdef} can be rewritten as
\[
R(x,y)=\left(\inf\left\{\mathcal{E}(f,f):\:
f\in \mathcal{F},\: f(x)=1,\: f(y)=0\right\}\right)^{-1},
\]
which is the effective resistance between $x$ and
$y$.
The function $R:F\times F\rightarrow \mathbb{R}$
is actually a metric on $F$ (see \cite[Proposition 3.3]{Kig}); we call this the \emph{resistance metric} associated with $(\mathcal{E},\mathcal{F})$. Henceforth, we will assume that we have a non-empty set $F$ equipped with a resistance form $(\mathcal{E},\mathcal{F})$ such that $(F,R)$ is complete, separable and locally compact. Defining the open ball centred at $x$ and of radius $r$ with respect to the resistance metric by $B_R(x,r):=\left\{y\in F:\:R(x,y)<r\right\}$, and denoting its closure by $\bar{B}_R(x,r)$, we will also assume that $\bar{B}_R(x,r)$ is compact for any $x\in F$ and $r>0$. Furthermore, we will restrict our attention to resistance forms that are regular, as per the following definition.

{\defn [{\cite[Definition 6.2]{Kig}}]\label{regulardef} Let $C_0(F)$ be the collection of compactly supported, continuous (with respect to $R$) functions on $F$, and $\|\cdot\|_F$ be the supremum norm for functions on $F$. A resistance form $(\mathcal{E},\mathcal{F})$ on $F$ is called \emph{regular} if and only if $\mathcal{F}\cap C_0(F)$ is dense in $C_0(F)$ with respect to $\|\cdot\|_F$.}
\bigskip

We next introduce related Dirichlet forms and stochastic processes. First, suppose $\mu$ is a Borel regular measure on $(F,R)$ such that $0<\mu(B_R(x,r))<\infty$ for all $x\in F$ and $r>0$. Moreover, write $\mathcal{D}$ to be the closure of $\mathcal{F}\cap C_0(F)$ with respect to the inner product $\mathcal{E}_1$ on $\mathcal{F}\cap L^2(F,\mu)$ given by
\begin{equation}\label{e1def}
\mathcal{E}_1(f,g):=\mathcal{E}(f,g)+\int_Ffgd\mu.
\end{equation}
Under the assumption that $(\mathcal{E},\mathcal{F})$ is regular, we then have the following. See \cite{FOT} for the definition of a regular Dirichlet form.

{\thm [{\cite[Theorem 9.4]{Kig}}] The quadratic form $(\mathcal{E},\mathcal{D})$ is a regular Dirichlet form on $L^2(F,\mu)$.}
\bigskip

Given a regular Dirichlet form, standard theory then gives us the existence of an associated Hunt process $((X_t)_{t\geq 0},\:P_x,\: x\in F)$ (e.g. \cite[Theorem 7.2.1]{FOT}). Note that such a process is, in general, only specified uniquely for starting points outside a set of zero capacity. However, in this setting every point has strictly positive capacity (see \cite[Theorem 9.9]{Kig}), and so the process is defined uniquely everywhere. Moreover, since we are assuming closed balls are compact, we have from \cite[Theorem 10.4]{Kig} that $X$ admits a jointly continuous transition density $(p_t(x,y))_{x,y\in F,t>0}$.
We note that the Dirichlet form for Brownian motion on $\br^d$ is a resistance form only when $d=1$. However, resistance forms are a rich class that contains various Dirichlet forms  for diffusions on fractals, see \cite{kig1}.

Key to this study will be the existence of local times for $X$. As a first step to introducing these, note that the strict positivity of the capacity of points remarked upon above implies that all points are regular (see \cite[Theorems 1.3.14 and 3.1.10, and Lemma A.2.18]{FukuChen}, for example). Thus $X$ admits local times everywhere (see \cite[(V.3.13)]{BG}). In the following lemma, by studying the potential density of $X$, we check that these local times can be defined in a jointly measurable way and satisfy an occupation density formula.

{\lemma (a) Define the (one-)potential density $(u(x,y))_{x,y\in F}$ of $X$ by setting
\begin{equation}\label{udef}
u(x,y)=\int_0^\infty e^{-t} p_t(x,y)dt.
\end{equation}
It then holds that $u(x,y)<\infty$ for all $x,y\in F$. Furthermore,
\begin{equation}\label{hitlap}
E_x\left(e^{-\tau_y}\right)=\frac{u(x,y)}{u(y,y)},
\end{equation}
where $\tau_y:=\inf\{t>0:\:X_t=y\}$ is the hitting time of $y$ by $X$,
and also
\begin{equation}\label{ufluc}
|u(x,y)-u(x,z)|^2\leq u(x,x)R(y,z)
\end{equation}
for all $x,y,z\in F$.\\
(b) The process $X$ admits jointly measurable local times $(L_t(x))_{x\in F,t\geq 0}$ that satisfy, $P_x$-a.s.\ for any $x$,
\begin{equation}\label{occdens}
\int_0^t \mathbf{1}_A(X_s)ds = \int_AL_t(y)\mu(dy)
\end{equation}
for all measurable subsets $A\subseteq F$ and $t\geq 0$.}
\begin{proof} To prove part (a), we essentially follow the proof of \cite[Theorem 7.20]{Barlow}, and then apply results from \cite{MR}. First, observe that the definition of the resistance metric at (\ref{resdef}) readily implies
\begin{equation}\label{rfluc}
\left|f(x)-f(y)\right|^2\leq \mathcal{E}(f,f)R(x,y)
\end{equation}
for all $f\in\mathcal{F}$, $x,y\in F$. Hence
\[f(x)^2\leq 2f(y)^2+2\left|f(x)-f(y)\right|^2\leq 2f(y)^2+2\mathcal{E}(f,f)R(x,y).\]
Using the local compactness of $(F,R)$, for any point $x\in F$, we can integrate the above over a compact neighbourhood of $x$ to obtain $f(x)^2\leq c\mathcal{E}_1(f,f)$ for any $f\in\mathcal{D}$, where $\mathcal{E}_1$ was defined at \eqref{e1def}. We thus have that $f\mapsto f(x)$ is a bounded linear operator on the Hilbert space $(\mathcal{D},\mathcal{E}_1^{1/2})$, and so by the Riesz representation theorem there exists a function $u(x,\cdot)\in\mathcal{D}$ such that
\begin{equation}\label{repro}
\mathcal{E}_1(u(x,\cdot),f)=f(x)
\end{equation}
for all $f\in\mathcal{D}$. From \eqref{repro}, we immediately obtain that $u(x,x)=\mathcal{E}_1(u(x,\cdot),u(x,\cdot))<\infty$. In combination with \eqref{rfluc}, this implies \eqref{ufluc} and the finiteness of $u(x,y)$ everywhere. Furthermore, if we define an operator on $L^2(F,\mu)$ by setting $Uf(x):=\int_Fu(x,y)f(y)\mu(dy)$, then by arguing exactly as in the proof of \cite[Theorem 7.20]{Barlow}, one can check $\mathcal{E}_1(Uf,g)=\int_Ffg d\mu$ for every $f\in C_0(F)$ and $g\in \mathcal{D}$. It follows that $U$ agrees with the resolvent of $X$ on $C_0(F)$, i.e. $Uf(x):=E_x\int_0^\infty e^{-t}f(X_t)dt$ for all $f\in C_0(F)$, and extending the latter statement to all $f\in L^2(F,\mu)$ is elementary. By the continuity of the transition density in this setting, this implies that the function $u$ can alternatively be defined via \eqref{udef}. To complete the proof of part (a), we note that \eqref{hitlap} is proved in \cite[Theorem 3.6.5]{MR}.

From part (a), we know that $E_x(e^{-\tau_y})$ is a jointly continuous function of $x,y\in F$. Thus, because we also know that all points of $F$ are regular for $X$, we can immediately apply the first part of \cite[Theorem 1]{GK} to obtain that $X$ admits jointly measurable local times $(L_t(x))_{x\in F,t\geq 0}$. Furthermore, since $X$ has a transition density, it holds that $\mu$ is a reference measure for $X$, i.e.\ $\mu(A)=0$ if and only if $U\mathbf{1}_A(x)=\int_0^\infty e^{-t}P_x(X_t\in A)dt=0$ for all $x\in F$ (see \cite[Definition V.1.1]{BG}). Thus we can apply the second part of \cite[Theorem 1]{GK} to confirm (\ref{occdens}) holds.
\end{proof}

We now describe background on time-changes of the Hunt process $X$ from \cite[Section 6.2]{FOT}. First suppose $\nu$ is an arbitrary positive Radon measure on $(F,R)$. As at (\ref{atdef}), define a continuous additive functional $(A_t)_{t\geq 0}$ by setting $A_t:=\int_FL_t(x)\nu(dx)$, and let $(\tau(t))_{t\geq 0}$ be its right-continuous inverse, i.e.\ $\tau(t):=\inf\left\{s>0:\:A_s>t\right\}$. If $G\subseteq F$ is the closed support of $\nu$, then $((\tilde{X})_{t\geq 0},\:P_x,\: x\in G)$ is also a strong Markov process, where $\tilde{X}_t:=X_{\tau(t)}$; this is the \emph{trace} of $X$ on $G$ (with respect to $\nu$). We also define a trace of the Dirichlet form $(\mathcal{E},\mathcal{D})$ on $G$, which we will denote by $(\tilde{\mathcal{E}},\tilde{\mathcal{D}})$, by setting
\begin{equation}\label{formtrace}
\tilde{\mathcal{E}}(g,g):=\inf\left\{\mathcal{E}(f,f):\:f\in\mathcal{D}_e,\:f|_{G}=g\right\},
\end{equation}
\begin{equation}\label{domaintrace}
\tilde{\mathcal{D}}:=\left\{g\in L^2(G,\nu):\:\tilde{\mathcal{E}}(g,g)<\infty\right\},
\end{equation}
where $\mathcal{D}_e$ is the extended Dirichlet space associated with $(\mathcal{E},\mathcal{D})$, i.e.\ the family of $\mu$-measurable functions $f$ on $F$ such that $|f|<\infty$, $\mu$-a.e.\, and there exists an $\mathcal{E}$-Cauchy sequence $(f_n)_{n\geq 0}$ in $\mathcal{D}$ such that $f_n(x)\rightarrow f(x)$, $\mu$-a.e. Connecting these two notions is the following result.

{\thm[{\cite[Theorem 6.2.1]{FOT}}]\label{trace} It holds that $(\tilde{\mathcal{E}},\tilde{\mathcal{D}})$ is a regular Dirichlet form on $L^2(G,\nu)$, and the associated Hunt process is $\tilde{X}$.}
\bigskip

Finally, we note a result that, in the recurrent case, characterises the trace of our Dirichlet form on a compact set. Note that the Dirichlet form $(\mathcal{E},\mathcal{D})$ is said to be recurrent if and only if $1\in \mathcal{D}_e$ and $\mathcal{E}(1,1)=0$.

{\lemma\label{goodtrace} If $(\mathcal{E},\mathcal{D})$ is recurrent and $G$ is compact, then $(\tilde{\mathcal{E}},\tilde{\mathcal{D}})$ is a regular resistance form on $G$, with associated resistance metric $R|_{G\times G}$.}
\begin{proof} Since $(\mathcal{E},\mathcal{D})$ is recurrent, we have that $\mathcal{D}_e=\mathcal{F}$ (see \cite[Proposition 2.13]{KL}).
Thus
\begin{equation}\label{etrace}
\tilde{\mathcal{E}}(g,g)=\inf\left\{\mathcal{E}(f,f):\:f\in\mathcal{F},\:f|_{G}=g\right\},
\end{equation}
and also $\tilde{\mathcal{D}}=\{f|_G:\:f\in\mathcal{F}\}\cap L^2(G,\nu)$. By (\ref{rfluc}), we moreover have that $\{f|_G:\:f\in\mathcal{F}\}\subseteq C(G)\subseteq L^2(G,\nu)$, and so
\begin{equation}\label{dtrace}
\tilde{\mathcal{D}}=\left\{f|_G:\:f\in\mathcal{F}\right\}.
\end{equation}
Finally, we observe that \eqref{etrace} and \eqref{dtrace} give that $(\tilde{\mathcal{E}},\tilde{\mathcal{D}})$ is the trace of the resistance form $(\mathcal{E},\mathcal{F})$ on $G$ in the sense of \cite[Definition 8.3]{Kig}. Since $G$ is closed, by \cite[Theorem 8.4]{Kig}, this implies $(\tilde{\mathcal{E}},\tilde{\mathcal{D}})$ is also a regular resistance form on this set, with associated resistance metric $R|_{G\times G}$.
\end{proof}

\subsection{Gromov-Hausdorff-vague topology}\label{ghpsec}

In this section we introduce the Gromov-Hausdorff-vague topology and an extension that we require. For more details regarding such metrics, see \cite{ADH, ALWtop}. We start by defining a topology on $\mathbb{F}_c$, which is the subset of $\mathbb{F}$ containing elements $(F,R,\mu,\rho)$ such that $(F,R)$ is compact. In particular, for two elements $(F,R,\mu,\rho),(F',R',\mu',\rho')\in\mathbb{F}_c$, we set $\Delta_c((F,R,\mu,\rho),(F',R',\mu',\rho'))$ to be equal to
\begin{eqnarray}\label{ghpmetric}
\inf_{M,\psi,\psi'}\left\{d_M^H\left(\psi(F),\psi'(F)\right)+d_M^P\left(\mu\circ\psi^{-1},\mu'\circ\psi'^{-1}\right)+d_M(\rho,\rho')\right\},
\end{eqnarray}
where the infimum is taken over all metric spaces $M=(M,d_M)$ and isometric embeddings $\psi:(F,R)\rightarrow (M,d_M)$, $\psi':(F',R')\rightarrow (M,d_M)$, and we define $d_M^H$ to be the Hausdorff distance between compact subsets of $M$, and $d_M^P$ to be the Prohorov distance between finite Borel measures on $M$. It is known that $\Delta_c$ defines a metric on the equivalence classes of $\mathbb{F}_c$ (where we say two elements of $\mathbb{F}_c$ are equivalent if there is a measure and root preserving isometry between them), see \cite[Theorem 2.5]{ADH}.

To extend $\Delta_c$ to a metric on the equivalence classes of $\mathbb{F}$, we consider bounded restrictions of elements of $\mathbb{F}$. More precisely, for $(F,R,\mu,\rho)\in \mathbb{F}$, define $(F^{(r)},R^{(r)},\mu^{(r)},\rho^{(r)})$ by setting: $F^{(r)}$ to be the closed ball in $(F,R)$ of radius $r$ centred at $\rho$, i.e.\ $\bar{B}_R(\rho,r)$; $R^{(r)}$ and $\mu^{(r)}$ to be the restriction of $R$ and $\mu$ respectively to  $F^{(r)}$, and $\rho^{(r)}$ to be equal to $\rho$. By assumption, $(F^{(r)},R^{(r)})$ is compact, and so to check that $(F^{(r)},R^{(r)},\mu^{(r)},\rho^{(r)})\in\mathbb{F}_c$ it will suffice to note that: $R^{(r)}$ is a resistance metric on $F^{(r)}$, the associated resistance form $(\mathcal{E}^{(r)},\mathcal{F}^{(r)})$ is regular, and $(\mathcal{E}^{(r)},\mathcal{F}^{(r)})$ is moreover a recurrent regular Dirichlet form. (These claims follow from Theorem \ref{trace} and Lemma \ref{goodtrace}.)

As in \cite[Lemma 2.8]{ADH}, we can check the regularity of the restriction operation with respect to the metric $\Delta_c$ to show that, for any two elements of the space $\mathbb{F}$, the map $r\mapsto \Delta_c((F^{(r)},R^{(r)},\mu^{(r)},\rho^{(r)}),(F'^{(r)},R'^{(r)},\mu'^{(r)},\rho'^{(r)}))$ is cadlag. (NB. In \cite[Lemma 2.8]{ADH}, the metric spaces are assumed to be length spaces, but it is not difficult to remove this assumption.) This allows us to define a function $\Delta$ on $\mathbb{F}^2$ by setting
\begin{eqnarray}
\lefteqn{\Delta\left((F,R,\mu,\rho),(F',R',\mu',\rho')\right)}\nonumber\\
&:=&\int_0^\infty e^{-r}\left(1\wedge\Delta_c((F^{(r)},R^{(r)},\mu^{(r)},\rho^{(r)}),(F'^{(r)},R'^{(r)},\mu'^{(r)},\rho'^{(r)}))
\right) dr,\label{ghvdef}
\end{eqnarray}
and one can check that this is a metric on (the equivalence classes of) $\mathbb{F}$, cf.\ \cite[Theorem 2.9]{ADH}, and also \cite[Proof of Proposition 5.12]{ALWtop}. The associated topology is the Gromov-Hausdorff-vague topology, as defined at \cite[Definition 5.8]{ALWtop}. From \cite[Proposition 5.9]{ALWtop}, we have the following important consequence of convergence in this topology.

{\lemma\label{embeddings} Suppose $(F_n,R_n,\mu_n,\rho_n)$, $n\geq 1$, and $(F,R,\mu,\rho)$ are elements of $\mathbb{F}$ such that $(F_n,R_n,\mu_n,\rho_n)\rightarrow(F,R,\mu,\rho)$ in the Gromov-Hausdorff-vague topology. It is then possible to embed $(F_n,R_n)$, $n\geq 1$, and $(F,R)$ isometrically into the same (complete, separable, locally compact) metric space $(M,d_M)$ in such a way that, for Lebesgue-almost-every $r\geq 0$,
\begin{equation}\label{embedconv}
d_M^H\left(F_n^{(r)},F^{(r)}\right)\rightarrow 0,\qquad d_M^P\left(\mu_n^{(r)},\mu^{(r)}\right)\rightarrow 0,\qquad d_M(\rho_n^{(r)},\rho^{(r)})\rightarrow 0,
\end{equation}
where we have identified the various objects with their embeddings.}
\bigskip

We next note that the measure bounds of UVD transfer to limits under the Gromov-Hausdorff-vague topology. The proof, which is an elementary consequence of the previous result, is omitted.

{\lemma Suppose $(F,R,\mu,\rho)\in\mathbb{F}$ is the limit with respect to the Gromov-Hausdorff-vague topology of a sequence $(F_n,R_n,\mu_n,\rho_n)_{n\geq 1}$ in $\mathbb{F}$ that satisfies UVD. It is then the case that
\begin{equation}\label{uvdforlimit}
c_1v(r)\leq \mu\left(B_{R}(x,r)\right)\leq c_2v(r),\qquad\forall x\in F,\:r\in[r_0,r_\infty+1],
\end{equation}
where $r_0:= \inf_{x,y\in F,\:x\neq y}R(x,y)$ and $r_\infty:=\sup_{x,y\in F}R(x,y)$.}
\bigskip

Finally, we define an extended version of the Gromov-Hausdorff-vague topology for elements of the form $(F,R,\mu,\nu,\rho)$, where $(F,R,\mu,\rho)\in\mathbb{F}$, and $\nu$ is another locally finite Borel regular measure on $(F,R)$ (not necessarily of full support). We do this in the obvious way: for elements  $(F,R,\mu,\nu,\rho)$ and $(F',R',\mu',\nu',\rho')$ such that $(F,R)$ and $(F',R')$ are compact, we include the term $d_M^P\left(\nu\circ\psi^{-1},\nu'\circ\psi'^{-1}\right)$ in the definition of $\Delta_c$ at (\ref{ghpmetric}); in the general case, we use this version of $\Delta_c$ to define $\Delta((F,R,\mu,\nu,\rho),(F',R',\mu',\nu',\rho'))$ as at (\ref{ghvdef}); the induced topology is then the extended Gromov-Hausdorff-vague topology. It is straightforward to check that the natural adaptation of Lemma \ref{embeddings} that includes the convergence $d_M^P(\nu_n^{(r)},\nu^{(r)})$ also holds, where $\nu_n^{(r)}$, $\nu^{(r)}$ is the restriction of $\nu_n$, $\nu$ to $F_n^{(r)}$, $F^{(r)}$, respectively.

\subsection{Local time continuity}

Key to our arguments is the following equicontinuity result for the local times of a sequence satisfying the UVD property. Since the proof is similar to the discrete time version proved for graphs in \cite[Theorem 1.2]{CroyLT}, we only provide a sketch.

{\lemma\label{ltcont} If $(F_n,R_n,\mu_n,\rho_n)_{n\geq 1}$ is a sequence in $\mathbb{F}_c$ satisfying $\sup_nr_\infty(n)<\infty$ and also UVD, then, for each $\varepsilon>0$ and $T>0$,
\[\lim_{\delta\rightarrow0}\sup_{n\geq 1}\sup_{x\in F_n}{P}^{n}_x\left(\sup_{\substack{y,z\in F_n:\\R_n(y,z)\leq\delta}}\sup_{0\leq t\leq T}{\left|L^{n}_{t}(y)-L_t^{n}(z)\right|}\geq \varepsilon\right)=0.\]}

\begin{proof} We start by checking the commute time identity for a resistance form. In particular, if $(F,R,\mu,\rho)\in\mathbb{F}_c$, then we claim that
\begin{equation}\label{commute}
E_x\left(\tau_y\right)+E_y\left(\tau_x\right)=R(x,y)\mu(F)\qquad\forall x,y\in F,
\end{equation}
where $\tau_z$ is the hitting time of $z$ by $X$. Indeed, fix $x,y\in F$. As in the proof of \cite[Proposition 4.2]{Kum}, there exists a function $g_{\{x\}}(y,\cdot)\in \mathcal{F}$ such that: $\mathcal{E}(g_{\{x\}}(y,\cdot),f)=f(y)$ for every $f\in\mathcal{F}$ such that $f(x)=0$; $g_{\{x\}}(y,y)=\mathcal{E}(g_{\{x\}}(y,\cdot),g_{\{x\}}(y,\cdot))=R(x,y)$; and also $g_{\{x\}}(y,x)=0$. By symmetry, we deduce that
\[\mathcal{E}\left(g_{\{x\}}(y,\cdot)+g_{\{y\}}(x,\cdot),f\right)=\mathcal{E}\left(g_{\{x\}}(y,\cdot),f-f(x)\right)+\mathcal{E}\left(g_{\{y\}}(x,\cdot),f-f(y)\right)=0\]
for every $f\in \mathcal{F}$. It follows that $g_{\{x\}}(y,\cdot)+g_{\{y\}}(x,\cdot)$ is constant, and so satisfies
\[g_{\{x\}}(y,\cdot)+g_{\{y\}}(x,\cdot)\equiv g_{\{x\}}(y,x)+g_{\{y\}}(x,x)=R(x,y).\]
Moreover, as at \cite[(4.7)]{Kum}, we have that $g_{\{x\}}(y,\cdot)$ is the occupation density for $X$, started at $y$ and killed at $x$, and so $E_y(\tau_x)=\int_Fg_{\{x\}}(y,z)\mu(dz)$. Combining the latter two results, the identity at (\ref{commute}) follows.

We now suppose $(F_n,R_n,\mu_n,\rho_n)_{n\geq 1}$ is a sequence in $\mathbb{F}_c$ as in the statement of the lemma, and consider the associated local time processes. From \cite[(V.3.28)]{BG}, we have that
\begin{equation}\label{tailforlt}
{P}^{n}_x\left(\sup_{0\leq t\leq T}{\left|L^{n}_{t}(y)-L_t^{n}(z)\right|}\geq \varepsilon\right)\leq 2e^Te^{-\varepsilon/2\delta_n(x,y)},
\end{equation}
where
\[\delta_n(x,y)^2:=1-E_x^n\left(e^{-\tau^n_y}\right)E_y^n\left(e^{-\tau^n_y}\right)\leq E_x^n\left(\tau^n_y\right)+E_y^n\left(\tau^n_x\right)=R_n(x,y)\mu_n(F_n),\]
and the final equality is a consequence of (\ref{commute}). Hence we obtain that
\[\sup_{x,y,z\in F_n}{P}^{n}_x\left(\sup_{0\leq t\leq T}\frac{\left|L^{n}_{t}(y)-L_t^{n}(z)\right|}{\sqrt{R_n(y,z)\mu_n(F_n)}}\geq \varepsilon\right)\leq 2e^Te^{-\varepsilon/2}.\]
Thus if we set
\[\Gamma_n:=\int_{F_n}\int_{F_n}
\exp\Big(\frac{\sup_{0\leq t\leq T}\left|L^{n}_{t}(y)-L_t^{n}(z)\right|}{4\sqrt{R_n(y,z)\mu_n(F_n)}}\Big)\mu_n(dy)\mu_n(dz),\]
then it follows that
\begin{eqnarray}\label{Gamest}
\lim_{\lambda\rightarrow\infty}\sup_{n\geq 1}\sup_{x\in F_n}P_x^n\left(\Gamma_n> \lambda \mu_n(F_n)^2\right)=0.
\end{eqnarray}
The result now follows from a standard argument involving Garsia's lemma, as originally proved in \cite{Garsia}, see also \cite{GRR}; applications to local times appear in \cite{BP, CroyLT}, for example. We simply highlight the differences. Choose $y,z\in F_n$ and $t\in[0,T]$. Then let $(K_i)_{i=0}^\infty$ be a sequence of balls $K_i=B_n(y,2^{1-2i}R_n(y,z))$, so that $K_0$ contains both $y$ and $z$, and $\cap_{i\geq 0}K_i=\{y\}$. Write $f_{K_i}:=\mu_n(K_i)^{-1}\int_{K_i}L^n_t(w)\mu_n(dw)$, and then we deduce that
\begin{eqnarray*}
\lefteqn{e^{|f_{K_i}-f_{K_{i-1}}|/16\sqrt{2^{-2i}R_n(y,z)\mu_n(F_n)}}}\\
&\leq& \frac{1}{\mu_n(K_i)\mu_n(K_{i-1})}\int_{K_i} \int_{K_{i-1}} e^{|L^n_t(w)-L^n_t(w')|/4\sqrt{R_n(w,w')\mu_n(F_n)}}\mu_n(dw)\mu_n(dw')\\
&\leq & cv(2^{1-2i}R_n(y,z))^{-2}\Gamma_n,
\end{eqnarray*}
where the first inequality is an application of Jensen's inequality, and the second is obtained from UVD and the definition of $\Gamma_n$. Summing over $i$ and repeating for a sequence decreasing to $z$ yields
\begin{eqnarray}\label{lteee}
\left|L^{n}_{t}(y)-L_t^{n}(z)\right|\leq 16\sqrt{R_n(y,z)\mu_n(F_n)}\sum_{i=0}^\infty2^{-i}\log\left(\frac{c\Gamma_n}{v(2^{1-2i}R_n(y,z))^{2}}\right).
\end{eqnarray}
Now, suppose $\Gamma_n\leq \lambda \mu_n(F_n)^2$. The UVD property then gives $\Gamma_n\leq c\lambda v(r_\infty(n))$. Together with the doubling property of $v$ and the assumption that $M=\sup_nr_\infty(n)<\infty$ we thus find that
\begin{eqnarray}
{\left|L^{n}_{t}(y)-L_t^{n}(z)\right|}
&\leq & 16\sqrt{R_n(y,z)v(r_\infty(n))}\sum_{i=0}^\infty2^{-i}\log\left(\frac{c\lambda v(r_\infty(n))^2}{v(2^{1-2i}R_n(y,z))^{2}}\right)\nonumber\\
&\leq & c\sqrt{R_n(y,z)v(M)}\max\{1,\log \lambda^{1/c}M, \log R_n(y,z)^{-1}\},\nonumber
\end{eqnarray}
uniformly over $y,z\in F_n$ and $t\in[0,T]$. Combining this estimate with (\ref{Gamest}) completes the proof.
\end{proof}

Note that we also have continuity of the limiting local times.

{\lemma\label{ltcontlimit} If $(F,R,\mu,\rho)\in\mathbb{F}_c$ satisfies (\ref{uvdforlimit}), then the local times $(L_t(x))_{x\in F,t\geq 0}$ of the associated process are continuous in $x$, uniformly over compact intervals of $t$, $P_y$-a.s.\ for any $y\in F$.}
\begin{proof} Arguing as for (\ref{Gamest}), we have that
\[\Gamma:=\int_{F}\int_{F}e^{\sup_{0\leq t\leq T}\left|L_{t}(y)-L_t(z)\right|/4\sqrt{R(y,z)\mu(F)}}\mu(dy)\mu(dz)\]
is a finite random variable, $P_y$-a.s., for any $T<\infty$. Hence, by applying the estimate (\ref{lteee}), we obtain the result.
\end{proof}

\section{Convergence of processes}\label{copsec}

\subsection{Compact case}\label{compsec}

In this section, we prove the first part of Theorem \ref{main1} in the case that the metric spaces $(F_n,R_n)$, $n\geq 1$, and $(F,R)$ are all compact (see Proposition \ref{compactcase} below). Throughout, we assume that Assumption \ref{a1} holds. Note that, by Lemma \ref{embeddings}, under this Gromov-Hausdorff-vague convergence assumption, it is possible to suppose that $(F_n,R_n)$, $n\geq 1$, and $(F,R)$ are isometrically embedded into a common metric space $(M,d_M)$ such that
\begin{equation}\label{hconv}
d_M^H\left(F_n,F\right)\rightarrow 0,\qquad d_M^P\left(\mu_n,\mu\right)\rightarrow 0,\qquad d_M(\rho_n,\rho)\rightarrow 0,
\end{equation}
where we have identified the various objects with their embeddings. Throughout this section, we fix one such collection of embeddings.

Our argument will depend on approximating the processes $X^n$, $n\geq 1$, and $X$ by processes on finite state spaces. We start by describing such a procedure in the limiting case. Let $(x_i)_{i\geq 1}$ be a dense sequence of points in $F$ with $x_1=\rho$. For each $k$, it is possible to choose $\varepsilon_k$ such that
\begin{equation}\label{kcov}
F\subseteq\cup_{i=1}^kB_M(x_i,\varepsilon_k),
\end{equation}
(where $B_M(x,r)$ represents a ball in $(M,d_M)$,) and moreover one can do this in such a way that $\varepsilon_k\rightarrow 0$ as $k\rightarrow\infty$. Choose $\varepsilon^k_1,\varepsilon^k_2,\dots,\varepsilon^k_k\in [\varepsilon_k,2\varepsilon_k]$ such that $(B_M(x_i,\varepsilon_i^k))_{i=1}^k$ are continuity sets for $\mu$ (i.e.\ $\mu(\bar{B}_M(x_i,\varepsilon_i^k)\backslash B_M(x_i,\varepsilon_i^k))=0$); such a choice is possible because, for any $x\in M$, the map $r\mapsto \mu(B_M(x,r))$ has a countable number of discontinuities.
Define sets $K^k_1,K^k_2,\dots,K^k_k$ by setting $K^k_1=\bar{B}_M(x_1,\varepsilon_1^k)$ and
\begin{equation}
K^k_{i+1}=\bar{B}_M(x_{i+1},\varepsilon^k_{i+1})\backslash \cup_{j=1}^i \bar{B}_M(x_{j},\varepsilon^k_j).\label{eq:setdec}
\end{equation}
In particular, the elements of the collection $(K^k_i)_{i=1}^k$ are measurable, disjoint continuity sets, and cover $F$. We introduce a corresponding measurable mapping $\phi^{(k)}:F\rightarrow \{x_1,\dots,x_k\}$ by setting $\phi^{(k)}(x)=x_i$ if $x\in K^k_i$, and a related measure $\mu^{(k)}=\mu\circ (\phi^{(k)})^{-1}$. Of course, the image of $\phi^{(k)}$ might not be the whole of $\{x_1,\dots,x_k\}$ since some of the $K_i^k$ might be empty. So, to better describe it, we introduce the notation $I_k:=\{i:\:K_i^k\neq\emptyset\}$ and $V_k:=\{x_i:\:i\in I_k\}$. (We will often implicitly use the fact that the points $(x_i)_{i\in I_k}$ are distinct, which follows from the definition.) The following simple lemma establishes that the measure $\mu^{(k)}$ charges all the points of $V_k$.

{\lemma\label{muksupport} The support of the measure $\mu^{(k)}$ is equal to $V_k$.}
\begin{proof} Suppose $i\in \{1,\dots,k\}$ and $\mu^{(k)}(\{x_i\})=0$. Then by definition
\[0=\mu(K_i^k)=\mu\left(\bar{B}_R(x_{i},\varepsilon^k_{i})\backslash \cup_{j=1}^{i-1} \bar{B}_R(x_{j},\varepsilon^k_j)\right)=\mu\left({B}_R(x_{i},\varepsilon^k_{i})\backslash \cup_{j=1}^{i-1} \bar{B}_R(x_{j},\varepsilon^k_j)\right),\]
where we use that ${B}_R(x_{i},\varepsilon^k_{i})$ is a continuity set for $\mu$. Now,  ${B}_R(x_{i},\varepsilon^k_{i})\backslash \cup_{j=1}^{i-1} \bar{B}_R(x_{j},\varepsilon^k_j)$ is an open set. Thus, because $\mu$ has full support, the fact that the latter set has zero measure implies that it is empty. Hence ${B}_R(x_{i},\varepsilon^k_{i})\subseteq \cup_{j=1}^{i-1} \bar{B}_R(x_{j},\varepsilon^k_j)$. Since the right-hand side is closed, it follows that $\bar{B}_R(x_{i},\varepsilon^k_{i})\subseteq \cup_{j=1}^{i-1} \bar{B}_R(x_{j},\varepsilon^k_j)$, and therefore $K_i^k=\emptyset$. Thus $i\not\in I_k$. In particular, we have established that the support of $\mu^{(k)}$ contains $V_k$. Since the reverse inclusion is trivial, this completes the proof.
\end{proof}

Next observe that $\sup_{x\in F}R(x,\phi^{(k)}(x))\leq 2\varepsilon_k\rightarrow 0$, and hence $\mu^{(k)}\rightarrow \mu$ weakly as measures on $F$. This will allow us to check that a family of associated time-changed processes $X^{(k)}$ converge to $X$. Indeed, set \[A^{(k)}_t=\int_FL_t(x)\mu^{(k)}(dx).\]
The continuity of the local times $L$ (see Lemma \ref{ltcontlimit}) then implies that, $P_\rho$-a.s., for each $t$,
\[A^{(k)}_t\rightarrow \int_FL_t(x)\mu(dx)=t.\]
Since the processes are increasing, this convergence actually holds uniformly on compact intervals (cf.\ the proof of Dini's theorem). Setting $\tau^{(k)}(t):=\inf\{s>0:\:A^{(k)}_t>s\}$, it follows that, $P_\rho$-a.s., $\tau^{(k)}(t)\rightarrow t$
uniformly on compact intervals. Composing with the process $X$ to define $X^{(k)}_t:=X_{\tau^{(k)}(t)}$, we thus obtain that $X^{(k)}_t\rightarrow X_t$ for all $t\geq 0$ such that $X$ is continuous at $t$, $P_\rho$-a.s. In particular, denoting by $T_X$ the set of times $t$ such that $P_\rho(X\mbox{ is continuous at }t)=1$, this implies the following finite dimensional convergence result.

{\lemma\label{l1} If $t_1,\dots,t_m\in T_X$, then $d_M(X^{(k)}_{t_i},X_{t_i})\rightarrow0$ for each $i=1,\dots,m$, as $k\to\infty$,  $P_\rho$-a.s.}
\bigskip

We next adapt the approximation argument to the processes $X^n$, $n\geq1$. By (\ref{hconv}), it is possible to choose $x_i^n\in F_n$ such that $d_M(x_i^n,x_i)\rightarrow 0$, with the particular choice $x_1^n=\rho_n$. Moreover, by (\ref{kcov}), it is possible to suppose that for each $k$ there exists an integer $n_k$ such that, for $n\geq n_k$, $F_n\subseteq \cup_{i=1}^kB_M(x_i,\varepsilon_k)$. Thus, for each $k$ and $n\geq n_k$ we can define a map $\phi^{n,k}:F_n\rightarrow \{x_1^n,\dots,x_k^n\}$ by setting $\phi^{n,k}(x)=x_i^n$ if $x\in K_i^k$. Note that
\begin{equation}\label{closepoints}
\lim_{k\rightarrow\infty}\limsup_{n\rightarrow\infty}\sup_{x\in F_n}R_n(x,\phi^{n,k}(x))\leq \lim_{k\rightarrow\infty}\limsup_{n\rightarrow\infty}\left(2\varepsilon_k+\sup_{i=1,\dots,k}d_M(x_i^n,x_i)\right)=0.
\end{equation}
We define $\mu_n^{(k)}=\mu_n\circ(\phi^{n,k})^{-1}$, and set
\[A^{n,k}_t=\int_{F_n}L^n_t(x)\mu_n^{(k)}(dx).\]
Moreover, let $\tau^{n,k}(t)=\inf\{s>0:\:A^{n,k}_t>s\}$, and define $X^{n,k}_t:=X^{n}_{\tau^{n,k}(t)}$. It is then straightforward to deduce the following lemma.

{\lemma\label{l2} The law of $X^{n,k}$ under $P^n_{\rho_n}$ converges weakly to the law of $X^{(k)}$ under $P_\rho$ as probability measures on the space $D(\mathbb{R}_+,M)$. In particular, the finite-dimensional distributions converge for any collection of times $t_1,\dots,t_m\geq 0$, $m\in\mathbb{N}$.}

\begin{proof} Fix $k$, and define $V_k$ as above Lemma \ref{muksupport}. Our first step is to characterise the Dirichlet form $(\mathcal{E}^{(k)},\mathcal{D}^{(k)})$ of the Markov chain $X^{(k)}$, which by Theorem \ref{trace} is given by (\ref{formtrace}), (\ref{domaintrace}) with $G=V_k$ and $\nu=\mu^{(k)}$. Since $F$ is compact, we have that $(\mathcal{E},\mathcal{D})=(\mathcal{E},\mathcal{F})$ (see \cite[p.\ 35]{Kig}), and so $(\mathcal{E},\mathcal{D})$ is recurrent. Hence we have from Lemma \ref{goodtrace} that $(\mathcal{E}^{(k)},\mathcal{D}^{(k)})$ is also a resistance form with associated resistance metric $R^{(k)}:=R|_{V_k\times V_k}$. In particular, we obtain that
\[\mathcal{E}^{(k)}(f,f)=\frac12\sum_{x,y\in V_k}c^{(k)}(x,y)(f(y)-f(x))^2,\]
where the conductances $(c^{(k)}(x,y))_{x,y\in V_k}$ are uniquely determined by the resistance $R^{(k)}$ \cite[Theorem 1.7]{Kigdendrite}.

We similarly have that the Dirichlet form $(\mathcal{E}^{n,k},\mathcal{D}^{n,k})$ of the Markov chain $X^{n,k}$ is given by
\[\mathcal{E}^{n,k}(f,f)=\frac12\sum_{x,y\in V_{n,k}}c^{n,k}(x,y)(f(y)-f(x))^2,\]
where $V_{n,k}:=\{x^n_i:\:i\in I_k\}$, and we note that for large $n$ we have that the cardinality of $V_{n,k}$ and $V_k$ are both equal. We will now check that
\begin{equation}\label{condlim}
\left(c^{n,k}(x_i^n,x_j^n)\right)_{i,j\in I_k}\rightarrow \left(c^{(k)}(x_i,x_j)\right)_{i,j\in I_k}.
\end{equation}
Observe that, from the definition of the resistance metric, we have $c^{n,k}(x_i^n,x_j^n)\leq R_n(x_i^n,x_j^n)^{-1}$. Hence we find that
\[\limsup_{n\rightarrow\infty}\max_{\substack{i,j\in I_k:\\i\neq j}}c^{n,k}(x_i^n,x_j^n)\leq
\max_{\substack{i,j\in I_k:\\i\neq j}}R(x_i,x_j)^{-1}<\infty.\]
In particular, for any subsequence $(c^{n_m,k}(x_i^{n_m},x_j^{n_m}))_{i,j\in I_k}$, we have a convergent subsubsequence $(c^{n_{m_l},k}(x_i^{n_{m_l}},x_j^{n_{m_l}}))_{i,j\in I_k}$ with limit $(\tilde{c}(x_i,x_j))_{i,j\in I_k}$. Define an associated form $(\tilde{\mathcal{E}},\tilde{\mathcal{D}})$ by setting
\[\tilde{\mathcal{E}}(f,f)=\frac12\sum_{x,y\in V_k}\tilde{c}(x,y)(f(y)-f(x))^2,\]
and $\tilde{\mathcal{D}}:=\{f:V_k\rightarrow \mathbb{R}\}$, and let $\tilde{R}$ be the associated resistance (which may \emph{a priori} be infinite between pairs of vertices). It is then an elementary exercise to check that $c^{n_{m_l},k}\rightarrow \tilde{c}$ implies $(R^{n_{m_l}}(x_i^{n_{m_l}},x_j^{n_{m_l}}))_{i,j\in I_k}\rightarrow (\tilde{R}(x_i,x_j))_{i,j\in I_k}$. However, we also know $(R^{n_{m_l}}(x_i^{n_{m_l}},x_j^{n_{m_l}}))_{i,j\in I_k}\rightarrow ({R}^{(k)}(x_i,x_j))_{i,j\in I_k}$, and so it must be the case that $\tilde{R}=R^{(k)}$. In turn, this implies $\tilde{c}=c^{(k)}$ (see \cite[Theorem 1.7]{Kigdendrite}), and the conclusion at (\ref{condlim}) follows as desired.

Next, note that for each $i\in I_k$
\begin{equation}\label{mconv}
\mu_n^{(k)}(\{x_i^n\})=\mu_n\left(K_i^k\right)\rightarrow \mu\left(K_i^k\right)=\mu^{(k)}(\{x_i\})>0,
\end{equation}
where we have applied that $\mu_n\rightarrow\mu$ weakly, and that $K_i^k$ is a continuity set for the limiting measure. The fact that the limit is strictly positive was proved in Lemma \ref{muksupport}. These observations will allow us to check convergence of the generators. Specifically, the generator of $X^{(k)}$ is given by
\[\Delta^{(k)}f(x_i)=\frac{1}{\mu^{(k)}(\{x_i\})}\sum_{j\in I_k}c^{(k)}(x_i,x_j)(f(x_j)-f(x_i)).\]
Similarly, if we define $\pi_{n,k}:V_{n,k}\rightarrow V_k$ by $x_i^n\mapsto x_i$ (which is a bijection for large $n$), then the generator of $\pi_{n,k}(X^{n,k})$ is given by
\[\Delta^{n,k}f(x_i)=\frac{1}{\mu_{n}^{(k)}(\{x_i^n\})}\sum_{j\in I_k}c^{n,k}(x_i^n,x_j^n)(f(x_j)-f(x_i)).\]
Hence, (\ref{condlim}) and (\ref{mconv}) imply that
\[\max_{i\in I_k}\left|\Delta^{(k)}f(x_i)-\Delta^{n,k}f(x_i)\right|\rightarrow 0\]
for any $f:V_k\rightarrow \mathbb{R}$. Since the starting points of the processes satisfy $\pi_{n,k}(X^{n,k}_0)= X^{(k)}_0=\rho$ (as local time accumulates immediately), this generator convergence is enough to establish the distributional convergence  $\pi_{n,k}(X^{n,k})\rightarrow X^{(k)}$  (see \cite[Theorem 19.25]{Kall}). To complete the proof of the first claim, it is thus enough to recall that $d_M(x_i^n,\pi_{n,k}(x_i^n))=d_M(x_i^n,x_i)\rightarrow 0$ for each $i$.

For the claim regarding finite-dimensional distributions, one notes that convergence in the space $D(\mathbb{R}_+,M)$ implies convergence of finite-dimensional distributions at times $t_1,\dots,t_m$ that are continuity times for the process $X^{(k)}$, i.e.\ times at which $X^{(k)}$ is continuous, $P_\rho$-a.s. Furthermore, it is elementary to check that every $t\geq 0$ is a continuity time for the finite state space continuous time Markov chain $X^{(k)}$.
\end{proof}

The remaining ingredient we need to establish the result of interest is the following lemma.

{\lemma\label{l3} The laws of $X^n$ under $P_{\rho_n}^n$, $n\geq 1$, form a tight sequence in $D(\mathbb{R}_+,M)$. Moreover, for any $\varepsilon>0$ and $t\geq 0$,
\begin{equation}\label{merging}
\lim_{k\rightarrow\infty}\limsup_{n\rightarrow\infty}P^{n}_{\rho_n}\left(R_n\left(X^n_t, X^{n,k}_t\right)>\varepsilon\right)=0.
\end{equation}}
\begin{proof} To verify tightness, it will suffice to check Aldous' tightness criteria (see, for example, \cite[Theorem 16.11]{Kall}): for any bounded sequence of $X^n$ stopping times $\sigma_n$ and any sequence $\delta_n\rightarrow 0$, it holds that, for $\varepsilon>0$, $P_{\rho_n}^n(R_n(X^n_{\sigma_n},X^n_{\sigma_n+\delta_n})>\varepsilon)$. Applying the strong Markov property, to establish this it will be enough to show that
\begin{equation}\label{tightness}
\sup_{x\in F_n}P_{x}^n\left(R_n(x,X^n_{\delta_n})>\varepsilon\right)\rightarrow 0.
\end{equation}
To do this, we note that the UVD condition implies the following exit time estimate
\begin{equation}\label{etest}
\sup_{x\in F_n}P^{n}_{x}
\left(\sup_{0\leq t\leq \delta}R_n\left(x,X^{n}_t\right)>\varepsilon\right)\leq c_1 e^{-\frac{c_2 \varepsilon}{v^{-1}(\delta/\varepsilon)}},
\end{equation}
uniformly in $n$, where $v$ is the function appearing in the definition of UVD (see \cite[Proposition 4.2 and Lemma 4.2]{Kum}). Moreover, the doubling property of $v$ implies that $v(r)\geq c_3r^{c_4}$ for $r\leq 1$, and so $v^{-1}(\delta/\varepsilon)\leq c_5(\delta/\varepsilon)^{c_6}$ for $\delta$ suitable small. The result at (\ref{tightness}) follows.

To prove (\ref{merging}), first note that
\begin{eqnarray*}
\left|A^{n,k}_t-t\right|&=&\left|\int_{F_n}L^n_t(x)\mu_n^{(k)}(dx)-\int_{F_n}L^n_t(x)\mu_n(dx)\right|\\
&\leq&\int_{F_n}\left|L^n_t(\phi^{n,k}(x))-L^n_t(x)\right|\mu_n(dx)\\
&\leq & \mu_n(F_n)\sup_{x\in F_n}\left|L^n_t(\phi^{n,k}(x))-L^n_t(x)\right|.
\end{eqnarray*}
Now, by (\ref{hconv}), $\mu_n(F_n)\rightarrow\mu(F)$, and the compactness of the space $(F,R)$ implies that the latter is a finite limit. Hence, also applying (\ref{closepoints}) and the local time equicontinuity result of Lemma \ref{ltcont}, it follows that
\[\lim_{k\rightarrow\infty}\limsup_{n\rightarrow\infty}
{P}^n_{\rho_n}\left(\sup_{0\leq t\leq T}\left|A^{n,k}_t-t\right|>\varepsilon\right)=0.\]
Taking inverses, we thus find that
\[\lim_{k\rightarrow\infty}\limsup_{n\rightarrow\infty}
{P}^n_{\rho_n}\left(\sup_{0\leq t\leq T}\left|{\tau^{n,k}(t)}-t\right|>\varepsilon\right)=0.\]
From this, we see that, for any $t,\varepsilon,\delta\geq 0$,
\begin{eqnarray*}
{\lim_{\delta\rightarrow0}\limsup_{n\rightarrow\infty}P^{n}_{\rho_n}
\left(R_n\left(X^n_t, X^{n,k}_t\right)>\varepsilon\right)}&\leq&\lim_{\delta\rightarrow0}\limsup_{n\rightarrow\infty}P^{n}_{\rho_n}
\left(\sup_{s\in[t-\delta,t+\delta]\cap\mathbb{R}_+}R_n\left(X^n_t, X^{n}_s\right)>\varepsilon\right)\\
&\leq&\lim_{\delta\rightarrow0}\limsup_{n\rightarrow\infty}\sup_{x\in F_n}
P^{n}_{x}
\left(\sup_{s\in[0,2\delta]}R_n\left(x, X^{n}_s\right)>\varepsilon\right)\\
&=&0,
\end{eqnarray*}
where to deduce the second inequality, we apply the Markov property at time $\max\{0,t-\delta\}$, and (\ref{etest}) to deduce the equality.
\end{proof}

Piecing together Lemmas \ref{l1} and \ref{l2}, and (\ref{merging}), we obtain that the finite-dimensional distributions of $X^n$ converge to those of $X$ for any collection of times $t_1,\dots,t_m\in T_X$, where we recall that $T_X$ is the set of continuity times of $X$ (see \cite[Theorem 4.28]{Kall}). Together with the tightness of $X^n$, as established in Lemma \ref{l3}, we arrive at the desired conclusion by applying \cite[Theorem 16.10]{Kall}.

{\propn \label{compactcase} The law of $X^{n}$ under $P^n_{\rho_n}$ converges weakly to the law of $X$ under $P_\rho$ as probability measures on the space $D(\mathbb{R}_+,M)$.}

\subsection{Locally compact case}

In this section, we explain how to extend from the compact case to the locally compact case. The proof will involve considering the trace of the relevant processes on bounded subsets (cf.\ the proof of \cite[Theorem 1.4]{BCK}). Key to this approach is the following lemma, which is an immediate consequence of Lemma \ref{goodtrace}. (Recall that we are assuming $(\mathcal{E},\mathcal{D})$ is recurrent for $(F,R,\mu,\rho)\in\mathbb{F}$.)

{\lemma\label{rtrace} Let $(F,R,\mu,\rho)\in\mathbb{F}$. For $r\geq 0$, let $(\mathcal{E}^{(r)},\mathcal{D}^{(r)})$ be the trace of $(\mathcal{E},\mathcal{D})$ on $F^{(r)}$ with respect to the measure $\mu^{(r)}$. Then $(\mathcal{E}^{(r)},\mathcal{D}^{(r)})$ is a resistance form on $F^{(r)}$ with associated resistance metric $R^{(r)}$.}
\bigskip

A second key ingredient for our argument is the following uniform exit time estimate for sequences of resistance forms satisfying UVD.

{\lemma \label{exittimes} Suppose $(F_n,R_n,\mu_n,\rho_n)_{n\geq 1}$ is a sequence in $\mathbb{F}$ satisfying UVD, then, for any $T<\infty$,
\[\lim_{r\rightarrow\infty}\sup_{n\geq 1}P_{\rho_n}^n\left(\sup_{0\leq t\leq T}R_n(\rho_n,X_t^n)> r\right)=0.\]}
\begin{proof} Similarly to (\ref{etest}), we have by \cite[Proposition 4.2 and Lemma 4.2]{Kum} that
\[\sup_{n\geq 1}P^{n}_{\rho_n}
\left(\sup_{0\leq t\leq T}R_n\left(x,X^{n}_t\right)>r\right)\leq c_1 e^{-\frac{c_2 r}{v^{-1}(T/r)}}.\]
Letting $r\rightarrow\infty$ establishes the result.
\end{proof}

We are now ready to prove the main result of this section, which establishes the first claim of Theorem \ref{main1}.

{\propn \label{procconv} Suppose $(F_n,R_n,\mu_n,\rho_n)$, $n\geq 1$, and $(F,R,\mu,\rho)$ satisfy Assumption \ref{a1}. It is then possible to embed $(F_n,R_n)$, $n\geq 1$, and $(F,R)$ isometrically into the same metric space $(M,d_M)$ in such a way that the law of $X^n$ under $P^n_{\rho_n}$ converges weakly to the law of $X$ under $P_{\rho}$ as probability measures on the space $D(\mathbb{R}_+,M)$.}

\begin{proof} Under the assumption of the proposition, it is possible to suppose all the objects of the discussion have been isometrically embedded into a common metric space $(M,d_M)$ in the way described in Lemma \ref{embeddings}. Define $(\mathcal{E}^{(r)},\mathcal{D}^{(r)})$ as in the statement of Lemma \ref{rtrace}. By Theorem \ref{trace}, we have that this is a regular Dirichlet form on $L^2(F^{(r)}, \mu^{(r)})$, and the associated process $X^{(r)}$ is given by a time-change according to the additive functional
\[A^{(r)}_t:=\int_FL_t(x)\mu^{(r)}(dx).\]
By monotonicity and the fact that the various additive functionals are increasing, we have that $A^{(r)}_t\rightarrow \int_FL_t(x)\mu(dx) =t$ uniformly on compact time intervals, $P_\rho$-a.s. Similarly to the proof of Lemma \ref{l1}, it follows that if $t_1,\dots,t_m\in T_X$ for any $m\in\mathbb{N}$ (where again we denote by $T_X$ the continuity times of $X$), then $P_\rho$-a.s., $d_M(X^{(r)}_{t_i},X_{t_i})\rightarrow 0$ for each $i=1,\dots,m$. Moreover, writing $\tau^{(r)}$ for the right-continuous inverse of $A^{(r)}$, we have that for any bounded sequence of $X^{(r)}$ stopping times $\sigma_r$ and any sequence $\delta_r\rightarrow0$, it holds that, for any $\varepsilon,\delta>0$,
\begin{eqnarray*}
P_\rho\left(R\left(X^{(r)}_{\sigma_r},X^{(r)}_{\sigma_r+\delta_r}\right)>\varepsilon\right)
&=&P_\rho\left(R\left(X_{\tau^{(r)}(\sigma_r)},X_{\tau^{(r)}(\sigma_r+\delta_r)}\right)>\varepsilon\right)\\
&\leq&\sup_{x\in F}P_x\left(\sup_{s\leq \delta}R\left(x,X_s\right)>\varepsilon\right)+o(1),
\end{eqnarray*}
as $r\rightarrow\infty$. (We note that $\tau^{(r)}(\sigma_r)$ is a stopping time for $X$.) Since by (\ref{uvdforlimit}) we know that the limiting space $(F,R,\mu,\rho)$ satisfies uniform volume doubling, we can again apply \cite[Proposition 4.2 and Lemma 4.2]{Kum} as at (\ref{etest}) to deduce that the probability above is bounded by $c_1e^{-c_2\varepsilon/v^{-1}(\delta/\varepsilon)}$. Letting $\delta\rightarrow 0$, we obtain that Aldous' tightness criteria holds (cf.\ the proof of Lemma \ref{l3}), and so the laws of $X^{(r)}$ under $P_\rho$ are tight in $D(\mathbb{R}_+,M)$. Combining this with the above convergence of finite dimensional distributions, we obtain that, under $P_\rho$, $X^{(r)}$ converges to $X$ in distribution in the space $D(\mathbb{R}_+,M)$.

Next, let $X^{n,r}$ be the trace of $X^n$ on $F_n^{(r)}$ with respect to $\mu_n^{(r)}$. By Lemma \ref{rtrace}, the Dirichlet form of this process, $(\mathcal{E}_n^{(r)},\mathcal{D}_n^{(r)})$ say, is actually a resistance form with associated resistance metric $R_n^{(r)}$, cf.\ the corresponding result in the limiting case. Hence, recalling we have embedded all the relevant objects into $M$ in the way described by Lemma \ref{embeddings}, Proposition \ref{compactcase} yields that, for Lebesgue-almost-every $r\geq 0$, we have that the law of $X^{n,r}$ under $P^n_{\rho_n}$ converges weakly to the law of $X^{(r)}$ under $P_\rho$ as probability measures on the space $D(\mathbb{R}_+,M)$.

Finally, we observe that if $\sup_{0\leq t\leq T+1}R_n(\rho_n,X^n_t)\leq r$, then the time-change functional describing $X^{n,r}$ satisfies
\[A^{n,r}_t=\int_{F_n}L_t^n(x)\mu_n^{(r)}(dx) =\int_{F_n}L_t^n(x)\mu_n(dx)=t\]
for $t\leq T+1$. It follows that $X^{n,r}_t=X^n_t$ for $t\leq T$. Thus we find that, for any $\varepsilon>0$,
\[P_{\rho_n}^n\left(\sup_{0\leq t\leq T}R_n(X^n_t,X^{n,r}_t)>\varepsilon\right)
\leq P_{\rho_n}^n\left(\sup_{0\leq t\leq T+1}R_n(\rho_n,X^n_t)> r\right).\]
By Lemma \ref{exittimes}, this converges to $0$ as $r\rightarrow\infty$, uniformly in $n\geq 1$. Combining this with the conclusions of the previous two paragraphs completes the result.
\end{proof}

\subsection{Convergence of local times}\label{ltconvsec}

Again we suppose that the spaces $(F_n,R_n)$, $n\geq 1$, and $(F,R)$ are isometrically embedded into a common metric space $(M,d_M)$ in such a way that the conclusion of Lemma \ref{embeddings} holds. Given the convergence result of Proposition \ref{procconv} (and \cite[Theorem 4.30]{Kall}, for example), it is further possible to suppose that $X^n$ started from $\rho_n$ and $X$ started from $\rho$ are coupled so that $X^n\rightarrow X$ in $D(\mathbb{R}_+,M)$, almost-surely. We will suppose that this is the case throughout this section, and write the joint probability measure as $P$. To prove the finite dimensional convergence of local times as at (\ref{ltconv1}), we will follow an approximation argument, based on averaging over small balls. To this end, it is useful to introduce the following functions: for $x\in M$, $\delta>0$,
\[f_{\delta,x}(y):=\max\left\{0,\delta-d_M(x,y)\right\}.\]
An immediate consequence of the continuity of local times of $X$ is the following lemma.

{\lemma \label{ltl1} $P$-a.s., for any $x\in F$ and $T\geq0$, as $\delta\rightarrow0$,
\[\sup_{t\in[0,T]}\left|\frac{\int_0^tf_{\delta,x}(X_s)ds}{\int_Ff_{\delta,x}(y)\mu(dy)}- L_t(x)\right|\rightarrow 0.\]}
\begin{proof} For the case when $F$ is compact, the result follows easily from Lemma \ref{ltcontlimit} (and the occupation density formula of \eqref{occdens}). In the case when $F$ is only locally compact, we note that on the event $\sup_{t\in[0,T]}R(\rho,X_s)\leq r$ it is the case that the local times of $X$ are identical to the local times of $X^{(r)}$ up to time $T$. Since the latter are continuous functions for each $t>0$, almost-surely, then so are the local times of $X$ for $t\in[0,T]$, almost-surely on  $\sup_{t\in[0,T]}R(\rho,X_s)\leq r$. Taking $r\rightarrow\infty$, and then $T\rightarrow\infty$, we deduce that the local times of $X$ are continuous functions for each $t>0$, almost-surely, and the result follows in this case as well.
\end{proof}

{\lemma \label{ltl2}$P$-a.s., for any $x\in F$, $T\geq0$ and $\delta>0$, as $n\rightarrow\infty$,
\[\sup_{t\in[0,T]}\left|\frac{\int_0^tf_{\delta,x}(X^n_s)ds}{\int_{F_n}f_{\delta,x}(y)\mu_n(dy)}-\frac{\int_0^tf_{\delta,x}(X_s)ds}{\int_Ff_{\delta,x}(y)\mu(dy)}\right| \rightarrow 0.\]}
\begin{proof} Fix $x\in F$. It is then possible to choose $r$ such that $\bar{B}_M(x,\delta)\cap{F}\subseteq F^{(r)}$ for every $\delta<1$. Moreover, since our choice of embeddings satisfies the conclusions of Lemma \ref{embeddings}, we may further suppose that $\mu_n^{(r)}\rightarrow \mu^{(r)}$ weakly as probability measures on $M$. It follows that
\[{\int_{F_n}f_{\delta,x}(y)\mu_n(dy)}={\int_{M}f_{\delta,x}(y)\mu_n^{(r)}(dy)}
\rightarrow {\int_{M}f_{\delta,x}(y)\mu^{(r)}(dy)}={\int_{F}f_{\delta,x}(y)\mu(dy)}>0,\]
where the strict positivity of the limit is a simple consequence of the fact that $\mu$ has full support. Thus it remains to show that, for any $T\geq 0$,
\begin{equation}\label{c0}
\sup_{t\in [0,T]}\left|\int_0^tf_{\delta,x}(X^n_s)ds-\int_0^tf_{\delta,x}(X_s)ds\right|\rightarrow0.
\end{equation}
To begin with, suppose that $X$ is continuous at time $t$. It is then the case that, for each $n$, there exists a homeomorphism $\lambda_n:[0,t]\rightarrow[0,t]$ with $\lambda_n(0)=0$ such that
\begin{equation}\label{c1}
\sup_{s\in[0,t]}\left|s-\lambda_n(s)\right|\rightarrow 0,
\end{equation}
and also
\begin{equation}\label{c2}
\sup_{s\in[0,t]}d_M\left(X^n_{\lambda_n(s)},X_s\right)\rightarrow 0.
\end{equation}
Now,
\begin{eqnarray*}
\left|\int_0^tf_{\delta,x}(X^n_{s})ds-\int_0^tf_{\delta,x}(X_{s})ds\right|&\leq&
\int_0^t\left|f_{\delta,x}(X^n_{\lambda_n(s)})-f_{\delta,x}(X_{s})\right|d\lambda_n(s)\\
&&+\int_0^t\left|f_{\delta,x}(X_{\lambda^{-1}_n(s)})-f_{\delta,x}(X_{s})\right|ds.
\end{eqnarray*}
The first term in the upper bound here converges to zero by (\ref{c2}). As for the second term, we have from (\ref{c1}) that $d_M(X_{\lambda^{-1}_n(s)},X_s)\rightarrow 0$ whenever $X$ is continuous at $s$. Since the times at which $X$ is not continuous is at most countable, the dominated convergence theorem yields that the second term also converges to zero, thereby establishing the limit (\ref{c0}) pointwise at times at which $X$ is continuous. To extend to the full result is straightforward, using again that the times at which
$X$ is not continuous is countable, as well as the monotonicity and continuity of the limit.
\end{proof}

{\lemma\label{ltl3} For any $x\in F$ and $T\geq0$, if $x_n\in F_n$ is such that $d_M(x_n,x)\rightarrow 0$, then
\[\lim_{\delta\rightarrow 0}\limsup_{n\rightarrow\infty}P\left(\sup_{t\in[0,T]}\left|\frac{\int_0^tf_{\delta,x}(X^n_s)ds}{\int_{F_n}f_{\delta,x}(y)\mu_n(dy)}-L^n_t(x_n)\right|>\varepsilon\right)=0.\]}
\begin{proof} For large $n$, we have that by the occupation density formula \eqref{occdens}
\[\left|\frac{\int_0^tf_{\delta,x}(X^n_s)ds}{\int_{F_n}f_{\delta,x}(y)\mu_n(dy)}-L^n_t(x_n)\right|\leq
\sup_{y,z\in \bar{B}_n(x_n,2\delta)}\left|L^n_t(y)-L^n_t(z)\right|.\]
Thus if the sequence $(F_n,R_n,\mu_n,\rho_n)_{n\geq 1}$ satisfies $\sup_n r_\infty(n)<\infty$, then the result follows from the local time equicontinuity result of Lemma \ref{ltcont}. In the general case, it is possible to obtain the result by considering the restriction to bounded subsets as in the last part of the proof of Proposition \ref{procconv}.
\end{proof}

From Lemmas \ref{ltl1}, \ref{ltl2} and \ref{ltl3}, we deduce that for any $x\in F$ and $T\geq 0$, if $x_n\in F_n$ is such that $d_M(x_n,x)\rightarrow 0$, then $(L_t^n(x_n))_{t\in[0,T]}\rightarrow(L_t(x))_{t\in[0,T]}$ in $P$-probability in $C([0,T],\mathbb{R})$. This result immediately extends to finite collections of points, which is enough to establish (\ref{ltconv1}).

\subsection{Time-changed processes}

In this section, we prove Corollary \ref{maincor}, starting by showing convergence of the time-change additive functionals.

{\propn\label{atn} If Assumption \ref{a2} holds, then $(A^n_t)_{t\geq0}\rightarrow (A_t)_{t\geq0}$ in distribution in the space $C(\mathbb{R}_+,\mathbb{R})$, simultaneously with the convergence of processes $X^n\rightarrow X$ in $D(\mathbb{R}_+,\mathbb{R})$, where we assume that $X^n$ is started from $\rho_n$, and $X$ is started from $\rho$.
}
\begin{proof} We first prove the result in the case that the underlying spaces are compact, i.e.\ when $(F_n,R_n,\mu_n,\rho_n)\in\mathbb{F}_c$, $n\geq 1$, and $(F,R,\mu,\rho)\in\mathbb{F}_c$. Suppose all the objects are isometrically embedded into a common space in the way described at (\ref{hconv}), and let $(x_i)_{i\geq 1}$ be as in Section~\ref{compsec}. Moreover, for each $k$, define $(K_i^k)_{i=1}^k$ as in \eqref{eq:setdec}, but with each set chosen to be a continuity set for the measure $\nu$, rather than for $\mu$. Then, for any $T\geq 0$, we have from the continuity of local times (Lemma \ref{ltcontlimit}) that, $P_{\rho}$-a.s.,
\[\sup_{t\in[0,T]}\left|A_t-\sum_{i=1}^kL_t(x_i)\nu(K_i^k)\right|\leq \nu(F)\sup_{t\in[0,T]}\sup_{\substack{y,z\in F:\\R(y,z)\leq 4\varepsilon_k}}\left|L_t(y)-L_t(z)\right|\rightarrow 0.\]
Next, from (\ref{ltconv1}), we deduce that
\begin{equation*}
\left(\sum_{i=1}^kL^n_t(x_i^n)\nu^n(K_i^k)\right)_{t\geq0}\rightarrow \left(\sum_{i=1}^kL_t(x_i)\nu(K_i^k)\right)_{t\geq0}
\end{equation*}
in distribution in $C(\mathbb{R}_+,\mathbb{R})$, where $x_i^n$ are also chosen as in Section \ref{compsec}. Furthermore, for large $n$, we have that
\[\sup_{t\in[0,T]}\left|A_t^n-\sum_{i=1}^kL^n_t(x_i^n)\nu^n(K_i^k)\right|\leq \nu_n(F_n)\sup_{t\in[0,T]}\sup_{\substack{y,z\in F_n:\\R_n(y,z)\leq 4\varepsilon_k}}\left|L^n_t(y)-L^n_t(z)\right|.\]
Under $P_{\rho_n}^n$, this converges to zero in probability as $n\rightarrow\infty$ and then $k\rightarrow\infty$ by Lemma \ref{ltcont}. Noting that, from Theorem \ref{main1}, the convergence of local times at (\ref{ltconv1}) occurs simultaneously with the convergence of processes, the desired result follows.

For the general case, one again proceeds by considering the restriction to bounded subsets similarly to the proof of Proposition \ref{procconv}. For this, it is useful to note that it is enough to consider radii $r$ that are continuity sets for both $\mu$ and $\nu$, since the collections of points of discontinuity of the maps $r\mapsto \mu(B_R(\rho,r))$ and $r\mapsto \nu(B_R(\rho,r))$ are both countable.
\end{proof}

We next check the divergence of the additive functional $(A_t)_{t\geq0}$, as defined at (\ref{atdef}).

{\lemma For $(F,R,\mu,\rho)\in\mathbb{F}$, and $\nu$ a locally finite Borel regular measure on $(F,R)$ with $\nu(F)>0$, we have $A_t\rightarrow\infty$, $P_x$-a.s.\ for any $x\in F$.}
\begin{proof} First note that, by \cite[Theorem 5.2.16]{FukuChen}, we have that $(\mathcal{E},\mathcal{D})$ is an irreducible Dirichlet form (see \cite[Section 2.1]{FukuChen} for a definition).
Since $(\mathcal{E},\mathcal{D})$ is recurrent, we can apply \cite[Theorem 3.5.6(ii)]{FukuChen} to deduce that $P_x(\tau_y<\infty)=1$, for all $x,y\in F$. Moreover, by \cite[Theorem 3.6.5]{MR}, we have that
$E_x(\int_0^\infty e^{-t}dL_t(x))=u(x,x)>0$, for all $x\in F$, where $(u(x,y))_{x,y\in F}$ is the potential density of $X$, as defined at (\ref{udef}). Combining these two observations, following the proof of \cite[Lemma 2.3]{Croyrogt} allows us to deduce that $\lim_{t\rightarrow\infty}\inf_{x\in F^{(r)}}L_t(x)=\infty$ for any $r\geq0$, $P_y$-a.s.\ for any $y\in F$. This readily yields the result.
\end{proof}

Note that the previous lemma implies that $\tau(t):=\inf\{s>0:A_s>t\}$ remains finite for all $t\geq 0$, and so confirms that $X_{\tau(t)}$ has an infinite lifetime. We are now in a position to complete the proof of Corollary \ref{maincor}.

\begin{proof}[Proof of Corollary \ref{maincor}] First, suppose we are in case (a); in particular, $\nu(F)$ has full support. Moreover, suppose that we have embedded all the objects of the discussion into a common metric space $(M,d_M)$ in the way described by Lemma \ref{embeddings}, and that the various processes are coupled so that $X^n\rightarrow X$ in $D(\mathbb{R}_+,M)$, and $A^n\rightarrow A$ in $D(\mathbb{R}_+,\mathbb{R}_+)$, almost-surely. As in Section \ref{ltconvsec}, denote the probability measure corresponding to the coupling by $P$. Now, note that, $P$-a.s., for any $t,\delta>0$ we have that $\int_{F}(L_{t+\delta}(x)-L_t(x))\mu(dx)=\delta>0$, and so, applying the continuity of local times, we can find an $\varepsilon>0$ such that $L_{t+\delta}(x)-L_t(x)\geq\varepsilon$ on a non-empty open set. Since $\nu(F)$ has full support, it readily follows that $(A_t)_{t\geq 0}$ is strictly increasing, $P$-a.s. Thus we can apply \cite[Theorem 7.2]{Whittpaper}, to deduce that $\tau^n\rightarrow \tau$ in $D(\mathbb{R}_+,\mathbb{R}_+)$, where the limiting function is strictly increasing and continuous, $P$-a.s. (Recall that $\tau^n$ is the right-continuous inverse of $A^n$, and $\tau$ is the right-continuous inverse of $A$.) Together with the convergence $X^n\rightarrow X$, this implies (see \cite[Theorem 3.1]{Whittpaper}) that $X^{n,\nu_n}\rightarrow X^\nu$ in $D(\mathbb{R}_+,M)$, $P$-a.s., which confirms the result.

The proof of part (b) is essentially the same, but involves different topologies. In particular, from $A^n\rightarrow A$ in $D(\mathbb{R}_+,\mathbb{R}_+)$, it is only possible in general to suppose $\tau^n\rightarrow \tau$ with respect to the Skorohod $M_1$ topology \cite[Theorem 7.1]{Whittpaper}. Given this convergence holds simultaneously with $X^n\rightarrow X$ in $D(\mathbb{R}_+,M)$, where $X$ is assumed to be continuous, we can apply the straightforward generalisation of \cite[Lemma A.6]{CroyMuir} to deduce the result.
\end{proof}

\section{Liouville Brownian motion}

Given an element in $(F,R,\mu,\rho)\in \mathbb{F}$, the associated Liouville Brownian motion is the process $X^\nu$, defined as at (\ref{xnudef}), where $\nu$ is the Liouville measure. To define this, let us first introduce the Gaussian free field on $F$, $(\gamma(x))_{x\in F}$ say, which we will suppose is pinned at $\rho$, and built on a probability space with probability measure $\mathbf{P}$ and expectation $\mathbf{E}$. In particular, we define $(\gamma(x))_{x\in F}$ to be a centred Gaussian field (i.e.\ $\mathbf{E}\gamma(x)=0$ for all $x\in F$), with covariances given by
\[{\rm{Cov}}(\gamma(x),\gamma(y))=g(x,y),\qquad \forall x,y\in F,\]
where $g(x,y)$ is the Green's function of $X$ killed on hitting $\rho$ (cf.\ the notation $g_{\{\rho\}}$ in the proof of Lemma \ref{ltcont}). Note that these assumptions imply that $\gamma(\rho)=0$, $\mathbf{P}$-a.s., and yield that an alternative way to characterise the covariances is via the formula
\[\mathbf{E}\left(\left(\gamma(x)-\gamma(y)\right)^2\right)=R(x,y),\qquad \forall x,y\in F.\]
(To deduce the latter identity, it is useful to observe that $2g(x,y)=R(\rho,x)+R(\rho,y)-R(x,y)$, see \cite[Theorem 4.3]{Kig}.) Thus we have from standard estimates for Gaussian random variables that
\begin{equation}\label{tailforgff}
\mathbf{P}\left(\left|\gamma(x)-\gamma(y)\right|\geq \varepsilon\right)\leq 2 e^{-\frac{\varepsilon^2}{2R(x,y)}},
\end{equation}
and substituting this for the estimate (\ref{tailforlt}), one can follow the proof of Lemma \ref{ltcont} to deduce that, if $(F,R,\mu,\rho)$ satisfies the volume doubling estimates of (\ref{uvdforlimit}), then $(\gamma(x))_{x\in F}$ is a continuous function, $\mathbf{P}$-a.s. (To check this continuity property, one might alternatively note that (\ref{uvdforlimit}) yields an estimate for the size of a $\varepsilon$-cover of $F^{(r)}$ of the form $c_1\varepsilon^{-c_2}$, and from this the result is an application of \cite[Theorem 8.6]{MRa}, for example.) In this case, for $\kappa>0$ fixed, setting (similarly to (\ref{2dlmeasure}))
\[\nu(dx)=e^{\kappa\gamma(x)-\frac{\kappa^2}{2}\mathbf{E}(\gamma(x)^2)}\mu(dx),\]
yields a locally finite, Borel regular measure on $(F,R)$ of full support, $\mathbf{P}$-a.s. (Note also that this choice of normalisation yields $\mathbf{E}\nu(dx)=\mu(dx)$.) Thus, for $\mathbf{P}$-a.e.\ realisation of $\nu$, we can define $X^\nu$ by the procedure at (\ref{xnudef}). Since under $P_x$ the starting point of $X^\nu$ is $x$, the corresponding \emph{quenched law} of $X^\nu$ started from $x\in F$ is well-defined; we will denote this by $P^\nu_x$. Moreover, we can define the \emph{annealed law} of the Liouville Brownian motion $X^\nu$ by integrating out the Liouville measure, i.e.\
\begin{equation}\label{anndef}
\mathbb{P}^{\rm LBM}_x\left(\cdot\right):=\int P_x^\nu\left(\cdot\right)\mathbf{P}(d\nu).
\end{equation}

The principal aim of this section is to show that if we have $(F_n,R_n,\mu_n,\rho_n)\rightarrow (F,R,\mu,\rho)$ in $\mathbb{F}$ and the UVD property holds (i.e.\ Assumption \ref{a1} is satisfied), then the associated Liouville measures and Liouville Brownian motions converge. To this end, we start by noting the equicontinuity of the Gaussian free fields in the sequence, which we will denote by $(\gamma_n(x))_{x\in F_n}$, $n\geq 1$. As for the continuity of $\gamma$, the proof of this result is identical to that of the local time equicontinuity result of Lemma \ref{ltcont}, with \eqref{tailforlt} replaced by \eqref{tailforgff}, and so is omitted.

{\lemma\label{gammanequi} If $(F_n,R_n,\mu_n,\rho_n)_{n\geq 1}$ is a sequence in $\mathbb{F}$ satisfying UVD, then, for each $\varepsilon>0$ and $r>0$,
\[\lim_{\delta\rightarrow0}\sup_{n\geq 1}\mathbf{P}\left(\sup_{\substack{y,z\in F^{(r)}_n:\\R_n(y,z)\leq\delta}}{\left|\gamma_n(y)-\gamma_n(z)\right|}\geq \varepsilon\right)=0.\]}
\bigskip

We can now deduce convergence of Liouville measures under Assumption \ref{a1}; the following result can be interpreted as a distributional version of Assumption \ref{a2}. We write $\nu_n$ for the Liouville measure associated with $(F_n,R_n,\mu_n,\rho_n)$.

{\lemma\label{nunconv} Suppose Assumption \ref{a1} holds, and that $(F_n,R_n,\mu_n,\rho_n)$, $n\geq 1$, and $(F,R,\mu,\rho)$ are isometrically embedded into a common (complete, separable, locally compact) metric space $(M,d_M)$ so that the conclusion of Lemma \ref{embeddings} holds. It is then the case that $\nu_n\rightarrow \nu$ in distribution with respect to the vague topology for locally finite Borel measures on $(M,d_M)$.}
\begin{proof} By \cite[Theorem 16.16]{Kall}, it will suffice to show that
\begin{equation}\label{fint}
\int_Mf(x)\nu_n(dx)\rightarrow\int_Mf(x)\nu(dx)
\end{equation}
in distribution, for any non-negative, continuous, compactly supported $f:M\rightarrow \mathbb{R}$. For each such $f$, we note that the support of $f$ is contained in ${B}_M(\rho,r/2)$ for some $r>0$ for which \eqref{embedconv} holds. Moreover, under the assumptions of the lemma, we have that $(f(x)e^{\kappa\gamma(x)-\frac{\kappa^2}{2}\mathbf{E}(\gamma(x)^2)})_{x\in F}$ is a continuous function on $F$, $\mathbf{P}$-a.s., and Lemma \ref{gammanequi} implies the equicontinuity of the functions $(f(x)e^{\kappa\gamma_n(x)-\frac{\kappa^2}{2}\mathbf{E}(\gamma_n(x)^2)})_{x\in F_n}$. Consequently, the result at \eqref{fint} can be proved in the same way as Proposition \ref{atn} if we can show the analogue of (\ref{ltconv1}) in this setting, i.e.\ if $(x_i^n)_{i=1}^k$ in $F_n$, $n\geq 1$, are such that $d_M(x_i^n,x_i)\rightarrow 0$ for some $(x_i)_{i=1}^k$ in $F$, then it holds that
\[\left(\gamma_n\left(x_i^n\right)\right)_{i=1,\dots, k}\rightarrow \left(\gamma\left(x_i\right)\right)_{i=1,\dots, k},\]
in distribution in $\mathbb{R}^k$. However, this is straightforward, since all the random variables above are centred Gaussian random variables, and
\[\left({\rm{Cov}}(\gamma_n(x_i^n),\gamma(x_j^n))\right)_{i,j=1,\dots,k}\rightarrow\left({\rm{Cov}}(\gamma(x_i),\gamma(x_j))\right)_{i,j=1,\dots,k},\]
where to deduce the latter convergence, it is again helpful to apply the identity
$2g(x,y)=R(\rho,x)+R(\rho,y)-R(x,y)$ from \cite[Theorem 4.3]{Kig}.
\end{proof}

From this convergence of Liouville measures, a simple adaptation of Corollary \ref{maincor} yields the convergence of Liouville Brownian motion in this setting. In particular, by the separability of the space of locally finite Borel measures on $(M,d_M)$ under the vague topology, we can suppose $\nu_n$ and $\nu$ are coupled so that the conclusion of Lemma \ref{nunconv} holds $\mathbf{P}$-a.s.\ (see \cite[Theorem 4.30]{Kall}, for example). Under this coupling, the proof of Corollary \ref{maincor} yields the almost-sure convergence of quenched laws, i.e.\ $P_{\rho_n}^{n,\nu_n}\rightarrow P_{\rho}^{\nu}$, weakly as probability measures on $D(\mathbb{R}_+,M)$, $\mathbf{P}$-a.s., where, for $\nu_n$ given, $P_{\rho_n}^{n,\nu_n}$ is the law of $X^{n,\nu_n}$ started from $\rho_n$. Integrating out the above result with respect to $\mathbf{P}$ then gives the following, where $\mathbb{P}^{{\rm LBM}_n}_{\rho_n}$ is the annealed law of $X^{n,\nu_n}$ started from $\rho_n$.

{\propn\label{lbmconv} Suppose Assumption \ref{a1} holds. It is then possible to isometrically embed $(F_n,R_n,\mu_n,\rho_n)$, $n\geq 1$, and $(F,R,\mu,\rho)$ into a common metric space $(M,d_M)$ so that
\[\mathbb{P}^{{\rm LBM}_n}_{\rho_n}\rightarrow\mathbb{P}^{{\rm LBM}}_{\rho}\]
weakly as probability measures on $D(\mathbb{R}_+,M)$.}

{\rem {\rm It is natural to ask for heat kernel estimates for the Liouville Brownian motion $X^\nu$. In the two-dimensional setting, this is a significant challenge (see \cite{KajAnd, MRVZ} for work in this direction). Here, however, estimating the quenched heat kernel of $X^\nu$ is straightforward. Indeed, as noted above, the measure estimates for $\mu$ at (\ref{uvdforlimit}) imply that the density $\frac{d\nu}{d\mu}(x)=e^{\kappa\gamma(x)-\frac{\kappa^2}{2}\mathbf{E}(\gamma(x)^2)}$ is $\mathbf{P}$-a.s.\ a continuous, strictly positive function, and so uniformly bounded away from 0 and $\infty$ on compact regions. Thus, uniformly over compact regions, $\nu$ satisfies the same measure estimates as $\mu$ (up to constants that depend on the particular region and realisation of $\nu$). In particular, this implies that, up to constants, the short-time on-diagonal quenched heat-kernel behaviour of $X^\nu$ will be the same as for the original process $X$. (See \cite{Kum} for particular heat kernel estimates that hold under uniform volume doubling.)}}

{\example\label{lbmexamples} {\rm As simple examples, one might consider graphical approximations to tree-like and low-dimensional fractal spaces. In the following, we briefly introduce some of these.

\noindent
(i) The most basic example would be to set $F_n$ to be the integer lattice $\mathbb{Z}$ equipped with the rescaled Euclidean distance $R_n(x,y)=n^{-1}|x-y|$, counting measure $\mu_n(A):=n^{-1}|A|$, and distinguished vertex $\rho_n=0$. Then Assumption \ref{a1} is satisfied with limit $(F,R,\mu,\rho)$ given by $F=\mathbb{R}$, $R$ the Euclidean metric, $\mu$ one-dimensional Lebesgue measure, and $\rho=0$.

\noindent
(ii) More generally than the previous example, one might consider a family of graph trees $(G_n)_{n\geq1}$ for which there exist scaling factors $(a_n)_{n\geq 1}$, $(b_n)_{n\geq1}$ such that $(G_n,a_nR_n,b_n\mu_n,\rho_n)_{n\geq1}$ satisfies Assumption \ref{a1} for some limiting tree $(F,R,\mu,\rho)\in \mathbb{F}$, where: $R_n$ is the resistance metric associated with unit resistances along edges of $G_n$; $\mu_n$ is the counting measure on $G_n$; and $\rho_n$ is a distinguished vertex. (Here and in the following, we call $G$ a graph tree if it is connected and contains no cycle.) In this setting the resistance metric is identical to the usual shortest path graph distance, for which the assumptions are generally easier to check. For example, it is elementary to check the result for the graphs approximating the Vicsek set, as shown in Figure \ref{vicsek}, with $a_n=3^{-n}$, $b_n=5^{-n}$.

\begin{figure}[ht]
\begin{center}
\scalebox{0.06}{\includegraphics{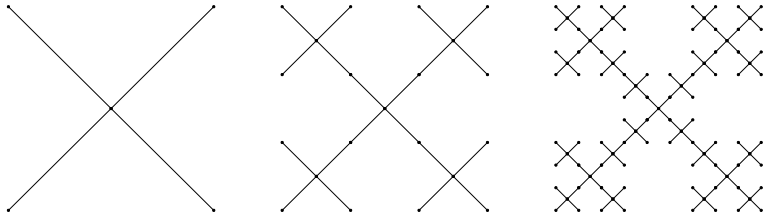}}
\end{center}
\vspace{-10pt}
\caption{The Vicsek set graphs $G_1$, $G_2$, $G_3$.}\label{vicsek}
\end{figure}

\noindent
(iii) It is known that the resistance metric on the graphs approximating nested fractals, again when unit resistors are placed along edges, can be rescaled to yield a resistance metric on the limiting fractal (this is an application of the homogenisation result of \cite[Theorem 3.8]{peir}, for example). In Section \ref{rcmfractalsec}, we introduce a more general class of fractals, so leave details until later (alternatively, see \cite{lind} for background on nested fractals). However, as an illustrative example, we note that the Assumption \ref{a1} applies to the sequence $(G_n,a_nR_n,b_n\mu_n,\rho_n)$, $n\geq 1$, where: $G_n$ is the $n$th level Sierpi\'nski gasket graph,
as introduced at the end of Section $1$ (see Figure \ref{sg}); $R_n$ is the associated resistance metric; $\mu_n$ is the counting measure on vertices; $\rho_n$ can be chosen arbitrarily as long as $(\rho_n)_{n\geq1}$ converges in $\mathbb{R}^2$; and the scaling factors are given by $a_n=(3/5)^n$, $b_n=3^{-n}$. Furthermore, we note that the results in this section also establish that
\[a_n^{1/2}\sup_{x\in G_n}\gamma_{G_n}(x)\rightarrow \sup_{x\in F}\gamma(x)\]
in distribution, where $\gamma_{G_n}$ is the Gaussian free field on $G_n$, and $\gamma$ the Gaussian free field  on the limiting fractal; this refines the result of \cite[Theorem 2.2]{KZ} for these examples.

\noindent
(iv) Finally, another fractal for which our present setting is appropriate is the two-dimensional Sierpi\'nski carpet and its graphical approximations, as shown in Figure \ref{sc}. (Again, the results would apply to other low-dimensional carpets.) Whilst the exact resistance scaling factor is not known in this case, previous results allow us to control the resistance in terms of the graph distance (cf.\ the comments in \cite[Section 5.4]{CroyLT}). It follows that there exist subsequences $(G_{n_i},a_{n_i}R_{n_i},b_{n_i}\mu_{n_i},\rho_{n_i})$ (where again $R_n$ is the resistance metric on $G_n$ with unit edge resistances, and $\mu_n$ is counting measure) that satisfy Assumption \ref{a1} with $a_n=\gamma^n$ for some $\gamma\in (0,1)$, and $b_n=8^{-n}$.

\begin{figure}[ht]
\begin{center}
\scalebox{0.06}{\includegraphics{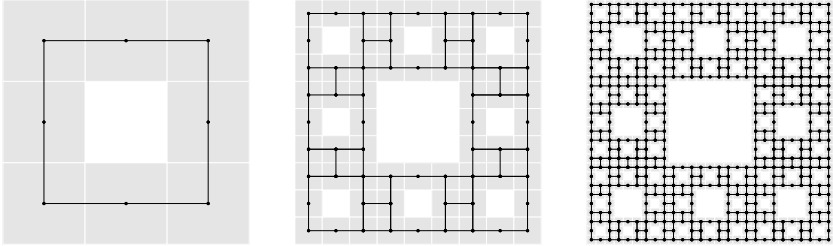}}
\end{center}
\vspace{-10pt}
\caption{The Sierpi\'nski carpet graphs $G_1$, $G_2$, $G_3$.}\label{sc}
\end{figure}}}

\section{Bouchaud trap model}\label{bouchsec}

We start this section by introducing the (symmetric) Bouchaud trap model (BTM) on a locally finite, connected graph $G=(V_G,E_G)$. To do this, we first introduce a trapping landscape $\xi=(\xi_x)_{x\in V_G}$, which is a collection of independent and identically distributed strictly-positive random variables built on a probability space with probability measure $\mathbf{P}$. Conditional on $\xi$, the dynamics of the BTM are then given by a continuous-time $V_G$-valued Markov chain $X^\xi=(X^\xi_t)_{t\geq 0}$ with jump rate from $x$ to $y$ given by $1/\xi_x$ if $\{x,y\}\in E_G$, and jump rate 0 otherwise. The quenched law of $X^\xi$ started from $x$ (i.e.\ the law given $\xi$) will be denoted $P_x^\xi$, and the corresponding annealed law, obtained by integrating out $\xi$, by $\mathbb{P}^{\rm BTM}_x$ (cf.\ (\ref{anndef})).

To put the BTM into the framework of this article, we note that it can be obtained as a time-change of the continuous time simple random walk on $G$. In particular, let $R_G$ be the resistance metric on $V_G$ obtained by placing unit resistors along edges, and $\mu_G$ be the counting measure on $V_G$ (i.e.\ $\mu_G(\{x\})=1$ for all $x\in V_G$). As in Section \ref{rfsec}, we can naturally associate a process $X$ with the triple $(V_G,R_G,\mu_G)$, and it is an elementary exercise to check that this is the continuous-time $V_G$-valued Markov chain with unit jump rate along edges. Moreover, by time-changing $X$ as at (\ref{xnudef}) according to the measure $\nu_G$ defined by setting $\nu_G(\{x\})=\xi_x$, we obtain $X^\xi$.

Similarly to the previous section, our goal is to present scaling limits of the BTM for sequences of recurrent graphs which, when equipped with resistance metrics and counting measure, satisfy UVD and converge in the Gromov-Hausdorff-vague topology to a limit in $\mathbb{F}$. We will do this in the case when the trapping environment is heavy-tailed. More specifically, in this section we make the following assumption.

{\assu \label{btmassu} Suppose $(G_n)_{n\geq 1}$ is a sequence of locally finite, connected graphs with vertex sets $V_n$, resistance metrics $R_n$ (as above, here we assume that individual edges have unit resistance), counting measures $\mu_n$, and distinguished vertices $\rho_n$. Suppose further that each graph $G_n$ is recurrent, so that $(V_n,R_n,\mu_n,\rho_n)\in \mathbb{F}$. Moreover, assume that there exist scaling factors $(a_n)_{n\geq 1}$, $(b_n)_{n\geq 1}$ such that $(V_n,a_nR_n,b_n\mu_n,\rho_n)_{n\geq 1}$ satisfy Assumption \ref{a1}, where the measure $\mu$ of the limiting space $(F,R,\mu,\rho)\in \mathbb{F}$ is non-atomic. Finally, we suppose that each $G_n$ is equipped with a trapping landscape $\xi^n=(\xi^n_x)_{x\in V_n}$ such that
\begin{equation}\label{alphatail}
\mathbf{P}\left(\xi^n_x>u\right)\sim u^{-\alpha}
\end{equation}
for some fixed $\alpha\in(0,1)$, where $f (u)\sim g (u)$ means $\lim_{u\to\infty}f(u)/g(u)=1$.}

{\rem {\rm It would be straightforward to replace (\ref{alphatail}) with the assumption that the random variables $\xi^n_x$ are in the domain of attraction of a stable law with index $\alpha$, but we do not do so here for reasons of brevity.}}
\bigskip

We next describe the limits of the trapping landscape and BTM under the above assumption. We will show that the former is given by the natural generalisation to (\ref{onedfinmeasure}) obtained by setting
\[\nu(dx):=\sum_{i}v_i\delta_{x_i}(dx),\]
where $(v_i,x_i)$ are the points of a Poisson process on $(0,\infty)\times F$ with intensity $\alpha v^{-1-\alpha}dv\mu(dx)$, and $\delta_{x}$ is the probability measure on $F$ that places all its mass at $x$; note that this is a locally finite, Borel regular measure on $(F,R)$ of full support, $\mathbf{P}$-a.s.\ (where we suppose $\nu$ is also built on the probability space with probability measure $\mathbf{P}$). Moreover, the latter is given by $X^{\nu}$, that is, the time-change of the process $X$ naturally associated with $(F,R,\mu)$ by the measure $\nu$. Reflecting the terminology for the corresponding one-dimensional object, we will call this the $\alpha$-FIN process on $(F,R,\mu)$.
In general, this is not a diffusion, but under our assumptions it will be whenever $X$ is, and in this case we will call it the $\alpha$-FIN diffusion. Given $\nu$, the quenched law of $X^\nu$ started from $x$ will be denoted by $P^{\nu}_x$, and the associated annealed law $\mathbb{P}^{\rm FIN}_x$.

The following lemma establishes convergence of the trapping landscapes. We write $\nu_n$ for the measure on $V_n$ induced by the trapping landscape $\xi^n$.

{\lemma\label{nunconv2} Suppose Assumption \ref{btmassu} holds, and $(V_n,a_nR_n,b_n\mu_n,\rho_n)$, $n\geq 1$, and $(F,R,\mu,\rho)$ are isometrically embedded into a common (complete, separable, locally compact) metric space $(M,d_M)$ so that the conclusion of Lemma \ref{embeddings} holds. It is then the case that $b_n^{1/\alpha}\nu_n\rightarrow \nu$ in distribution with respect to the vague topology for locally finite Borel measures on $(M,d_M)$.}
\begin{proof} By \cite[Theorem 16.16]{Kall} (and the fact that measures of disjoint sets are independent under both $\nu_n$ and $\nu$), it will suffice to show that $b_n^{1/\alpha}\nu_n(B)\rightarrow \nu(B)$ in distribution, for every relatively compact set $B\subseteq M$ such that $B$ is a continuity set for $\nu$, $\mathbf{P}$-a.s. Since $\nu(B)=0$, $\mathbf{P}$-a.s., if and only if $\mu(B)=0$, the latter requirement is equivalent to supposing $B$ is a continuity set for $\mu$. For such a $B$, we have by assumption that $b_n\mu_n(B)\rightarrow \mu(B)$. Hence we have by an elementary computation that
\begin{equation}\label{laplaceconv}
\mathbf{E}\left(e^{-\lambda b_n^{1/\alpha}\nu_n(B)}\right)=\mathbf{E}\left(e^{-\lambda b_n^{1/\alpha}\sum_{x\in B}\xi^n_x}\right)=\left(1-\lambda^\alpha b_n \Gamma(1-\alpha)+o(b_n)\right)^{\mu_n(B)}\rightarrow e^{-\lambda^\alpha \Gamma(1-\alpha)\mu(B)}.
\end{equation}
Moreover, it is a simple application of Campbell's theorem \cite[(3.17)]{Kingman} that
\[\mathbf{E}\left
(e^{-\lambda\nu(B)}\right)
=\mathbf{E}\left(e^{-\sum_{i:x_i\in B}\lambda v_i}\right)=e^{-\int_{(0,\infty)\times B}\left(1-e^{-\lambda v}\right)\alpha v^{-1-\alpha}dv\mu(dx)}=e^{-\lambda^\alpha\Gamma(1-\alpha)\mu(B)},
\]
and so we are done.
\end{proof}

In light of Lemma \ref{nunconv2}, and incorporating the scaling factors where appropriate, we can proceed exactly as for the proof of Proposition \ref{lbmconv} to deduce convergence of the rescaled BTMs. We write $\mathbb{P}^{{\rm BTM}_n}_x$ for the annealed law of the BTM $X^{n,\xi^n}$ on $G_n$.

{\propn\label{btmresult} Suppose Assumption \ref{btmassu} holds. It is then possible to isometrically embed $(V_n,a_nR_n,b_n\mu_n,\rho_n)$, $n\geq 1$, and $(F,R,\mu,\rho)$ into a common metric space $(M,d_M)$ so that
\[\mathbb{P}^{{\rm BTM}_n}_{\rho_n}\left(\left(X^{n,\xi^n}_{t/a_nb_n^{1/\alpha}}\right)_{t\geq 0}\in\cdot\right)
\rightarrow\mathbb{P}^{{\rm FIN}}_{\rho}\left(\left(X^{\nu}_{t}\right)_{t\geq 0}\in\cdot\right)\]
weakly as probability measures on $D(\mathbb{R}_+,M)$.}
\bigskip

{\example\label{SG55ex} {\rm As applications of Proposition \ref{btmresult}, we can consider the same spaces as discussed in Example \ref{lbmexamples}. For instance, for the BTM on the graph approximations to the Sierpi\'nski gasket $(G_n)_{n\geq1}$ of Example \ref{lbmexamples}(iii), we have the convergence of the annealed law of $(X^{n,\xi^n}_{3^{n/\alpha}(5/3)^nt})_{t\geq 0}$ to the annealed law of the $\alpha$-FIN diffusion on the Sierpi\'nski gasket.}}

\section{Random conductance model}\label{rcmsec}

We now recall from the introduction the random conductance model; this is defined similarly to the BTM, but with random weights now assigned to the edges rather than to the vertices. As in the previous section, let $G=(V_G,E_G)$ be a locally finite, connected graph. Let $\omega=(\omega_e)_{e\in E_G}$ be a collection of independent and identically distributed strictly-positive random variables built on a probability space with probability measure $\mathbf{P}$; these are the so-called \emph{random conductances}. (Actually, for our model of self-similar fractals, we will allow some local dependence.) Conditional on $\omega$, we define the \emph{variable speed random walk (VSRW)} $X^\omega=(X^\omega_t)_{t\geq 0}$ to be the continuous-time $V_G$-valued Markov chain with jump rate from $x$ to $y$ given by $\omega_{xy}$ if $\{x,y\}\in E_G$, and jump rate 0 otherwise. We obtain the associated \emph{constant speed random walk (CSRW)} $X^{\omega,\nu}=(X^{\omega,\nu}_t)_{t\geq 0}$ by setting the jump rate along edge $x$ to $y$ to be $\omega_{xy}/\nu(\{x\})$, where
\[\nu\left(\{x\}\right):=\sum_{e\in E_G:\:x\in e}\omega_e;\]
note that this is the time-change of $X^\omega$ according to the measure $\nu$, and has unit mean holding times at each vertex.

An important observation is that the VSRW and CSRW experience different trapping behaviour on edges of large conductance. In particular, if we have an edge of conductance $\omega_e\gg1$ (surrounded by other edges of conductance close to 1), then both the VSRW and CSRW cross the edge order $\omega_e$ times before escaping. However, each crossing only takes the VSRW a time of $1/\omega_e$, meaning that it is only trapped for a time of order 1, whereas each crossing for the CSRW takes a time of order 1, and so the latter process is trapped for a total time of order $\omega_e$. In particular, when the weights are bounded below, we might typically expect the VSRW associated with the conductances $\omega$ to behave like the VSRW on the unweighted graph, which in each of the examples we consider converges under scaling to Brownian motion on the limiting space. Moreover, we might expect the CSRW to behave like the Bouchaud trap model with trapping environment described by $\nu$, and therefore we expect to see FIN-type scaling limits for this process when the conductances are heavy-tailed.

The aim of this section is to make the heuristics of the previous paragraph rigourous, in the sense that we will show for the random conductance model on certain sequences of graphs that, if the weights are chosen to satisfy (similarly to \eqref{alphatail})
\begin{equation}\label{omegatail}
\mathbf{P}\left(\omega_e>u\right)\sim u^{-\alpha}
\end{equation}
for some fixed $\alpha\in(0,1)$, then the rescaled VSRW $X^\omega$ converges to the canonical Brownian motion on the limit space, and the rescaled CSRW $X^{\omega,\nu}$ converges to the $\alpha$-FIN diffusion. The two classes we discuss are graph trees, and a family of self-similar fractals.

\subsection{Random conductance model on trees}\label{rcmtreesec}

In this section, we will study the scaling limit of the VSRW and CSRW for the random conductance model on sequences of graph trees; our main result is Proposition \ref{rcmtreeresult}. As for the Bouchaud trap model, we will need to show that the associated time-change measures converge. The additional part of the argument will be to check that we also have homogenisation of the resistance metric when random conductances are placed along edges. In this setting, this is straightforward, since we can apply the law of large numbers along paths. We start by stating the main assumption of this section, which closely matches Assumption \ref{btmassu}. The restriction to compact spaces is only for convenience of presentation, and not essential.

{\assu \label{rcmtreeassu} Suppose $(T_n)_{n\geq 1}$ is a sequence of finite graph trees with vertex sets $V_n$, edge sets $E_n$, resistance metrics $R_n$ (here we assume that individual edges have unit resistance), counting measures $\mu_n$, and distinguished vertices $\rho_n$. In particular, $(V_n,R_n,\mu_n,\rho_n)\in \mathbb{F}_c$. Moreover, assume that there exist scaling factors $(a_n)_{n\geq 1}$, $(b_n)_{n\geq 1}$ such that $\sum_{n\geq 1}a_n^2<\infty$ and $(V_n,a_nR_n,b_n\mu_n,\rho_n)_{n\geq 1}$ satisfy Assumption \ref{a1}, where the limit space $(F,R,\mu,\rho)$ is in $\mathbb{F}_c$, and the measure $\mu$ is non-atomic. Finally, we suppose that each $T_n$ is equipped with random conductances $\omega^n=(\omega^n_e)_{e\in E_n}$ such that (\ref{omegatail}) holds.}
\bigskip

We start by considering the resistance metrics on the weighted graph trees. In particular, given the conductances $\omega^n$, we define $R_n^\omega$ to be the associated resistance metric on $V_n$. In the following lemma, we show that, for large $n$, these random metrics are uniformly close to a scaled copy of $R_n$. The scaling factor is given by $\varrho:=\mathbf{E}\omega_e^{-1}$.

{\lemma \label{resconv} Suppose Assumption \ref{rcmtreeassu} holds. It is then the case that, $\mathbf{P}$-a.s.,
\[\sup_{x,y\in V_n}a_n\left|R_n^\omega(x,y)-\varrho R_n(x,y)\right|\rightarrow 0.\]}
\begin{proof} Suppose $(V_n,a_nR_n)$ and $(F,R)$ are embedded into the same space $(M,d_M)$ such that \eqref{hconv} holds. Define $(x_i^n)_{i,n\geq1}$ and $(x_i)_{i\geq1}$ as in Section \ref{compsec}, so that $a_nR_n(x_i^n,x_j^n)\rightarrow R(x_i,x_j)$, for all $i,j\geq1$. Since $R_n^\omega(x_i^n,x_j^n)$ is the sum of $(\omega^n_e)^{-1}$ along the $R_n(x_i^n,x_j^n)$ edges in the path from $x_i^n$ to $x_j^n$, we obtain from (a fourth moment version of) the strong law of large numbers that, $\mathbf{P}$-a.s., ${R^\omega_n(x_i^n,x_j^n)}/{R_n(x_i^n,x_j^n)}\rightarrow \varrho$, for every $i,j\geq1$ (it is for this that the assumption $\sum_{n\geq 1}a_n^2<\infty$ is needed). In particular, the combination of the two previous observations implies that,  $\mathbf{P}$-a.s.,
\[\sup_{i,j\leq k}a_n\left|R_n^\omega(x_i^n,x_j^n))-\varrho R_n(x_i^n,x_j^n))\right|\rightarrow 0,\qquad \forall k\geq1.\]
Since the resistances of unit edges satisfy $(\omega^n_e)^{-1}\leq 1$, we also have that $R_n^{\omega}\leq R_n$. It thus follows that,  $\mathbf{P}$-a.s.,
\begin{eqnarray*}
\lefteqn{\limsup_{n\rightarrow\infty}\sup_{x,y\in V_n}a_n\left|R_n^\omega(x,y)-\varrho R_n(x,y)\right|}\\
&\leq&
\limsup_{n\rightarrow\infty}\left\{\sup_{i,j\leq k}a_n\left|R_n^\omega(x_i^n,x_j^n)-\varrho R_n(x_i^n,x_j^n)\right|+2\sup_{x\in V_n}\inf_{i\leq k}a_n R_n(x,x_i^n)\right\}\\
&\leq&2\varepsilon_k,
\end{eqnarray*}
where $\varepsilon_k$ is defined as in Section \ref{compsec}. In particular, since $\varepsilon_k\rightarrow0$ as $k\rightarrow\infty$, the result follows.
\end{proof}

Similarly to Lemma \ref{nunconv2}, we next check convergence of the measures $\nu_n$, where we define $\nu_n(\{x\})=\sum_{e\in E_n:\:x\in e}\omega_e^n$ for $x\in V_n$. The limiting measure $\nu$ is the FIN measure on $F$, defined as in the previous section.

{\lemma\label{nunconv3} Suppose Assumption \ref{rcmtreeassu} holds, and $(V_n,a_nR_n,b_n\mu_n,\rho_n)$, $n\geq 1$, and $(F,R,\mu,\rho)$ are isometrically embedded into a common (complete, separable, locally compact) metric space $(M,d_M)$ so that the conclusion of Lemma \ref{embeddings} holds. It is then the case that $2^{-1}b_n^{1/\alpha}\nu_n\rightarrow \nu$ in distribution with respect to the vague topology for locally finite Borel measures on $(M,d_M)$.}
\begin{proof} We first note that, if $\tilde{\mu}_{n}$ is a measure on $V_n$ defined by setting $\tilde{\mu}_{n}(\{x\})={\rm deg}_n(x)$, i.e.\ the usual graph degree of $x$ in $T_n$, then it is an elementary exercise to check that $d_{T_n}^P(\tilde{\mu}_{n},2{\mu}_{n})\leq2$, where $d_{T_n}^P$ is the Prohorov metric for measures on $T_n$. In particular, it follows that $b_n\tilde{\mu}_{n}\rightarrow 2\mu$ weakly as measures on $M$.

We next show that, for all $x\in M$, $r>0$ such that $B_M(x,r)$ is a continuity set for $\mu$,
\begin{equation}\label{ballmeasconv}
2^{-1}b_n^{1/\alpha}\nu_n\left(B_M(x,r)\right)\rightarrow \nu\left(B_M(x,r)\right)
\end{equation}
in distribution. Writing $B=B_M(x,r)$, we have that
\[\nu_n\left(B\right)=2\sum_{e\in E_n: e\subseteq B}\omega_e+\sum_{e\in E_n:\: e\cap B\neq \emptyset,\:e\not\subseteq B}\omega_e.\]
If we denote by $E_n^1(B)$ and $E_n^2(B)$ the subsets over which the two sums are taken, respectively, then we claim that
\begin{equation}\label{enlimits}
b_n\left|E_n^1(B)\right|\rightarrow \mu(B),\qquad b_n\left|E_n^2(B)\right|\rightarrow 0.
\end{equation}
Indeed, for the second limit, we note that the edges in $E_n^2(B)$ each connect to a distinct vertex in the annulus $\bar{B}_M(x,r+a_n)\backslash B$. It follows that, for any $\varepsilon>0$,
\[\limsup_{n\rightarrow\infty}b_n\left|E_n^2(B)\right|\leq \limsup_{n\rightarrow\infty} b_n\mu_n\left(\bar{B}_M(x,r+a_n)\backslash B\right)\leq \mu\left(\bar{B}_M(x,r+\varepsilon)\backslash B\right).\]
Since $B$ is a continuity set for $\mu$, the right-hand side can be made arbitrarily small by adjusting $\varepsilon$ as appropriate, which confirms the desired result. Given this, the first limit at (\ref{enlimits}) is a simple consequence of the identity $\tilde{\mu}_n(B)=2|E_n^1(B)|+|E_n^1(B)|$, and the conclusion of the first paragraph. Thus, exactly as for (\ref{laplaceconv}), we have that $\mathbf{E}(e^{-\lambda 2^{-1} b_n^{1/\alpha}\nu_n(B)})\rightarrow e^{-\lambda^\alpha \Gamma(1-\alpha)\mu(B)}$, which establishes (\ref{ballmeasconv}).

With the same techniques, it is straightforward to extend (\ref{ballmeasconv}) to the result that
\[\left(2^{-1}b_n^{1/\alpha}\nu_n\left(B_i\right)\right)_{i=1}^k\rightarrow \left(\nu\left(B_i\right)\right)_{i=1}^k\]
in distribution, where each set $B_i$ is a finite unions of balls that are continuity sets for $\mu$. In particular, since the collection of such sets forms a separating class (see \cite[p.\ 317]{Kall}), this implies the result (see \cite[Theorem 16.16 and Exercise 16.11]{Kall}).
\end{proof}

From Lemmas \ref{resconv} and \ref{nunconv3}, we are able to prove the main result of this section. We write $P^{{\rm VSRW}_n}_x$ for the quenched law of the VSRW $X^{n,\omega}$ on the tree $T_n$ with conductances $\omega^n$, started from $x$. We write $\mathbb{P}^{{\rm CSRW}_n}_{x}$ for the annealed law of the corresponding CSRW $X^{n,\omega,\nu}$. We write $P_x$ for the law of the Brownian motion on $(F,R,\mu)$ started from $x$, and $\mathbb{P}^{{\rm FIN}}_{\rho}$ is the annealed law of the associated $\alpha$-FIN diffusion, defined as in Section \ref{bouchsec}.

{\propn\label{rcmtreeresult} Suppose Assumption \ref{rcmtreeassu} holds. It is then possible to isometrically embed $(V_n,a_nR_n,b_n\mu_n,\rho_n)$, $n\geq 1$, and $(F,R,\mu,\rho)$ into a common metric space $(M,d_M)$ so that, $\mathbf{P}$-a.s.,
\begin{equation}\label{vsrwresult}
P^{{\rm VSRW}_n}_{\rho_n}\left(\left(X^{n,\omega}_{\varrho t/ a_nb_n}\right)_{t\geq 0}\in\cdot\right)\rightarrow P_\rho\left(\left(X_{t}\right)_{t\geq 0}\in\cdot\right)
\end{equation}
weakly as probability measures on $D(\mathbb{R}_+,M)$. Moreover,
\[\mathbb{P}^{{\rm CSRW}_n}_{\rho_n}\left(\left(X^{n,\omega,\nu}_{2\varrho t/ a_nb_n^{1/\alpha}}\right)_{t\geq 0}\in\cdot\right)
\rightarrow\mathbb{P}^{{\rm FIN}}_{\rho}\left(\left(X^{\nu}_{t}\right)_{t\geq 0}\in\cdot\right)\]
weakly as probability measures on $D(\mathbb{R}_+,M)$.}

\begin{proof} The proof is essentially the same as that of Theorem \ref{main1} and Corollary \ref{maincor}, but with care needed as the random metric $R_n^\omega$ is different to the metric $R_n$ used for the embedding. (We suppose throughout that the embeddings of $(V_n,a_nR_n,b_n\mu_n,\rho_n)$, $n\geq 1$, and $(F,R,\mu,\rho)$ into $(M,d_M)$ satisfy the conclusion of Lemma \ref{embeddings}.)

We first note that, since $R_n^\omega\leq R_n$ and Lemma \ref{resconv} holds, we have from the UVD assumption for the underlying space that, $\mathbf{P}$-a.s.,
\[b_n\mu_n\left(B_{n}^\omega(x,r)\right)\geq c_1v(r),\qquad\forall x\in V_n,\:r\in[r^\omega_0(n),r^\omega_\infty(n)+1],\]
and, for every $\varepsilon>0$,
\[b_n\mu_n\left(B_{n}^\omega(x,r)\right)\leq c_2v(r),\qquad\forall x\in V_n,\:r\in[\max\{r^\omega_0(n),a_n^{-1}\varepsilon\},r^\omega_\infty(n)+1],\]
where distances are defined with respect to the metric $R_n^\omega$ (note the truncation at $a_n^{-1}\varepsilon$ in the upper bound). These bounds are enough to repeat the proof of Lemma \ref{ltcont} (cf.\ the weaker version of UVD in \cite{CroyLT}) to deduce the equicontinuity of the rescaled local times $(a_nL^{n,\omega}_{\varrho t/a_nb_n}(x))_{x\in V_n}$ of the VSRW $X^{n,\omega}$ with respect to the distance $a_nR_n^\omega$, and, by Lemma \ref{resconv} again, the equicontinuity of these local times with respect to $a_nR_n$. We also claim that the above volume bounds yield that in place of (\ref{etest}) we have, for $\delta,\tilde{\varepsilon}>0$,
\begin{equation}\label{exptailest}
\limsup_{n\rightarrow\infty}\sup_{x\in V_n}P^{{\rm VSRW}_n}_{x}
\left(\sup_{0\leq t\leq \delta}a_nR_n\left(x,X^{n,\omega}_{\varrho t/ a_nb_n}\right)>\tilde{\varepsilon}\right)\leq c_1 e^{-\frac{c_2 \tilde{\varepsilon}}{v^{-1}(\delta/\tilde{\varepsilon})}},\qquad \mathbf{P}\mbox{-a.s.}
\end{equation}
Checking this requires only a minor adaptation of results from \cite{Kum}. Indeed, writing $\tau^{n,\omega}(x,r):=\inf\{t>0:\:a_nR_n(x,X^{n,\omega}_{\varrho t/ a_nb_n})> r\}$ and $h(r) = rv(r)$, the proof of \cite[Proposition 4.2]{Kum} gives the existence of constants $c_1, c_2$ such that
\[E^{{\rm VSRW}_n}_{x}\left(\tau^{n,\omega}(y,r)\right)\leq c_1h(r), \qquad E^{{\rm VSRW}_n}_{x}\left(\tau^{n,\omega}(x,r)\right)\geq c_2h(r),\]
for all $x,y\in V_n$, $r\in [\max\{a_nr^\omega_0(n),\varepsilon\},a_n(r^\omega_\infty(n)+1)]$ and $n\geq 1$, and from this it readily follows that
\[P^{{\rm VSRW}_n}_{x}\left(\tau^{n,\omega}(x,r)\leq s\right)\leq 1-c_3+\frac{c_4s}{h(r)}\]
for every $x\in V_n$, $r\in [\max\{a_nr^\omega_0(n),\varepsilon\},a_n(r^\omega_\infty(n)+1)]$, $s\geq 0$ and $n\geq 1$, cf.\ proof of \cite[Lemma 4.2]{Kum}. To obtain the exponential estimate of \eqref{exptailest}, we then follow the chaining argument of \cite[Lemma 4.2]{Kum}. This requires us to apply the previous exit time tail estimate for radii no smaller than $c_5v^{-1}(\tilde{\varepsilon}/\delta)$ (with respect to the metric $a_nR_n$). Noting that $a_nr^\omega_0(n)\rightarrow 0$, $\mathbf{P}$-a.s., one can thus adjust $\varepsilon$ so that the relevant estimates hold for large $n$.

Applying the conclusions of the previous paragraph, the proof of Proposition \ref{compactcase} can be followed exactly to yield the result at (\ref{vsrwresult}). Moreover, since we have local time equicontinuity and the distributional convergence of time-change measures given by Lemma \ref{nunconv3}, we also obtain the convergence of local times as at (\ref{ltconv1}), and the convergence of the CSRW $X^{n,\omega,\nu}$ under the annealed measure (cf.\ the proof of Proposition \ref{lbmconv} again).
\end{proof}

{\example\label{exa6-5} {\rm As a first application of Proposition \ref{rcmtreeresult}, one might consider the random conductance model on the Vicsek set example of Example \ref{lbmexamples}(ii). For this, we obtain the quenched convergence of the VSRW,
\[\left(X^{n,\omega}_{\varrho 15^nt}\right)_{t\geq 0}\rightarrow \left(X_t\right)_{t\geq0},\]
where $X$ is the Brownian motion on the Vicsek set, and also the annealed convergence of the CSRW,
\[\left(X^{n,\omega,\nu}_{2\varrho5^{n/\alpha}3^n t}\right)_{t\geq 0}\rightarrow\left(X^{\nu}_t\right)_{t\geq 0},\]
where $X^{\nu}$ is the $\alpha$-FIN diffusion on the Vicsek set.}}

\subsection{Random conductance model on self-similar fractals}\label{rcmfractalsec}

In this section, we study the random conductance model on a class of self-similar fractals, extending the homogenisation results of
\cite{Kum2,kk} greatly. After introducing the model in Section \ref{ufrsec}, we then go on to study the renormalisation and homogenisation of associated discrete Dirichlet forms in Sections \ref{renormsec} and \ref{homogsec}, respectively, and derive our main scaling results in Section \ref{concsec}.

\subsubsection{Uniform finitely ramified graphs}\label{ufrsec}

For $\beta>1$ and $I=\{1,2,\cdots, N\}$, let $(\Psi_i)_{i\in I}$ be a family of contraction maps on ${\br}^d$ such that $\Psi_i{\bf x}=\beta^{-1}U_i{\bf x} +\gamma_i,~{\bf x}\in {\br}^d$, where $U_i$ is a unitary map and $\gamma_i\in {\br}^d$. Assume that $(\Psi_i)_{i\in I}$ satisfies the open set condition, i.e., there is a non-empty, bounded open set $W$ such that $(\Psi_i (W))_{i\in I}$ are disjoint and $\cup_{i\in I} \Psi_i (W)\subset W$.
As $(\Psi_i)_{i\in I}$ is a family of contraction maps,
there exists a unique non-void compact set $F$ such that ${F}
=\cup_{i\in I}\Psi_i ({F})$. We assume $F$ is connected.

Let $Fix$ be the set of fixed points of the maps $\Psi_{i}$, $i\in I$. A point $x \in Fix$ is called an {\it essential fixed point} if there exist $i,j \in I,~i \ne j$ and $y\in Fix$ such that $\Psi_{i}(x)=\Psi_{j}(y)$. Let $I_{Fix}:=\{i\in I:$ the fixed point of $\Psi_i$ is an essential fixed point$\}$. We write ${V}_0$ for the set of essential fixed points. Denote $\Psi_{i_{1},\dots,i_{n}}=\Psi_{i_{1}}\circ\dots\circ\Psi_{i_{n}}$. We further assume a finitely ramified property, i.e., if $\{i_{1},\dots,i_{n}\}, \{j_{1},\dots,j_{n}\}$ are distinct sequences, then
\[\Psi_{i_{1},\dots,i_{n}}({ F}) \bigcap\Psi_{j_{1},\dots,j_{n}}
({F})=\Psi_{i_{1},\dots,i_{n}}({V}_0)\bigcap\Psi_{j_{1},
\dots,j_{n}}({ V}_0);\]
note that, for each $n\ge 0$ and $i_1,\cdots,i_{n}\in I$, we call a set of the form $\Psi_{i_1,\cdots, i_n}({ V}_0)$ an $n$-cell. A compact \emph{uniform finitely ramified (u.f.r.)\ fractal} ${F}$ is a set determined by $(\Psi_i)_{i\in I}$ satisfying  the above assumptions with $|{V}_0|\ge 2$. Throughout, we assume without loss of generality that $\Psi_1({\bf x})=\beta^{-1}{\bf x}$ and ${\bf 0}$ belongs to ${V}_0$. We observe that u.f.r.\ fractals, first introduced in \cite{HK}, form a class of fractals which is wider than nested fractals (\cite{lind}), and is included in the class of p.c.f. self-similar sets (\cite{kig1}). In particular, the Sierpi\'nski gasket is an example of a u.f.r.\ fractal.

We next introduce the sequence of u.f.r.\ graphs approximating $F$. In particular, let
\[{V}_n=\cup_{i_1,\cdots,i_n\in I}\Psi_{i_{1},\dots,i_{n}}({V}_0),\]
noting that ${F}$ is the closure of $\cup_{n=0}^\infty{V}_n$. Moreover, denote by $E_n$ the collection of pairs of distinct points $x,y\in V_n$ such that $x$ and $y$ are in the same $n$-cell, and let $\mu_n$ be the counting measure on $V_n$ (placing mass one on each vertex). We will be interested in the scaling behaviour of $(V_n,R_n^\omega,\mu_n,\rho_n)$ (where $\rho_n$ is some distinguished vertex) and the associated VSRW and CSRW when $R_n^\omega$ is the resistance metric determined by placing random conductances along edges in $E_n$; in this section we generalise slightly from the i.i.d.\ conductance assumption to allow dependencies within the same $n$-cell.

For some of our results, it will be convenient to work in terms of the unbounded u.f.r.\ fractal and graphs; we define these now. We call ${\hat F}:=\cup_{n=1}^{\infty} \beta^n {F}$ an unbounded uniform finitely ramified fractal. Let ${\hat V}={\hat V}_0=\cup_{n=0}^{\infty}\beta^{n} {V}_n$, and ${\hat V}_{n}=\beta^{-n} {\hat V}$ for $n \in \bz$. We define $n$-cells for $n\in \bz$ as in the compact case, and denote by ${\hat E}_n$ the edges of the unbounded graph, connecting vertices within the same $n$-cell.

Finally for this section, we introduce some useful index spaces. In particular, let
\begin{eqnarray*}
\Xi &=&\{\eta \in I^{\bz}: \mbox { there exists $n\in \bz$ such that }
\eta_k=1, k \ge n\},\\
\Xi_+ &=&\{\eta \in I^{\bn}: \mbox { there exists $n\in \bn$ such that }
\eta_k=1, k \ge n\}.\end{eqnarray*}
There is then a continuous map $\pi: \Xi\to \br^D$ such that
$$\pi(\eta)= \lim_{n\to \infty} \beta^n
\Psi_{\eta_n}(\Psi_{\eta_{n-1}}(\cdots(\Psi_{\eta_{-n}}
({\bf 0}))\cdots)).$$
It is easy to see ${\hat F}=\pi (\Xi)$.
For any $\eta\in \Xi
_+$ and $i\in I_{Fix}$, define $[\eta,i]\in \Xi$ as follows;
\[[\eta,i](k)=\left\{\begin{array}{rl}\eta_k,&~~k\ge 1\\i,&~~k\le
0.\end{array}\right.\]
Then, ${\hat V}=\{\pi([\eta,i]): \eta \in \Xi_+, i\in I_{Fix}\}.$

\subsubsection{Renormalisation of forms}\label{renormsec}

In this section, we introduce notation and basic properties for Dirichlet forms and associated renormalisation maps on uniform finitely ramified graphs. To begin with, let $\cq$ be the set of $Q=(Q_{ij})_{i,j\in I_{Fix}}$
such that
\[Q_{ij}=Q_{ji},~~\forall i,j\in I_{Fix},\qquad\sum_{j\in I_{Fix}}
Q_{ij}=0, ~~\forall i\in I_{Fix}.\]
Observe that $\cq$ is a vector space, with an inner product
$(\cdot,\cdot)_{\cq}$ given by
\[(Q,Q')_{\cq}=\sum_{j,k\in I_{Fix}} Q_{jk}Q'_{jk}=\mbox {Trace}~ Q{}^tQ',\qquad
Q,Q'\in \cq.\]
Let $\cq_+=\{Q\in \cq: {S}_Q(\xi,\xi)\ge 0$ for any $\xi \in l(I_{Fix})\}$, where
\[{S}_Q(\xi,\xi)=-\sum_{i,j\in I_{Fix}}Q_{ij}\xi_i \xi_j=\frac 12 \sum_{i,j\in I_{Fix}}Q_{ij}(\xi_i-\xi_j)^2,\]
and we define $l(A)=\{f:A\to \br\}$ for a set $A$. Set
\[\|Q\|^2=\sup_{\xi\in l(I_{Fix})}\frac{{S}_Q(\xi,\xi)}{\sum_{i\in I_{Fix}}\xi_i^2}.\]
Note that $c_{1}\|Q\|^2\le (Q,Q)_{\cq}\le c_{2}\|Q\|^2$ for all $Q\in \cq_+$. Let
\begin{eqnarray*}
\cq_M&:=&\{Q\in \cq: Q_{ij}\ge 0, \forall i,j\in I_{Fix}, i \ne j\},\\
\mbox{Int} (\cq_M)&:=&\{Q\in \cq: Q_{ij}> 0, \forall i,j\in I_{Fix}, i \ne j\},\\
\cq_{irr}&:=&\{Q\in \cq_M: {S}_Q(\xi,\xi)=0 \Leftrightarrow \xi \mbox{ is
constant}\}.
\end{eqnarray*}
Note that $\mbox{Int} (\cq_M)\subset \cq_{irr} \subset \cq_M \subset \cq_+$. Take $Q_*\in\mbox{Int} (\cq_M)$, and let
\begin{eqnarray*}
\cx_+:=C(\Xi_+,\cq_+),~~
\cx_M:=C(\Xi_+,\cq_M),~~\cx_{irr}:= C(\Xi_+,\cq_{irr}).
\end{eqnarray*}
Then $\cx_+$ and $\cx_M$ are convex cones. For any $\theta\in \cx_+$, define ${\hat S}_\theta$ by
\[{\hat S}_\theta(u,u)=\frac 12\sum_{\eta\in\Xi_+}{S}_{\theta(\eta)}(u(\pi([\eta,\cdot])), u(\pi([\eta,\cdot]))),~~u\in \Lte,\]
where ${\hat \mu}_0$ is the counting measure on ${\hat V}$. If $\theta\in \cx_M$, then ${\hat S}_\theta$ is a Dirichlet form on $\Lte$. So, there is an associated Markov process $((X^\theta_t)_{t\geq 0}, (P^\theta_x)_{x\in \hat{V}})$. We introduce an order relation $\le$ in $\cx_+$ as follows:
\[\theta\le {\theta'} ~\mbox { if }~ {\hat S}_\theta(u,u) \le {\hat S}_{\theta'} (u,u) ~\mbox { for all }~ u \in \Lte.\]
The norm on $\cx_+$ is given by $\|\theta\|^2=\sup_{u\in \Lte}{\hat S}_\theta(u,u)/\|u\|_{\Lte}^2$.

We now define the renormalisation map $\bar{\Phi}$.
 For any $\theta\in \cx_+$, let ${\hat S}_\theta^{(1)}:\Lte\to [0,\infty)$ be given by
\[{\hat S}_\theta^{(1)}(u)=\inf \{{\hat S}_\theta(v,v):v\in \Lte,~
v(\beta x)=u(x),~x\in {\hat V}\}.\]
By the self-similarity of ${\hat F}$, there is a renormalisation map ${\bar \Phi}:\cx_+ \to \cx_+$ defined by setting ${\hat S}_\theta^{(1)}(u)={\hat S}_{{\bar \Phi}(\theta)} (u,u)$ for all $\theta\in \cx_+$ and $u \in \Lte$. Let $\iota: \cq_+\to \cx_+$ be such that $\iota (Q)(\eta)=Q$ for all $\eta\in\Xi_+$ and $Q\in \cq_+$. Define a renormalisation map ${\tilde \Phi}:\cq_+\to\cq_+$ as ${\tilde \Phi}(Q)={\bar \Phi}(\iota(Q))(\eta)$ for $\eta\in \Xi_+$. Note that it is independent of the choice of $\eta\in\Xi_+$. By Schauder's fixed point theorem, we know that there exists $Q_*\in \cq_M$ (with $(Q_*)_{ij}>0$ for some $i\ne j$) and $\varrho_{Q_*}>0$ such that ${\tilde \Phi}(Q_*)=\varrho_{Q_*}^{-1} Q_*$. Henceforth, we assume the following.

\begin{assu}\label{thm:ass1}
(1) For each $Q\in \cq_{irr}$, there exists $n_0=n_0(Q)\in \bn$ such that ${\tilde \Phi}^n(Q)\in \mbox{Int} (\cq_M)$ for all $n\ge n_0$.\\
(2) There exists $Q_0\in \mbox{Int} (\cq_M)$ and $\varrho_{Q_0}>0$ such that ${\tilde \Phi}(Q_0)=\varrho_{Q_0}^{-1} Q_0$.
\end{assu}

In the following, we take one $Q_0\in \mbox{Int} (\cq_M)$, as given by Assumption \ref{thm:ass1}(2), and fix it.

\begin{rem}{\rm
(1) It is known that $\varrho_{Q_0}>0$ is uniquely determined, i.e.\ if $Q_1,Q_2\in \cq_{irr}$ satisfy ${\tilde \Phi}(Q_j)=\varrho_{Q_j}^{-1}  Q_j$  ($j=1,2$) with $\varrho_{Q_1}, \varrho_{Q_2}>0$, then $\varrho_{Q_1}=\varrho_{Q_2}=\varrho_{Q_0}$. In the class of fractal graphs we consider, we can prove $\varrho_{Q_0}>1$ (see \cite{kig1}, for example).\\
(2) Every nested fractal satisfies Assumption \ref{thm:ass1}.}
\end{rem}

Under Assumption \ref{thm:ass1}, we set $\Phi=\varrho_{Q_0} {\bar \Phi}: \cx_+\to\cx_+$, $\hat\Phi=\varrho_{Q_0} {\tilde \Phi}$ and ${\hat S}_\theta^{\Phi}(u)=\varrho_{Q_0}{\hat S}_\theta^{\bar \Phi}(u)$ for $u \in \Lte$.

\subsubsection{Homogenisation of forms}\label{homogsec}

In this subsection, we will describe the homogenisation of the discrete Dirichlet forms associated with the random conductance model on u.f.r.\ fractals, see Theorem \ref{thm:conth} for the main result. First, we give some further definitions for later use. Let ${V}_0=\{a_i: i\in I_{Fix}\}$. For $Q^*\in \mbox{Int} (\cq_M)$, $k\in I$, we define a matrix $A_{k,Q^*}\in l(I_{Fix}^2)$ by setting
\[(A_{k,Q^*})_{ij}=P^{Q^*}_{\Psi_k(a_i)}\big(X^1_{\tau_{{ V}_0}}=a_j\big),\]
where $X^1$ is a discrete time Markov chain on ${V}_1$ whose transition probabilities are determined by the Dirichlet form obtained by placing a copy of $Q^*$ on each $1$-cell, and $\tau_{{V}_0}=\inf\{n\ge 0: X^1_n\in {V}_0\}$. Then, it is easy to see that the following holds for u.f.r.\ graphs; $0<(A_{k,Q^*})_{ij}<1$ if $k\ne i$ and $(A_{k,Q^*})_{kj}=\delta_{kj}$.

We now define a liberalisation of the renormalisation map around the fixed point $Q_*$.
For any $\theta\in \cx_+$ and $Q_*\in \mbox{Int} (\cq_M)$ with ${\hat \Phi}(Q_*)=Q_*$, define
\[{\hat S}_\theta^{(2)}(u)=\varrho_{Q_0}{\hat S}_\theta(v,v)\qquad\mbox { for }~~~u \in \Lte,\]
where $v\in \Lte$ satisfies $v(\beta x)=u(x),~x\in {\hat V}$, and $v$ is $Q_*$-harmonic on ${\hat V}\setminus \beta {\hat V}$, i.e.,
\[v(\pi([\eta\cdot i,j]))=
\sum_{k\in I_{Fix}} (A_{i,Q_*})_{jk}u(\pi([\eta,k]))\qquad\mbox { for }
~~~i\in I,j\in I_{Fix}.\]
Here $\eta\cdot i\in \Xi_+$ is given by $(\eta\cdot i)_n=\eta_{n-1},~n\ge2$  and $(\eta\cdot i)_1=i$.
 It is easy to see that ${\hat S}_\theta^{(2)}(u)={\hat S}_{{H_{Q_*}}(\theta)}(u,u)$ for all $\theta\in \cx_+$ and $u \in \Lte$, where
\[
{H_{Q_*}}(\theta)(\eta)=\varrho_{Q_0}\sum_{k\in I}
{}^tA_{k,Q_*} \theta(\eta\cdot k)A_{k,Q_*}.
\]
Similarly, we define a linear map $\hat H_{Q_*}: \cq_+\to \cq_+$ by
${\hat H_{Q_*}}(Q)=\varrho_{Q_0}\sum_{k\in I}
{}^tA_{k,Q_*} QA_{k,Q_*}.$
Note that $\hat H_{Q_*}(Q_*)=Q_*$. The following properties of $\Phi$ and $H_{Q_*}$ are easy, but important. Note that the corresponding results hold for $\hf$.

\begin{lemma} Let $Q_*\in \mbox{Int} (\cq_M)$
satisfy ${\hat \Phi}(Q_*)=Q_*$ and $\theta,{\theta'}\in \cx_+$.\\
(1) If $\theta\le {\theta'}$, then $\Phi(\theta)\le \Phi({\theta'})$, $H_{Q_*}(\theta)\le
H_{Q_*}({\theta'})$ and $\Phi(\theta)\le H_{Q_*}(\theta)$.\\
(2) For $a,b\ge 0$, $\Phi(a\theta+b{\theta'})\ge a\Phi(\theta)+b\Phi({\theta'})$ and
$H_{Q_*}(a\theta+b{\theta'})= aH_{Q_*}(\theta)+bH_{Q_*}({\theta'})$.
\end{lemma}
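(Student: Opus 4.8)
The plan is to reduce every assertion to the level of the quadratic forms $\hat S_\bullet$, using three elementary observations. First, for each fixed $u\in\Lte$ the map $\theta\mapsto\hat S_\theta(u,u)$ is linear, since $S_Q(\xi,\xi)$ is linear in $Q$ and $\hat S_\theta$ is the $\eta$-sum of such terms; thus $\hat S_{a\theta+b\theta'}(u,u)=a\hat S_\theta(u,u)+b\hat S_{\theta'}(u,u)$, and ``$\theta_1\le\theta_2$'' means precisely ``$\hat S_{\theta_1}(u,u)\le\hat S_{\theta_2}(u,u)$ for all $u\in\Lte$''. Second, by the defining relations $\hat S^{(1)}_\theta(u)=\hat S_{\bar\Phi(\theta)}(u,u)$ and $\Phi=\varrho_{Q_0}\bar\Phi$, we have $\hat S_{\Phi(\theta)}(u,u)=\varrho_{Q_0}\,\hat S^{(1)}_\theta(u)=\varrho_{Q_0}\inf\{\hat S_\theta(v,v):v\in\Lte,\ v(\beta x)=u(x)\ \forall x\in\hat V\}$. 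Third, by the defining relation $\hat S^{(2)}_\theta(u)=\hat S_{H_{Q_*}(\theta)}(u,u)$, we have $\hat S_{H_{Q_*}(\theta)}(u,u)=\varrho_{Q_0}\,\hat S_\theta(v_u,v_u)$, where $v_u$ is the $Q_*$-harmonic extension of $u$; the structural points I will use are that $v_u$ depends only on $u$ and $Q_*$, not on $\theta$, and that $v_u$ is itself an admissible competitor in the infimum of the second observation (indeed $v_u(\beta x)=u(x)$ for all $x\in\hat V$ by construction).

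With these in hand, part (1) is immediate. If $\theta\le\theta'$ then $\hat S_\theta(v,v)\le\hat S_{\theta'}(v,v)$ for every $v\in\Lte$; restricting to the common admissible set $\{v:v(\beta x)=u(x)\}$, taking infima and multiplying by $\varrho_{Q_0}$ gives $\hat S_{\Phi(\theta)}(u,u)\le\hat S_{\Phi(\theta')}(u,u)$ for all $u$, i.e.\ $\Phi(\theta)\le\Phi(\theta')$. Likewise, since $v_u$ is the same for $\theta$ and $\theta'$, the inequality $\hat S_\theta(v_u,v_u)\le\hat S_{\theta'}(v_u,v_u)$ yields $\hat S_{H_{Q_*}(\theta)}(u,u)\le\hat S_{H_{Q_*}(\theta')}(u,u)$, i.e.\ $H_{Q_*}(\theta)\le H_{Q_*}(\theta')$. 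Finally, for $\Phi(\theta)\le H_{Q_*}(\theta)$ one notes that $v_u$ is one of the $v$'s over which the infimum defining $\hat S^{(1)}_\theta(u)$ is taken, so $\hat S^{(1)}_\theta(u)\le\hat S_\theta(v_u,v_u)$, and multiplying by $\varrho_{Q_0}$ gives $\hat S_{\Phi(\theta)}(u,u)\le\hat S_{H_{Q_*}(\theta)}(u,u)$ for every $u$.

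For part (2), the equality for $H_{Q_*}$ follows at once from the explicit formula $H_{Q_*}(\theta)(\eta)=\varrho_{Q_0}\sum_{k\in I}{}^tA_{k,Q_*}\,\theta(\eta\cdot k)\,A_{k,Q_*}$, which is $\mathbb{R}$-linear in $\theta$ (equivalently, from the first and third observations, $\hat S_\theta(v_u,v_u)$ being linear in $\theta$ with $v_u$ fixed). For $\Phi$, the first observation gives, for every admissible $v$ and $a,b\ge0$, $\hat S_{a\theta+b\theta'}(v,v)=a\hat S_\theta(v,v)+b\hat S_{\theta'}(v,v)\ge a\hat S^{(1)}_\theta(u)+b\hat S^{(1)}_{\theta'}(u)$; taking the infimum over $v$ (using $\inf(f+g)\ge\inf f+\inf g$) and multiplying by $\varrho_{Q_0}$ yields $\hat S_{\Phi(a\theta+b\theta')}(u,u)\ge a\hat S_{\Phi(\theta)}(u,u)+b\hat S_{\Phi(\theta')}(u,u)=\hat S_{a\Phi(\theta)+b\Phi(\theta')}(u,u)$ for all $u$, i.e.\ $\Phi(a\theta+b\theta')\ge a\Phi(\theta)+b\Phi(\theta')$. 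The assertions for $\hat\Phi$ and $\hat H_{Q_*}$ are obtained by the same arguments at the single-cell level; equivalently, by specialising to constant maps $\theta=\iota(Q)$, $\theta'=\iota(Q')$, for which $\Phi(\iota(Q))=\iota(\hat\Phi(Q))$ and $H_{Q_*}(\iota(Q))=\iota(\hat H_{Q_*}(Q))$, and using that the relevant orderings and (super)linearities pass through $\iota$.

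I do not expect a genuine obstacle here: the whole statement is a formal consequence of the variational descriptions of $\Phi$ and $H_{Q_*}$ together with the trivial facts that taking an infimum over a fixed set is monotone and superadditive. The only points requiring a moment's care are bookkeeping ones — that ``monotone/linear in $\theta$'' must always be read with $u$ (respectively $v$) held fixed, since $\hat S_\theta$ is quadratic in its argument but only linear in $\theta$ — and the standing fact, used in the third observation, that the $Q_*$-harmonic extension $v_u$, given at each vertex by $v_u(\pi([\eta\cdot i,j]))=\sum_{k\in I_{Fix}}(A_{i,Q_*})_{jk}u(\pi([\eta,k]))$, genuinely lies in $\Lte$; the boundary condition $v_u(\beta x)=u(x)$ holds by construction (using $(A_{k,Q_*})_{kj}=\delta_{kj}$), and square-summability holds because at each vertex $v_u$ is a convex combination of finitely many nearby values of $u$ (the rows of $A_{i,Q_*}$ being hitting distributions), so $\|v_u\|_{\Lte}$ is controlled by $\|u\|_{\Lte}$. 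Once this is noted, I would present the rest as a direct unwinding of the definitions.
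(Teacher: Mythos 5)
The paper states this lemma without proof, merely remarking that the properties are ``easy, but important,'' so there is no proof in the source to compare against. Your argument is the intended one: every claim reduces to (i) linearity of $\theta\mapsto\hat S_\theta(u,u)$ for fixed $u$, (ii) the variational identity $\hat S_{\Phi(\theta)}(u,u)=\varrho_{Q_0}\inf\{\hat S_\theta(v,v):v(\beta x)=u(x)\}$, and (iii) the identity $\hat S_{H_{Q_*}(\theta)}(u,u)=\varrho_{Q_0}\hat S_\theta(v_u,v_u)$ with $v_u$ the $Q_*$-harmonic extension independent of $\theta$ and itself an admissible competitor in (ii). Monotonicity of $\Phi$ and $H_{Q_*}$, the comparison $\Phi\le H_{Q_*}$, linearity of $H_{Q_*}$, and superadditivity of $\Phi$ then follow formally from monotonicity and superadditivity of the infimum; your side remarks about $v_u\in\Lte$ (controlled by the rows of $A_{i,Q_*}$ being sub-probability vectors and finite ramification) and about transporting the statements to $\hat\Phi$, $\hat H_{Q_*}$ via the embedding $\iota$ (using $\Phi\circ\iota=\iota\circ\hat\Phi$) are exactly the right bookkeeping. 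The proof is correct and complete.
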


We are now ready to introduce a probability measure $\mathbf{P}$ on $\Theta_M$ to describe our random conductance model in this setting. In particular, we now write $\theta$ for a $\Theta_M$-valued random variable, and suppose that, under $\mathbf{P}$, the elements $(\theta(\eta))_{\eta\in \Xi_+}$ are independently identically distributed
$\cq_M$-valued random variables such that
$C_1 Q_0\le \theta(\eta)$ for $\eta\in \Xi_+$. Note that in \cite{Kum2,kk} it was assumed that $\mathbf{P}(\{\theta\in \cx_M:  C_1 Q_0\le \theta(\eta) \le C_2 Q_0,$ for $\eta\in \Xi_+\})=1$ for some $C_1,C_2>0$. Here we do not assume
such a uniform ellipticity condition from above. We note the following further property of $\Phi$:
\begin{equation}\label{basic10}
\mathbf{E}(\Phi(\theta))\le \Phi(\mathbf{E}(\theta)),
\end{equation}
where the expectation is taken for each element of the matrix in $\cq_M$.

Let $\Phi^n$ be the $n$-th iteration of $\Phi$. We make the following further assumption, which is possible to verify in the case of nested fractals when the distribution of the individual conductances does not have too heavy a tail at infinity.

\begin{assu}\label{thm:ass2} There exists $n_0\in \bn$ such that
\[\mathbf{E}\left[(\Phi^{n_0}(\theta)(\eta)_{ij})^2\right]<\infty,\qquad \forall i, j\in I_{Fix},\:\eta\in\Xi_+.\]
\end{assu}

Note that under Assumption $\ref{thm:ass2}$ we have, for all $i\ne j\in I_{Fix}$, $\eta\in\Xi_+$,
\begin{equation*}
\mathbf{E}[\Phi^{n_0}(\theta)(\eta)_{ij}]
\le (\mathbf{E}[(\Phi^{n_0}(\theta)(\eta)_{ij})^2])^{1/2}<\infty,
\end{equation*}
so by (\ref{basic10}), $\mathbf{E}[\Phi^{n}(\theta)(\eta)_{ij}]<\infty$ for all $i\ne j\in I_{Fix}$, $\eta\in\Xi_+$, $n\ge n_0$. We next give a sufficient condition for Assumption $\ref{thm:ass2}$ to hold. For $x,y\in V_n$, define
\[h_n(x,y)=\min\{k: K_1,\cdots K_k \mbox{ are $n$-cells}, x\in K_1,y\in K_k, K_i\cap K_{i+1}\ne \emptyset, \forall i=1,\dots,k-1\}.\]

\begin{propn} Suppose that there exists $n_0\in \bn$ such that $\min_{x,y\in V_0, x\ne y}h_{n_0}(x,y)\ge 2$. Suppose also that the law of
$\theta(\eta)_{ij}$ has at most polynomial decay at infinity
for all $i\ne j\in I_{Fix}$, $\eta\in\Xi_+$, namely there exists $c_1,\gamma_{ij}>0$ such that $\mathbf{P} (\theta(\eta)_{ij}\ge s)\le c_1s^{-\gamma_{ij}}$. Then Assumption~\ref{thm:ass2} holds. In particular, Assumption~\ref{thm:ass2} holds for nested fractal graphs if the law of the random conductances has at most polynomial decay at infinity.
\end{propn}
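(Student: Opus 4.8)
The plan is to identify $\Phi^{n}(\theta)(\eta)_{ij}$, for $i\neq j\in I_{Fix}$, with a deterministic multiple of an effective conductance in a random resistor network built from the cell-conductance matrices $\theta(\cdot)$, and then to use the separation hypothesis to show that the two relevant vertices are so many cells apart that this effective conductance has a finite second moment. The diagonal entries are controlled automatically via $\Phi^{n}(\theta)(\eta)_{ii}=-\sum_{j\neq i}\Phi^{n}(\theta)(\eta)_{ij}$, and it suffices to exhibit one $n_1$ that works for all $i,j\in I_{Fix}$ and $\eta\in\Xi_+$.

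\emph{Step 1 (a geometric estimate).} First I would show that $\min_{x,y\in V_0,\,x\neq y}h_{n_0}(x,y)\geq 2$ implies $h_{kn_0}(x,y)\to\infty$ as $k\to\infty$, uniformly over distinct $x,y\in V_0$. Indeed, for $k\geq2$ a shortest chain of $kn_0$-cells joining $x$ to $y$, coarsened to $(k-1)n_0$-cells, meets at least $h_{(k-1)n_0}(x,y)$ distinct coarse cells; within the coarse cell $\Psi_w(V_0)$ ($|w|=(k-1)n_0$) containing $x$, the chain runs from $x=\Psi_w(a_p)$ to a distinct boundary vertex $\Psi_w(a_q)$ and hence uses at least $h_{n_0}(a_p,a_q)\geq 2$ of the finer cells. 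Since the finer cells are partitioned by the coarse cell containing them, comparing the two counts gives $h_{kn_0}(x,y)\geq h_{(k-1)n_0}(x,y)+1$, and iterating yields divergence. Setting $\gamma:=\min_{i\neq j}\gamma_{ij}>0$, I then fix $k$, writing $n_1:=kn_0$, large enough that $h_{n_1}(x,y)\geq L$ for all distinct $x,y\in V_0$, where $L>2/\gamma$.

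\emph{Step 2 (renormalisation as effective conductance, and Nash--Williams).} Using the definition of $\bar{\Phi}$ through the trace $\hat{S}^{(1)}_\theta$, finite ramification, and the fact (Lemma~\ref{goodtrace}) that the trace of a resistance form on a finite set is the resistance form of the corresponding effective-resistance network, one has $\Phi^{n_1}(\theta)(\eta)_{ij}=\varrho_{Q_0}^{n_1}\,C^{\mathrm{eff}}(a_i,a_j)$, where $C^{\mathrm{eff}}(a_i,a_j)$ is the effective conductance between the $i$th and $j$th essential fixed points in the finite network on (a copy of) $V_{n_1}$ whose $n_1$-cells carry the i.i.d.\ conductance matrices $\theta(\cdot)$; these conductances are bounded below since $C_1Q_0\leq\theta(\cdot)$. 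As $h_{n_1}(a_i,a_j)\geq L$, there exist $L$ pairwise disjoint edge cut-sets $\Pi_1,\dots,\Pi_L$ separating $a_i$ from $a_j$, chosen so that the edges of $\Pi_\ell$ lie in a family of $n_1$-cells disjoint from those meeting $\Pi_{\ell'}$ for $\ell\neq\ell'$. Hence the quantities $W_\ell:=\sum_{e\in\Pi_\ell}c_e$ are independent, each bounded below by a positive constant and satisfying $\mathbf{P}(W_\ell\geq t)\leq ct^{-\gamma}$ for $t\geq1$. The Nash--Williams inequality gives $R^{\mathrm{eff}}(a_i,a_j)\geq\sum_{\ell=1}^{L}W_\ell^{-1}$, so $C^{\mathrm{eff}}(a_i,a_j)\leq\big(\sum_{\ell=1}^{L}W_\ell^{-1}\big)^{-1}$.

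\emph{Step 3 (moment bound and conclusion).} Since $\sum_{\ell}W_\ell^{-1}<1/t$ forces $W_\ell>t$ for every $\ell$, independence gives $\mathbf{P}\big((\sum_\ell W_\ell^{-1})^{-1}>t\big)\leq\prod_{\ell=1}^{L}\mathbf{P}(W_\ell>t)\leq (ct^{-\gamma})^{L}$ for $t\geq1$; as $L\gamma>2$ this yields $\mathbf{E}\big[(\sum_\ell W_\ell^{-1})^{-2}\big]<\infty$, whence $\mathbf{E}[(\Phi^{n_1}(\theta)(\eta)_{ij})^2]\leq\varrho_{Q_0}^{2n_1}\,\mathbf{E}[C^{\mathrm{eff}}(a_i,a_j)^2]<\infty$, uniformly in $\eta$ by stationarity. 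This is Assumption~\ref{thm:ass2} with $n_0$ replaced by $n_1$. For the final assertion, one checks directly from the nested-fractal axioms (cf.\ \cite{lind}) that any two distinct essential fixed points lie in distinct $1$-cells needing at least two cells to be connected, so the separation hypothesis holds with $n_0=1$. I expect the main obstacle to be Step~2: matching $\Phi^{n_1}$ precisely with the effective-conductance network via finite ramification, and — the genuinely delicate point — selecting the concentric cut-sets $\Pi_\ell$ so that the conductances entering distinct $W_\ell$ come from disjoint collections of $n_1$-cells, which is exactly what makes the $W_\ell$ independent; the divergence in Step~1 and the moment computation in Step~3 are comparatively routine.
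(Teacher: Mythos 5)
Your proposal follows essentially the same route as the paper: establish that $h_{kn_0}$ diverges, bound $\Phi^{kn_0}(\theta)(\eta)_{ij}$ by the effective conductance in a layered (shorted) network, and then use independence of the layer conductances together with the polynomial tail to get a finite second moment. The paper's explicit construction of the cut-sets via the sets $H_m=\{z:h_{ln_0}(a_i,\beta^{-ln_0}z)=m\}$ and Rayleigh shorting is exactly what you need for the delicate point you flag in Step~2 (a cell can span only two consecutive $H_m$-levels, so it contributes to at most one $W_\ell$); and the paper's iterated parallel/series tail estimates \eqref{eq:biebi11}--\eqref{eq:biebi22} produce the same bound as your Nash--Williams-plus-independence computation. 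The one small slip is the claimed equality $\Phi^{n_1}(\theta)(\eta)_{ij}=\varrho_{Q_0}^{n_1}C^{\mathrm{eff}}(a_i,a_j)$: the off-diagonal trace entry is in general strictly smaller than the two-point effective conductance, and the correct relation (as the paper states, by Rayleigh monotonicity) is $\Phi^{n_1}(\theta)(\eta)_{ij}\le \varrho_{Q_0}^{n_1}C^{\mathrm{eff}}(a_i,a_j)$, which is the direction you use anyway. Your geometric estimate $h_{kn_0}\ge h_{(k-1)n_0}+1$ is weaker than the paper's $h_{ln_0}\ge 2^l$, but divergence is all that is needed, so this does not affect the argument.
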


\begin{proof} First, suppose we have two edges with conductance $\omega_1$, $\omega_2$ such that $\mathbf{P}(\omega_i\ge s)\le c_is^{-\gamma_i}$ for $i=1,2$. If the edges are connected in parallel, then the effective conductance is $\omega_1+\omega_2$, which satisfies
\begin{equation}\label{eq:biebi11}
\mathbf{P}(\omega_1+\omega_2\ge s)\le \mathbf{P}(\omega_1\ge s/2)+\mathbf{P}(\omega_2\ge s/2)\le 2(c_1\vee c_2)s^{-\gamma_1\wedge \gamma_2},\qquad\forall s\ge 1.
\end{equation}
Similarly, connect the two conductances in series, and assume that $\omega_1$ and $\omega_2$ are independent. Then the effective conductance is $(\omega_1^{-1}+\omega_2^{-1})^{-1}$, and we have
\begin{eqnarray}
\mathbf{P}((\omega_1^{-1}+\omega_2^{-1})^{-1}\ge s)
&=&\mathbf{P}(\omega_1^{-1}+\omega_2^{-1}\le s^{-1})\nonumber\\
&\le &\mathbf{P}(\omega_1^{-1}\le s^{-1})\mathbf{P}(\omega_2^{-1}\le s^{-1})\nonumber\\
&\le &c_1c_2s^{-\gamma_1- \gamma_2},
\qquad\forall s\ge 1.\label{eq:biebi22}
\end{eqnarray}
Next, note that by the assumption we have $\min_{x,y\in V_0, x\ne y}h_{ln_0}(x,y)\ge 2^l$
for all $l\ge 1$. Let $a_i\in  V_0$ be the fixed point of $\Psi_i$. Consider the network
on $\beta^{ln_0}V_{ln_0}$ and fix $a_i\ne a_j\in V_0$. Define $H_m=\{z\in \beta^{ln_0}V_{ln_0}: h_{ln_0}(a_i,\beta^{-ln_0}z)=m\}$ for $1\le m \le h_{ln_0}(a_i,a_j)-1$, and $H_{h_{ln_0}(a_i,a_j)}=\{z\in \beta^{ln_0}V_{ln_0}: h_{ln_0}(a_i,\beta^{-ln_0}z)\ge h_{ln_0}(a_i,a_j)\}$. Now short all the vertices that are in the same $H_m$ for $1\le m \le h_{ln_0}(a_i,a_j)$, and let $C_{ij}$ be the effective conductance between $a_i$ and $a_j$ for the induced network. By Rayleigh's monotonicity principle for electric networks, we see that $\Phi^{ln_0}(\theta)(\eta)_{ij}\le C_{ij}$, where $\eta=(1,1,1,\dots)$. Applying \eqref{eq:biebi11} and \eqref{eq:biebi22} repeatedly, we see that $C_{ij}^2$ is integrable when $l$ is large enough.  Therefore Assumption $\ref{thm:ass2}$ holds in this case. Finally, note that the condition $\min_{x,y\in \hat V_0, x\ne y}h_{n_0}(x,y)\ge 2$ holds for nested fractal graphs due to \cite[Lemma $(2.8)$]{kus}, so the last assertion holds.
\end{proof}

We are now ready to state the main result of this section.

\begin{thm}\label{thm:conth} Under Assumptions $\ref{thm:ass1}$ and $\ref{thm:ass2}$, there exists $Q_{\mathbf{P}}
\in\mbox{Int}(\cq_{M})$ such that, for all $\eta\in \Xi_+$,
\begin{equation}\label{eq:daiji}
Q_{\mathbf{P}}=\lim_{n\to\infty}\Phi^n(\theta)(\eta),\qquad
\mbox{in }{L}^1(\cq_M,\mathbf{P}).
\end{equation}\end{thm}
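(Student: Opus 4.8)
The plan is to combine the deterministic renormalisation theory that is available under Assumption~\ref{thm:ass1} with a second--moment (self--averaging) argument for which Assumption~\ref{thm:ass2} is the essential input. Fix $\eta\in\Xi_+$ and write $\theta_n:=\Phi^n(\theta)(\eta)\in\cq_M$. Two structural facts will be used throughout. First, the semigroup identity $\Phi^{n+m}=\Phi^m\circ\Phi^n$ lets us view $\theta_{n+m}=\Phi^m(\Phi^n(\theta))(\eta)$ as the level--$m$ renormalisation applied to the field $\Phi^n(\theta)$. Second, by the self--similar (finitely ramified) structure, $\Phi^n(\theta)(\eta)$ is a fixed functional of the $N^n$ conductances $(\theta(\eta\cdot j))_{j\in I^n}$, the same for every $\eta$; since distinct $\eta$ give disjoint families of descendants and the $(\theta(\eta'))_{\eta'\in\Xi_+}$ are i.i.d., the variables $(\Phi^n(\theta)(\eta))_{\eta\in\Xi_+}$ are themselves i.i.d. In particular $q_n:=\mathbf{E}[\Phi^n(\theta)(\eta)]$ is independent of $\eta$, and it suffices to produce a deterministic $Q_{\mathbf P}\in\mbox{Int}(\cq_M)$ with $\mathbf{E}\|\Phi^n(\theta)(\eta)-Q_{\mathbf P}\|\to0$ as $n\to\infty$.

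Next I would record deterministic two--sided bounds on $\theta_n$. Since $\Phi$ is positively homogeneous and monotone (by the monotonicity and concavity properties recorded in Section~\ref{homogsec}) and $\hf(Q_0)=Q_0$, we have $\Phi^n(\iota(cQ_0))(\eta)=\hf^n(cQ_0)=cQ_0$ for every $c>0$; combined with the standing hypothesis $\theta(\eta)\ge C_1Q_0$ (so $\theta\ge\iota(C_1Q_0)$) and monotonicity of $\Phi^n$, this gives $\theta_n\ge C_1Q_0$ for all $n$. Taking $Q_*=Q_0$ in the domination $\Phi(\cdot)\le H_{Q_0}(\cdot)$ and iterating (both $\Phi$ and $H_{Q_0}$ being monotone) yields $\theta_n\le H_{Q_0}^n(\theta)(\eta)$, a sum of $N^n$ independent terms. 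By Assumption~\ref{thm:ass2} and the i.i.d./semigroup structure above, we may assume from now on that $\mathbf{E}[\theta(\eta)_{ij}^2]<\infty$: simply replace $\theta$ by $\Phi^{n_0}(\theta)$, which is again an i.i.d.\ $\Theta_M$--valued field bounded below by $C_1Q_0$, and note $\Phi^n(\theta)=\Phi^{n-n_0}(\Phi^{n_0}(\theta))$. Then $q_n$ is finite, lies in an order interval $[C_1Q_0,\,cQ_0]$ for a suitable $c>0$, and, iterating \eqref{basic10} together with monotonicity of $\Phi^m$, satisfies $q_{n+m}\le\hf^m(q_n)$ for all $n,m$.

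The core step is to show the fluctuations vanish: $\mathrm{Var}_n:=\mathbf{E}\|\Phi^n(\theta)(\eta)-q_n\|^2\to0$. Writing $\theta_{m+1}=\Phi\big((\Phi^m(\theta)(\eta\cdot i))_{i\in I}\big)(\eta)$, one expands around the constant field $\iota(q_m)$: the increment is squeezed, in the cone order, between the superadditive (concavity) lower bounds of Section~\ref{homogsec} and the linear map $H_{Q_0}$ from above, which lets one estimate $\mathrm{Var}_{m+1}$ in terms of $\mathrm{Var}_m$ and $\|q_{m+1}-\hf(q_m)\|$. This recursion contracts: each cell has $N\ge|V_0|\ge2$ strictly sub--full sub--cells — quantitatively $0<(A_{k,Q_0})_{ij}<1$ for $k\ne i$ — so the renormalisation genuinely averages the independent sub--block fluctuations, while the Perron--Frobenius structure of the positive map $\hh_{Q_0}$ (spectral radius $1$, attained at the interior eigenvector $Q_0$) damps the component of the fluctuation transverse to $Q_0$. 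Finiteness of the second moments is exactly what makes this $L^2$ recursion closeable, and iterating it gives $\mathrm{Var}_n\to0$.

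Finally, $\mathrm{Var}_n\to0$ feeds back to give convergence of $q_n$: since $\Phi^m$ is smooth away from the boundary of the cone, $\mathbf{E}[\Phi^m(\zeta)]=\Phi^m(\mathbf{E}\zeta)+o(1)$ as the fluctuations of $\zeta$ tend to $0$, so $q_{n+m}=\hf^m(q_n)+o(1)$ as $n\to\infty$. Under Assumption~\ref{thm:ass1}, $\hf$ is order--preserving, positively homogeneous, has the strictly positive fixed point $Q_0$, and eventually maps $\cq_{irr}$ into $\mbox{Int}(\cq_M)$; the resulting (nonlinear Perron--Frobenius) contraction of $\hf^m$ transverse to the fixed ray $\{cQ_0:c>0\}$, applied to the bounded sequence $(q_n)\subset[C_1Q_0,cQ_0]$, forces $(q_n)$ to be Cauchy, exactly as in the uniformly elliptic case of \cite{Kum2,kk}. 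Its limit $Q_{\mathbf P}$ satisfies $Q_{\mathbf P}\ge C_1Q_0$, hence $Q_{\mathbf P}\in\mbox{Int}(\cq_M)$, and $\mathbf{E}\|\Phi^n(\theta)(\eta)-Q_{\mathbf P}\|\le\sqrt{\mathrm{Var}_n}+\|q_n-Q_{\mathbf P}\|\to0$, uniformly in $\eta$ since none of the estimates depended on $\eta$. The main obstacle is the variance recursion of the third paragraph: making the ``averaging plus transverse contraction'' mechanism quantitative for the nonlinear map $\Phi$ and verifying a genuine contraction factor, which is where both the fractal geometry ($0<(A_{k,Q_0})_{ij}<1$) and Assumption~\ref{thm:ass2} are crucially used.
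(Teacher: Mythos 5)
Your high-level template (prove the means converge, prove the fluctuations vanish, combine via the triangle inequality) is a natural first instinct, but it is not what the paper does, and the step you yourself flag as ``the main obstacle'' is a genuine gap that I do not believe closes with the tools you invoke.

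Two concrete problems. First, the concavity property of $\Phi$ recorded in Section~\ref{homogsec} --- $\Phi(a\theta+b\theta')\ge a\Phi(\theta)+b\Phi(\theta')$ --- gives, via Jensen, only the \emph{one-sided} inequality $\mathbf{E}[\Phi(\zeta)]\le\Phi(\mathbf{E}\zeta)$ and the domination $\Phi\le H_{Q_*}$ (the tangent linearisation at a fixed point lies above a concave map). It does not give a pointwise lower bound on $\Phi(\zeta)$ around $\iota(q_m)$, so there is no ``squeeze'' and the recursion $\mathrm{Var}_{m+1}\le c\,\mathrm{Var}_m+\cdots$ is not justified. Indeed the paper never establishes $\mathrm{Var}_n=\mathbf{E}\|\Phi^n(\theta)(\eta)-q_n\|^2\to0$, which would give $L^2$ convergence; it only proves $L^1$. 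The $L^2$ estimate that does hold (Lemma~\ref{thm:EHX}) concerns exclusively the \emph{linear} map $H^n_{Q_*}$, where the variance of a sum of independent per-cell contributions can be computed directly and the decay comes from the spectral estimate $\varrho_{Q_0}^n\,{}^tA_{i_n}\cdots{}^tA_{i_1}Q_*A_{i_1}\cdots A_{i_n}\le(1-\varepsilon)^nQ_*$ of Kusuoka; this computation does not transfer to the nonlinear $\Phi$, and the paper uses the comparison $\Phi^n\le H^n_{Q_*}$ only to obtain an a.s.\ one-sided bound $\Phi^{m+n_0}(\theta)(\eta)\le(1+\varepsilon)Q_+$ for large $m$, not a variance estimate. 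Second, your choice of linearisation point $Q_0$ is not the right one: the limit $Q_{\mathbf P}$ is the fixed point $Q_+$ of $\hf$, constructed in the paper as the decreasing limit of the Peirone fixed points $Q_m=\lim_n\hf^n(\phi_m)$ (Proposition~\ref{thm:peir}), and nothing in Assumption~\ref{thm:ass1} forces $Q_+$ to lie on the ray $\{cQ_0\}$. A ``transverse Perron--Frobenius contraction towards $\{cQ_0\}$'' would therefore pull $q_n$ to the wrong place. Relatedly, your approximation $q_{n+m}=\hf^m(q_n)+o(1)$ requires quantitative Lipschitz control of $\Phi^m$ near $q_n$ that is uniform in $m$; the paper has only the Jensen inequality $q_{n+m}\le\hf^m(q_n)$ in that direction.

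The paper's actual route around both problems is different in kind: construct $Q_+$ via Peirone's deterministic result applied to the means $\phi_m$; show $\phi_m\to Q_+$ by sandwiching $\hh_{Q_+}^n(\phi_m)$ between $(1\pm\varepsilon)Q_+$ and using the Jordan/projection structure of $\hh_{Q_+}$; combine the a.s.\ upper bound on $\Phi^n(\theta)(\eta)$ (from $\Phi^n\le H^n_{Q_+}$ and Lemma~\ref{thm:EHX}) with reverse Fatou to obtain $\limsup_nS_{\Phi^n(\theta)(\eta)}(u,u)=S_{Q_+}(u,u)$ $\mathbf{P}$-a.s.; and finally upgrade a.s.\ convergence plus the uniform second moment bound to $L^1$ convergence by a Scheff\'e/uniform-integrability lemma from \cite{kk}. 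If you want to salvage your approach, you would need to both identify $Q_+$ (not $Q_0$) as the linearisation point and find a genuine two-sided control on $\Phi$, neither of which your sketch supplies.
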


The rest of this subsection is devoted to proving Theorem \ref{thm:conth}. The next proposition is a restricted version of the result by Peirone \cite{peir}, whose original ideas come from Sabot \cite{sab}; see \cite[Appendix $A$]{Kum2} for the proof.

\begin{propn}\label{thm:peir}
Under Assumption $\ref{thm:ass1}$, for each $M\in \cq_{irr}$,
there exists $Q_M \in \mbox{Int}(\cq_M)$ such that
$Q_M=\lim_{n\to\infty} \hf^n(M)$.
\end{propn}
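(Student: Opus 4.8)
The plan is to treat $\hf=\varrho_{Q_0}\tilde\Phi$ as a monotone, positively homogeneous and concave self-map of the cone $\cq_+$ having the fixed point $Q_0\in\mbox{Int}(\cq_M)$ supplied by Assumption~\ref{thm:ass1}(2), and to run a Birkhoff-type contraction argument in Hilbert's projective metric, along the lines of Sabot \cite{sab} and Peirone \cite{peir} (cf.\ \cite[Appendix~A]{Kum2}). First I would record the structural properties of $\hf$: positive homogeneity of degree one, $\hf(cQ)=c\,\hf(Q)$ for $c>0$, which is immediate from the variational definition of $\tilde\Phi$ via $\hat{S}^{(1)}_\theta$; monotonicity for the form order, $Q\le Q'\Rightarrow\hf(Q)\le\hf(Q')$; and superadditivity, $\hf(Q+Q')\ge\hf(Q)+\hf(Q')$, which together with homogeneity and continuity gives concavity. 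Monotonicity and superadditivity are the analogues for $\hf$ of the properties of $\Phi$ recorded in the Lemma preceding \eqref{basic10}; continuity of $\hf$ on $\cq_+$ is elementary, each matrix entry being an infimum of a continuous finite-dimensional family. Also $\hf(Q_0)=Q_0$ by Assumption~\ref{thm:ass1}(2).

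Next I would reduce to the case $M\in\mbox{Int}(\cq_M)$: by Assumption~\ref{thm:ass1}(1) there is $n_0=n_0(M)$ with $M':=\hf^{n_0}(M)\in\mbox{Int}(\cq_M)$, and $\mbox{Int}(\cq_M)$ is $\hf$-invariant (apply Assumption~\ref{thm:ass1}(1) again, using $\mbox{Int}(\cq_M)\subseteq\cq_{irr}$), so it suffices to treat $\hf^n(M')$. For such $M'$ there are $0<c_-\le c_+<\infty$ with $c_-Q_0\le M'\le c_+Q_0$ in the form order, and then monotonicity, homogeneity and $\hf(Q_0)=Q_0$ give
\[
c_-Q_0=\hf^n(c_-Q_0)\le\hf^n(M')\le\hf^n(c_+Q_0)=c_+Q_0,\qquad n\ge0,
\]
so the whole orbit lies in the compact, $\hf$-invariant order interval $\mathcal{K}:=\{Q\in\cq_M:c_-Q_0\le Q\le c_+Q_0\}$.

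The core step is a uniform contraction estimate on $\mathcal{K}$. Setting $\alpha_n:=\sup\{s>0:sQ_0\le\hf^n(M')\}$ and $\beta_n:=\inf\{t>0:\hf^n(M')\le tQ_0\}$, monotonicity and homogeneity make $\alpha_n$ non-decreasing and $\beta_n$ non-increasing, and $\log(\beta_n/\alpha_n)=d_H(\hf^n(M'),Q_0)$, where $d_H$ is Hilbert's projective metric on $\cq_+$ adapted to the form order. Since $\hf$ is monotone, homogeneous of degree one and concave, and maps $\mathcal{K}$ into $\mathcal{K}$, whose $d_H$-diameter is at most $2\log(c_+/c_-)<\infty$, I would invoke Birkhoff's contraction theorem and its nonlinear versions for concave order-preserving homogeneous maps to obtain $\lambda=\lambda(c_-,c_+)<1$ with $d_H(\hf(Q),Q_0)\le\lambda\,d_H(Q,Q_0)$ for all $Q\in\mathcal{K}$ (using $\hf(Q_0)=Q_0$). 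Iterating gives $\log(\beta_n/\alpha_n)\le\lambda^{n-1}d_H(\hf(M'),Q_0)\to0$, hence $\beta_n/\alpha_n\to1$; together with $\alpha_n\uparrow\alpha_\infty\le\beta_\infty\downarrow\beta_n$ this forces $\alpha_\infty=\beta_\infty=:\gamma\in(0,\infty)$, and then the sandwich $\alpha_nQ_0\le\hf^n(M')\le\beta_nQ_0$ yields $\hf^n(M')\to\gamma Q_0$ in the finite-dimensional space $\cq$. Thus $Q_M:=\gamma Q_0\in\mbox{Int}(\cq_M)$ is the required limit, and it is automatically a fixed point of $\hf$; in particular the interior fixed point is unique up to scaling.

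The hard part will be the uniform contraction step: promoting the automatic non-expansiveness of $\hf$ (from monotonicity plus homogeneity) to a strict contraction needs both the concavity of $\hf$ and control near the boundary of $\cq_M$, and this is exactly where Assumption~\ref{thm:ass1}(1) is indispensable --- it places the orbit in $\mbox{Int}(\cq_M)$ and, equivalently, makes a suitable iterate $\hf^{n_0}$ strictly positivity-improving, so that the linearisation $D\hf^{n_0}(Q_0)$ has, by Perron--Frobenius, spectral radius below one transverse to the scaling direction. Making this precise --- either directly via Bushell's contraction theorem for concave operators on cones, or by linearising $\hf$ at $Q_0$ --- is the technical heart, and is the content of the Sabot--Peirone argument cited above.
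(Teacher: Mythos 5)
The paper offers no internal proof of this proposition: it is quoted from Peirone \cite{peir} (with Sabot \cite{sab} and \cite[Appendix A]{Kum2} cited for the argument), so the only comparison available is with that external argument. Your preliminary structure is fine and does match how those proofs begin: monotonicity, degree-one homogeneity and superadditivity of $\hf$, the reduction to $M'=\hf^{n_0}(M)\in\mbox{Int}(\cq_M)$ via Assumption \ref{thm:ass1}(1), the sandwich $c_-Q_0\le\hf^n(M')\le c_+Q_0$, and the monotonicity of $\alpha_n$ and $\beta_n$ are all correct and standard.

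The core step, however, is not merely ``technically hard''---it is false under the stated hypotheses, so the strategy cannot be completed. A uniform strict contraction $d_H(\hf(Q),Q_0)\le\lambda\,d_H(Q,Q_0)$ with $\lambda<1$ would force every fixed point of $\hf$ in $\mbox{Int}(\cq_M)$ to be a scalar multiple of $Q_0$, and your conclusion $Q_M=\gamma Q_0$ asserts exactly this uniqueness of the eigenform up to scaling. That fails for fractals covered by Assumption \ref{thm:ass1}. The Vicsek set is a nested fractal, hence satisfies the assumption by the remark following it, yet its interior eigenforms form a genuine one-parameter projective family: writing $r_{ij}$ for the resistances of the traced form on $V_0=\{a_1,\dots,a_4\}$, the tree structure gives $R_{\hf(Q)}(a_1,a_3)=R_Q(a_1,a_3)$ and $R_{\hf(Q)}(a_1,a_2)=\tfrac13\bigl(r_{12}+r_{13}+r_{24}\bigr)$, so the two diagonal resistances are conserved quantities of the iteration and any star network with leg resistances $(s,s,t,t)$ yields an eigenform in $\mbox{Int}(\cq_M)$; consequently $\lim_n\hf^n(M)$ remembers $R_M(a_1,a_3)$ and $R_M(a_2,a_4)$ separately and is not proportional to $Q_0$ in general. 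This dependence on $M$ is also why the proposition writes $Q_M$ and why the proof of Theorem \ref{thm:conth} treats the limits $Q_m=\lim_n\hf^n(\phi_m)$ as a decreasing, non-constant sequence rather than as multiples of one another. The technical reason your contraction cannot be rescued: monotonicity plus homogeneity yield only non-expansiveness in Hilbert's projective metric, concavity adds nothing (linear maps are concave), and Birkhoff's theorem needs the image of the cone to have finite projective diameter---confining the orbit to a finite-diameter order interval does not supply this (the identity map preserves such intervals). The Peirone--Sabot argument therefore works with a non-expansive map and a finer analysis of the orbit's accumulation points; a self-contained proof has to follow that route rather than a Perron--Frobenius/Birkhoff contraction to a single eigendirection.
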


The next lemma is an adaptation of \cite[Lemma 4.1]{kk}, but the proof requires serious modification from the latter work to cover our more general setting. We denote by $H^n_{Q_*}$ the $n$-th iteration of $H_{Q_*}$.

\begin{lemma}\label{thm:EHX}
Let $Q_*\in \mbox{Int}(\cq_M)$ satisfy $\hf (Q_*)=Q_*$. Under Assumption \ref{thm:ass2}, there exist $c_{1}>0$ and $0<\varepsilon <1$ such that
\begin{equation}\label{eq:neow}
\mathbf{E}[\Vert H^n_{Q_*}(\Phi^{n_0}(\theta))(\eta)-H^n_{Q_*}(\mathbf{E}[\Phi^{n_0}(\theta)])(\eta)\Vert^2]\le c_{1} (1-\varepsilon)^n,~~~\forall\eta\in \Xi_+, n\ge 1.
\end{equation}
In particular, it $\mathbf{P}$-a.s.\ holds that
\[\lim_{n\to\infty}\Vert H^n_{Q_*}(\Phi^{n_0}(\theta))(\eta)-H^n_{Q_*}(\mathbf{E}[\Phi^{n_0}(\theta)])(\eta)\Vert= 0,\qquad\forall\eta\in \Xi_+.\]
\end{lemma}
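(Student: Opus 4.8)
The plan is to track the centred fluctuation
\[
Z_n(\eta):=H^n_{Q_*}(\Phi^{n_0}(\theta))(\eta)-H^n_{Q_*}(\mathbf{E}[\Phi^{n_0}(\theta)])(\eta).
\]
Since $H_{Q_*}$ is linear and commutes with $\mathbf{E}$, and since the $(\theta(\eta))_{\eta\in\Xi_+}$ are identically distributed so that $\bar Q:=\mathbf{E}[\Phi^{n_0}(\theta)(\eta)]$ does not depend on $\eta$, one has $\mathbf{E}[\Phi^{n_0}(\theta)]=\iota(\bar Q)$ and hence $H^n_{Q_*}(\mathbf{E}[\Phi^{n_0}(\theta)])(\eta)=\hat H^n_{Q_*}(\bar Q)$; thus $Z_n(\eta)$ is exactly the left-hand side of \eqref{eq:neow}. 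Iterating $H_{Q_*}(\theta)(\eta)=\varrho_{Q_0}\sum_{k\in I}{}^tA_{k,Q_*}\theta(\eta\cdot k)A_{k,Q_*}$ gives the recursion
\[
Z_n(\eta)=\varrho_{Q_0}\sum_{k\in I}{}^tA_{k,Q_*}\,Z_{n-1}(\eta\cdot k)\,A_{k,Q_*},\qquad Z_0(\eta)=\Phi^{n_0}(\theta)(\eta)-\bar Q.
\]

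First I would record the probabilistic inputs. \emph{Locality}: $\Phi^{n_0}(\theta)(\eta')$ is a deterministic function of $(\theta(\eta'\cdot w))_{w\in I^{n_0}}$ — immediate from the definitions of $\Phi$ and of the self-similar forms, since $n_0$ renormalisation steps only see $n_0$ levels of refinement; it is here, rather than at the level of $\theta$ itself, that the local dependence permitted in our model is harmless. Consequently $H^m_{Q_*}(\Phi^{n_0}(\theta))(\eta')$ depends only on $(\theta(\eta'\cdot v))_{v\in I^{m+n_0}}$, so that for every $m$ the family $\{Z_m(\eta\cdot k)\}_{k\in I}$ consists of \emph{independent} random matrices (the index sets $\{\eta\cdot k\cdot v:v\in I^{m+n_0}\}$ being pairwise disjoint in $k$), each of \emph{mean zero} (again by linearity of $H^m_{Q_*}$ and $\mathbf{E}$), and each distributed as $Z_m(\eta)$. \emph{Second moments}: by Assumption~\ref{thm:ass2} every entry of $\Phi^{n_0}(\theta)(\eta)$ is square-integrable, so — $\cq$ being finite-dimensional — $\mathbf{E}[\|Z_0(\eta)\|^2]<\infty$, and a crude norm bound in the recursion propagates $\mathbf{E}[\|Z_n(\eta)\|^2]<\infty$ to all $n$.

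The main step is a one-step $L^2$-contraction. Because the summands ${}^tA_{k,Q_*}Z_{n-1}(\eta\cdot k)A_{k,Q_*}$ are independent with mean zero, for \emph{any} inner-product norm on $\cq$ the cross terms in $\mathbf{E}[\|Z_n(\eta)\|^2]$ vanish, whence, using the identical distribution of the $Z_{n-1}(\eta\cdot k)$,
\[
\mathbf{E}[\|Z_n(\eta)\|^2]=\varrho_{Q_0}^2\sum_{k\in I}\mathbf{E}\big[\big\|{}^tA_{k,Q_*}Z_{n-1}(\eta)A_{k,Q_*}\big\|^2\big].
\]
It therefore suffices, exactly as in \cite[proof of Lemma 4.1]{kk}, to produce an inner-product norm $\|\cdot\|$ on $\cq$ and a constant $\varepsilon\in(0,1)$ for which the purely deterministic inequality $\varrho_{Q_0}^2\sum_{k\in I}\|{}^tA_{k,Q_*}QA_{k,Q_*}\|^2\le(1-\varepsilon)\|Q\|^2$ holds for all $Q\in\cq$; granting this, $\mathbf{E}[\|Z_n(\eta)\|^2]\le(1-\varepsilon)\mathbf{E}[\|Z_{n-1}(\eta)\|^2]\le\cdots\le(1-\varepsilon)^{n-1}\mathbf{E}[\|Z_1(\eta)\|^2]$, which is \eqref{eq:neow} after reverting to the norm of the statement, all norms on $\cq$ being equivalent. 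This deterministic contraction is the technical heart and the main obstacle: it must exploit the fixed-point identity $\varrho_{Q_0}\sum_{k}{}^tA_{k,Q_*}Q_*A_{k,Q_*}=Q_*$ (which says that the factor $\varrho_{Q_0}$ exactly balances the energy-contraction of the harmonic-extension operators, leaving no room lost on the eigendirection $\mathbb{R}Q_*$) together with the strict positivity of the off-diagonal entries of the $A_{k,Q_*}$ noted above — equivalently the irreducibility/primitivity of $\hat H_{Q_*}$, which by Perron–Frobenius makes $1$ a simple eigenvalue and forces a genuine contraction (after passing, if necessary, to a finite iterate and renorming) on the complementary invariant subspace. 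Carrying this operator-theoretic core over from the uniformly elliptic setting of \cite{kk} to ours, where $Z_0$ is only square-integrable, is precisely the ``serious modification'' alluded to earlier, and amounts to replacing each use of an upper bound on $\theta$ in \cite{kk} by the second-moment bound of Assumption~\ref{thm:ass2}.

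Finally, the almost sure statement is immediate from \eqref{eq:neow}: $\sum_{n\ge1}\mathbf{E}[\|H^n_{Q_*}(\Phi^{n_0}(\theta))(\eta)-H^n_{Q_*}(\mathbf{E}[\Phi^{n_0}(\theta)])(\eta)\|^2]<\infty$, so the summand tends to $0$ $\mathbf{P}$-a.s.\ (by Borel–Cantelli, or simply because a non-negative sequence with summable expectation is a.s.\ summable), and since $\Xi_+$ is handled one $\eta$ at a time this holds simultaneously for all $\eta\in\Xi_+$.
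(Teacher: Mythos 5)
Your reduction is correct up to a point: the recursion for $Z_n(\eta)$, the role of locality and the finitely ramified structure in guaranteeing that $\{Z_{n-1}(\eta\cdot k)\}_{k\in I}$ are independent and mean-zero, the resulting vanishing of cross terms so that
\[
\mathbf{E}\bigl[\|Z_n(\eta)\|^2\bigr]=\varrho_{Q_0}^2\sum_{k\in I}\mathbf{E}\bigl[\bigl\|{}^tA_{k,Q_*}Z_{n-1}(\eta)A_{k,Q_*}\bigr\|^2\bigr],
\]
and the final Borel--Cantelli step are all fine. But the argument then reduces everything to the deterministic one-step contraction
\[
\varrho_{Q_0}^2\sum_{k\in I}\bigl\|{}^tA_{k,Q_*}QA_{k,Q_*}\bigr\|^2\le(1-\varepsilon)\|Q\|^2,\qquad\forall\,Q\in\cq,
\]
and this is where there is a genuine gap. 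You flag it yourself as the ``technical heart'', but the mechanism you propose for it --- Perron--Frobenius/primitivity of $\hat H_{Q_*}$ --- does not establish it. Primitivity of $\hat H_{Q_*}=\sum_k S_k$ (with $S_kQ:=\varrho_{Q_0}{}^tA_kQA_k$) controls $\|\sum_k S_kQ\|^2$, which after renorming and passing to a power contracts on the complement of $\mathbb{R}Q_*$ and does \emph{not} contract on $\mathbb{R}Q_*$ (there the eigenvalue is exactly $1$). What you need is a bound on $\sum_k\|S_kQ\|^2$, which is a different quadratic form: the two differ by the cross terms $\sum_{k\neq k'}\langle S_kQ,S_{k'}Q\rangle$, which can vanish, so that a strict contraction of $\hat H_{Q_*}$ off $\mathbb{R}Q_*$ is perfectly compatible with $\sum_k\|S_kQ\|^2=\|Q\|^2$ at $Q=Q_*$ (take $S_1(x,y)=\tfrac12(x+y,0)$, $S_2(x,y)=\tfrac12(0,x+y)$ on $\mathbb{R}^2$ for a cheap illustration of the failure). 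The actual input one must use is Kusuoka's estimate $\varrho_{Q_0}\,{}^tA_{k}Q_*A_{k}\le(1-\varepsilon)Q_*$, and then the $\ell^\infty$--$\ell^1$ interpolation $\sum_k t_k^2\le(\max_kt_k)(\sum_kt_k)$ combined with trace conservation $\sum_k\mathrm{Trace}(\varrho_{Q_0}{}^tA_kQ_*A_k)=\mathrm{Trace}\,Q_*$, not just positivity of off-diagonal entries.

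There is a second, more structural reason why a one-step iteration does not mesh with the available estimates. The paper bounds $\mathbf{E}[\mathrm{Trace}(\theta'^{(1)})^2]$ via the inequality $\mathrm{Trace}(B^2)\le(\mathrm{Trace}\,B)^2$, which requires $B={}^tA\theta'A$ (equivalently $\theta'\in\cq_+$) to be sign-definite. The centred matrix $Z_{n-1}(\eta)$ has no sign, so the trace inequality cannot be applied inductively one level at a time; this is precisely why the paper carries out the full $n$-step computation with the \emph{un-centred} matrices ${}^tA_{i_n}\cdots{}^tA_{i_1}\theta'(\eta\cdot i_1\cdots i_n)A_{i_1}\cdots A_{i_n}$ before recentering. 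In short, a correct proof along your one-step lines would require an independent proof of the deterministic inequality above (it is in fact true for the Sierpi\'nski gasket, but verifying it in the generality of Assumption~\ref{thm:ass1} would be a new argument), whereas the paper's $n$-step trace computation sidesteps that question entirely. As written, the claimed contraction is asserted, not proved, and the proposed justification is incorrect, so the proof is incomplete at its central step.
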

\begin{proof}
Let the left hand side of $(\ref{eq:neow})$ be $f(n,\eta)$ and set $\theta'=\Phi^{n_0}(\theta)$.
Further, let
\begin{eqnarray*}
\theta'^{(1)}_{i_1,\cdots,i_n}(\eta)&=&{}^tA_{i_n}\cdots{}^tA_{i_1}
\theta'(\eta\cdot i_1\cdots\cdot i_n)A_{i_1}\cdots A_{i_n},\\
\theta'^{(2)}_{i_1,\cdots,i_n}(\eta)&=&{}^tA_{i_n}\cdots{}^tA_{i_1}
\mathbf{E}[\theta'(\eta\cdot i_1\cdots\cdot i_n)]A_{i_1}\cdots A_{i_n},\\
\theta'_{i_1,\cdots,i_n}(\eta)&=&\theta'^{(1)}_{i_1,\cdots,i_n}(\eta)
-\theta'^{(2)}_{i_1,\cdots,i_n}(\eta),
\end{eqnarray*}
where we set $A_i:=A_{i,Q_*}$. Then we have
\begin{eqnarray*}
f(n,\eta) &\leq& c\varrho_{Q_0}^{2n}
\mathbf{E}\left[\mbox{Trace}\left[\left(\sum_{i_1,\cdots,i_n}\theta'_{i_1,\cdots,i_n}(\eta)\right)^2\right]\right]\\
&=&c
\varrho_{Q_0}^{2n}\sum_{i_1,\cdots,i_n}
\mathbf{E}\left[\mbox{Trace}\left[\left(\theta'_{i_1,\cdots,i_n}(\eta)\right)^2\right]\right]\\
&= &c\varrho_{Q_0}^{2n}\sum_{i_1,\cdots,i_n}
\left(\mathbf{E}\left[\mbox{Trace}\left[\left(\theta'^{(1)}_{i_1,\cdots,i_n}(\eta)\right)^2\right]\right]-\mbox{Trace}\left[\left(\theta'^{(2)}_{i_1,\cdots,i_n}(\eta)\right)^2\right]\right)\\
&\le& c\varrho_{Q_0}^{2n}
\sum_{i_1,\cdots,i_n}\mathbf{E}\left[\left(\mbox{Trace}\:\theta'^{(1)}_{i_1,\cdots,i_n}(\eta)\right)^2\right],
\end{eqnarray*}
where the first equality is because $\theta'_{i_1,\cdots,i_n}(\eta)$ and $\theta'_{j_1,\cdots,j_n}(\eta)$ are independent (because of the finitely ramified property) and mean zero for $({i_1,\cdots,i_n})\ne (j_1,\cdots,j_n)$, and the last inequality is because
$\mbox{Trace } (B^2) \le (\mbox{Trace } B)^2$ for any non-negative definite
symmetric matrix $B$.

Set $A=(a_{ij})=A_{i_1}\cdots A_{i_n}$, $(x_{ij})
=\theta'(\eta\cdot i_1\cdots\cdot i_n)$.
Then we have
\begin{eqnarray*}
(\theta'^{(1)}_{i_1,\cdots,i_n}(\eta))_{ij}=\sum_{k,l}a_{ki}a_{lj}x_{kl}
=-\frac 12\sum_{k,l: k\ne l}(a_{ki}-a_{li})(a_{kj}-a_{lj})x_{kl},
\end{eqnarray*}
because $x_{ll}=-\sum_{k:k\ne l}x_{kl}$. Thus, denoting
$Q_*=(q_*)_{ij}$, we have
\begin{eqnarray*}
\lefteqn{\mathbf{E}\left[\left(\mbox{Trace}\:\theta'^{(1)}_{i_1,\cdots,i_n}(\eta)\right)^2\right]}\\
&=&\mathbf{E}\left[\left(-\frac 12\sum_i\sum_{k,l: k\ne l}(a_{ki}-a_{li})^2x_{kl}\right)^2\right]\\
&=&\frac 14\sum_i\sum_{k,l: k\ne l}\sum_{i'}\sum_{k',l': k'\ne l'}
(a_{ki}-a_{li})^2(a_{k'i'}-a_{l'i'})^2\mathbf{E}[x_{kl}x_{k'l'}]\\
&\le&\frac 14\sum_i\sum_{k,l: k\ne l}\sum_{i'}\sum_{k',l': k'\ne l'}
(a_{ki}-a_{li})^2(a_{k'i'}-a_{l'i'})^2(\mathbf{E}[x_{kl}^2])^{1/2}(\mathbf{E}[x_{k'l'}^2])^{1/2}\\
&=&\frac 14\left(\sum_i\sum_{k,l: k\ne l}(a_{ki}-a_{li})^2(\mathbf{E}[x_{kl}^2])^{1/2}\right)^2\\
&\le& c_1\left(\sum_i\sum_{k,l: k\ne l}(a_{ki}-a_{li})^2
(q_*)_{kl}\right)^2,
\end{eqnarray*}
where the last inequality is because there exists $c_*>0$ such that $\mathbf{E}[(\Phi^{n_0}(\theta)(\eta)_{ij})^2]\le c_*$ for all $i, j\in I_{Fix}$, which is due to Assumption $\ref{thm:ass2}$. In particular, we obtain that
\begin{eqnarray*}
f(n,\eta) &\le&c_2\varrho_{Q_0}^{2n}\sum_{i_1,\cdots,i_n}
\{\mbox{Trace }{}^tA_{i_n}\cdots{}^tA_{i_1}
Q_* A_{i_1}\cdots A_{i_n}\}^2.
\end{eqnarray*}
Now, from the proof of \cite[Proposition (5.5)]{kus}, we have
\[\varrho_{Q_0}^{n}~ {}^tA_{i_n}\cdots{}^tA_{i_1}Q_* A_{i_1}\cdots  A_{i_n} \le(1-\varepsilon)^n Q_*\]
for some $0 <\varepsilon<1$. (Note that in \cite{kus} it is assumed that
$\sum_{k\ne i}{^tA_kA_k}$ is strictly positive for all $i$, but this assumption is satisfied in our setting; see \cite[Proposition $(7.2)$]{kus}.) Combining this with $\hat H_{Q_*}(Q_*)=Q_*$, we obtain
\[f(n,\eta) \le c'_*  (1-\varepsilon)^n (\mbox{Trace }Q_*)^2\le c_1 (1-\varepsilon)^n.\]
\end{proof}

\begin{proof}[Proof of Theorem \ref{thm:conth}] Let $\phi_m=\mathbf{E}[\Phi^{m+n_0}(\theta)(\eta)]$ ($\phi_m$ is independent of $\eta$). By Assumption $\ref{thm:ass2}$ and Proposition \ref{thm:peir}, for each $m\in \bn$, there exists $Q_m\in \mbox{Int} (\cq_M)$ such that $\lim_{n\to\infty} \hf^n(\phi_m)=Q_m$ and $\hf(Q_m)=Q_m$. On the other hand,
similarly to (\ref{basic10}) we see
\begin{equation}\label{eq:rnrm}
\hf^n (\phi_m)\ge \phi_{n+m}\qquad\forall m,n\in \bn\cup\{0\},
\end{equation}
so that $Q_m\ge Q_{n+m}$. Denote the limit of $(Q_m)_{m\geq 0}$ by $Q_+$; then $\hf(Q_+)=Q_+$. (Note that $Q_+\in \mbox{Int}(\cq_M)$ due to Assumption \ref{thm:ass1}(1) and the assumption $\mathbf{P}(\{\theta\in \cx_M:  C_1 Q_0\le \theta(\eta)$ for $\eta\in \Xi_+\})=1$.) For any $\varepsilon>0$, there exists $N_{\varepsilon}\in \bn$ such that
\begin{equation}\label{eq:12-add}
(1+\varepsilon)Q_+\ge \phi_m\qquad \forall m\ge N_{\varepsilon}.
\end{equation}
Indeed, if this does not hold, then because there exists $C_*>0$ such that $(\phi_m)_{ij}\le C_*$ for all $i\ne j\in I_{Fix}$ and all $m\in \bn$, there exists a subsequence $(l_j)_{j\geq 0}$ such that $\phi_{l_j}\ge (1+\varepsilon)Q_+$ and $\lim_{j\to\infty}\phi_{l_j}=:{\bar \phi}$ exists. On the other hand, by (\ref{eq:rnrm}), we have $\hf^{l_{j'}-l_j}(\phi_{l_j})\ge \phi_{l_{j'}}$ for all $j'\ge j$ so that $Q_+\ge {\bar \phi}$, which is a contradiction. By the definition of $Q_m$, for each $m$ and $\varepsilon>0$, there exists $L_{m,\varepsilon}$ such that $(1-\varepsilon)Q_m\le \hf^n (\phi_m)$ for all $n\ge L_{m,\varepsilon}$. Combining these facts and noting $\hh_{Q_+}^n(\phi_m)\ge \hf^n(\phi_m)$, we have
\begin{equation}\label{eq:12-1}
(1-\varepsilon)Q_+\le \hh_{Q_+}^n (\phi_m)\le (1+\varepsilon)Q_+\qquad \forall n\ge L_{m,\varepsilon}, m\ge N_{\varepsilon}.
\end{equation}
On the other hand, by Lemma \ref{thm:EHX}, we have $\mathbf{P}$-a.s.\ that
\[\lim_{n\to\infty}\|H_{Q_+}^n(\Phi^{m+n_0}(\theta))(\eta)-\hh_{Q_+}^n(\phi_m)\|=\lim_{n\to\infty}\|H_{Q_+}^n(\Phi^{m+n_0}(\theta))(\eta)- H_{Q_+}^n(\iota(\phi_m))(\eta)\|= 0\]
for all $\eta\in \Xi_+$, $m\geq0$. Since $H_{Q_+}^n(\Phi^{m+n_0}(\theta))(\eta)\ge \Phi^{n+m+n_0}(\theta)(\eta)$, we see that the following holds $\mathbf{P}$-a.s.: for some $N'_{\varepsilon,\eta}\in\bn$,
\begin{equation}\label{eq:12-3}
(1+\varepsilon)Q_+\ge  \Phi^{m+n_0}(\theta)(\eta),\qquad \forall \eta\in \Xi_+, m\ge N'_{\varepsilon,\eta}.
\end{equation}
We now establish some more properties of $\hh_{Q_+}$. It is easy to see $\sup_{n}|||\hh^n_{Q_+}|||<\infty$,
where $|||\hh^n_{Q_+}|||:=\sup_{Q\in \mathcal{Q}_M, \|Q\|=1}\|\hh^n_{Q_{+}}(Q)\|$, see \cite[Lemma 4.3]{kk}. Using this, we see that the size of each Jordan cell corresponding to the largest eigenvalue of $\hh_{Q_+}$ is $1$. We thus obtain that there exists an orthogonal projection $P_0: \cq_M\to\cq_M$ so that for each $k\in \bn$, there exists $n_k\in \bn$ such that
\begin{equation}\label{eq:12-4}
|||\hh^{n_k}_{Q_+}-P_0|||\le 2^{-k}.
\end{equation}
By (\ref{eq:12-1}) and (\ref{eq:12-4}), we have $\phi_m\ge P_0\phi_m\ge (1-\varepsilon)Q_+$ for all $m\ge N_{\varepsilon}$. Together with (\ref{eq:12-add}), we have
\begin{equation}\label{eq:12-5}
\lim_{n\to\infty}\phi_n=Q_+.
\end{equation}
Now, by Fatou's lemma and (\ref{eq:12-5}),
\begin{equation}\label{eq:fatoo}
\mathbf{E}\big[\limsup_{n\to\infty}S_{\Phi^n(\theta)(\eta)}(u,u)\big]
\ge \limsup_{n\to\infty}S_{\phi_n}(u,u)= S_{Q_+}(u,u),
\end{equation}
for all $\eta\in \Xi_+,~u\in l(V_{\eta})$, where $V_{\eta}:=\{\pi([\eta,i]): i\in I_{Fix}\}$ is a $0$-cell whose address is $\eta$. (Note that we can use Fatou's lemma thanks to (\ref{eq:12-3}).) By (\ref{eq:12-3}) and (\ref{eq:fatoo}), we have
\[\limsup_{n\to\infty}S_{\Phi^n(\theta)(\eta)}(u,u)= S_{Q_+}(u,u),\]
$\mathbf{P}$-a.s.\ for all $\eta\in \Xi_+,~u\in l(V_{\eta})$. Applying \cite[Lemma 4.2]{kk} with $Y_n=S_{\Phi^n(\theta)(\eta)}(u,u)$ and $Y=S_{Q_+}(u,u)$
(note that $\sup_n \mathbf{E}[Y_n^2]<\infty$ due to Assumption \ref{thm:ass2}),
we have
\[\lim_{n\to\infty}\mathbf{E}\big[|S_{\Phi^n(\theta)(\eta)}(u,u)
-S_{Q_+}(u,u)|\big]
= 0,\qquad\forall \eta\in \Xi_+,~u\in l(V_{\eta}).\]
Since $l(V_{\eta})$ is finite dimensional, we obtain (\ref{eq:daiji}) where $Q_{\mathbf{P}}=Q_+$.
\end{proof}

\subsubsection {Application to the random conductance model}\label{concsec}

We are now ready to explain the application of the homogenisation results of the previous section to the random conductance model; see Proposition \ref{ssfrcmresult} for the main result. For the setting, we recall the graphs $(V_n,E_n)$, and the associated counting measure $\mu_n$, from Section \ref{ufrsec}. We further suppose each graph is equipped with a collection of random conductances $(\omega^n_e)_{e\in E_n}$ such that the conductances within each $n$-cell, $(\omega^n_e)_{e\subseteq \Psi_{i_1,\dots,i_n}(V_0)}$, are independent, and identically distributed as $(\omega^0_e)_{e\in E_0}$ (and built on a probability space with probability measure $\mathbf{P}$). The associated random resistance metric will be denoted by $R_n^\omega$.

Note that this family of random graphs can be coupled with the framework of the previous section. In particular, suppose that $(\theta(\eta)_{ij})_{i,j=1}^{I_F}$ is distributed as $(\omega^0_{a_i,a_j})_{i,j=1}^{I_F}$, independently for each $\eta$. Then we easily see that the random weighted graph $(V_n,E_n,\omega^n)$ is identical in distribution to that given by the conductances associated with $\theta$ on $\beta^nV_n\subseteq\hat{V}$. We will fix this identification throughout the section, and typically suppose that Assumptions \ref{thm:ass1} and  \ref{thm:ass2} are satisfied accordingly. This means that we can define the $Q_\mathbf{P}$ for which the conclusion of Theorem \ref{thm:conth} holds.

We next describe the limiting object. First, let $R_n$ be the resistance metric on $V_n$ induced by placing conductances according to $Q_{\mathbf{P}}$ along edges of $n$-cells, i.e.\ setting the conductance from $\Psi_{i_1,\dots,i_n}(a_i)$ to $\Psi_{i_1,\dots,i_n}(a_j)$ to be  $(Q_\mathbf{P})_{ij}$. From the fact that $\hat{\Phi}(Q_{\mathbf{P}})=Q_{\mathbf{P}}$, it follows that there exists a resistance metric $R$ on $V_*:=\cup_{n\geq 0}V_n$ defined by setting $R:=\varrho_{Q_0}^{-n}R_n$ on $V_n$, where $\varrho_{Q_0}>1$ is the scaling factor given by Assumption \ref{thm:ass1}. Moreover, by \cite[Theorems 2.3.10 and 3.3.4]{kig1}, taking the completion of the metric space $(V_*,R)$ yields a resistance metric $R$ on the u.f.r.\ fractal $F$, which is topologically equivalent to the Euclidean metric. It is moreover an elementary exercise to check that $N^{-n}\mu_n\rightarrow \mu$, where $\mu$ is the (unique up to a constant multiple) self-similar measure on $F$, placing equal weight on each $1$-cell; this measure is non-atomic and has full-support. We observe that, for any $\rho_n\in V_n$ such that $\rho_n\rightarrow \rho$ (with respect to $R$, or equivalently the Euclidean metric), we have that $(V_n,a_nR_n,b_n\mu_n,\rho_n)\rightarrow(F,R,\mu,\rho)$ in $\mathbb{F}_c$ with respect to the Gromov-Hausdorff-vague topology for $a_n=\varrho_{Q_0}^{-n}$, $b_n=N^{-n}$. We moreover note that $(V_n,a_nR_n,b_n\mu_n,\rho_n)_{n\geq 1}$ satisfies
UVD (see \cite[Lemma 3.2]{HK}).

As in Section \ref{rcmtreesec}, to get from the convergence of the previous paragraph to the convergence of the VSRW associated with the random conductances $(\omega^n_e)_{e\in E_n}$, we need to establish the convergence of the random metric $R_n^\omega$. This is the aim of the following lemma.

{\lemma \label{ssfresconv} Suppose Assumptions \ref{thm:ass1} and  \ref{thm:ass2} hold, and that the conductances $(\omega_e^0)_{e\in E_0}$ are uniformly bounded from below (i.e.\ there exists a constant $c>0$ such that $\omega_e^0\geq c$, $\mathbf{P}$-a.s.). Then it is the case that in $\mathbf{P}$-probability
\[\sup_{x,y\in V_n}a_n\left|R_n^\omega(x,y)-R_n(x,y)\right|\rightarrow 0.\]}
\begin{proof} Translating Theorem \ref{thm:conth} into the present notation, and noting that, for a finite network, convergence of edge conductances implies convergence of the resistance metric (cf.\ the proof of Lemma \ref{l2}), we obtain for any $x,y\in V_*$ that,
in $\mathbf{P}$-probability, $a_nR_n^\omega(x,y)\rightarrow R(x,y)$. Moreover, the fact that conductances are uniformly bounded below implies that there exists a constant $c_1$ such that $R_n^\omega\leq c_1R_n$, $\mathbf{P}$-a.s. From these two facts, one can deduce the result by following the argument of Lemma \ref{resconv}.
\end{proof}

To prove convergence of the CSRW to the $\alpha$-FIN diffusion, we introduce the random time-change measures $\nu_n$, as given by $\nu_n(\{x\})=\sum_{e\in E_n:\:x\in e}\omega_e^n$. In Lemma \ref{nunconvssf}, we will prove convergence to the limiting FIN measure $\nu$, again obtained from a Poisson process on $(0,\infty)\times F$ with intensity $\alpha v^{-1-\alpha}dv\mu(dx)$, under the following assumption. We note, in this setting, it makes sense to state convergence results with respect to the original Euclidean topology, since the objects already have a natural (non-isometric) embedding there. Moreover, we observe that the assumption is satisfied for i.i.d.\ edge weights, each with tails satisfying the same distributional asymptotics.

{\assu\label{ssfassu} There exists a constant $c>0$ such that the random conductance distribution satisfies
\[\mathbf{P}\left(\sum_{e\in E_0}\omega^0_e>u\right)
\sim c u^{-\alpha}\]
for some $\alpha\in(0,1)$.}

{\lemma\label{nunconvssf} Suppose Assumption \ref{ssfassu} holds, then there exists a constant $c_0>0$ such that
$c_0^{-1}b_n^{1/\alpha}\nu_n\rightarrow \nu$ in distribution with respect to the weak topology for finite measures on
$\mathbb{R}^d$.}

\begin{proof} The proof is again similar to the tree case (Lemma \ref{nunconv3}). In particular, it is an easy exercise to check that there exists a constant $c_0>0$ such that, for any $i_1,\dots,i_m\in\{1,\dots,N\}$, $b_n^{1/\alpha}\nu_n(\Psi_{i_1,\dots,i_m}(F))\rightarrow c_0\nu(\Psi_{i_1,\dots,i_m}(F))$ in distribution. From this, the result again follows from \cite[Theorem 16.16]{Kall}.
\end{proof}

From Lemmas \ref{ssfresconv} and \ref{nunconvssf}, we are able to prove the main result of this section by a similar argument to the proof of Proposition \ref{rcmtreeresult}; we thus state it without proof. We write $\mathbb{P}^{{\rm VSRW}_n}_x$ for the annealed law of the VSRW $X^{n,\omega}$ on the graph $V_n$ with conductances $\omega^n$, started from $x$. (Note that in Proposition \ref{rcmtreeresult} convergence of VSRW is shown $\mathbf{P}$-a.s., but here we have only annealed convergence since the convergence in Theorem \ref{thm:conth} is in the
${L}^1$-sense.) We write $\mathbb{P}^{{\rm CSRW}_n}_{x}$ for the annealed law of the corresponding CSRW $X^{n,\omega,\nu}$. We write $P_x$ for the law of the Brownian motion on $(F,R,\mu)$ started from $x$, and $\mathbb{P}^{{\rm FIN}}_{\rho}$ is the annealed law of the associated $\alpha$-FIN diffusion, defined as in Section \ref{bouchsec}.

{\propn\label{ssfrcmresult} Suppose Assumptions \ref{thm:ass1} and  \ref{thm:ass2} hold, and that the conductances $(\omega_e^0)_{e\in E_0}$ are uniformly bounded from below. It is then the case that
\[\mathbb{P}^{{\rm VSRW}_n}_{\rho_n}\left(\left(X^{n,\omega}_{t/ a_nb_n}\right)_{t\geq 0}\in\cdot\right)\rightarrow {P}_\rho\left(\left(X_{t}\right)_{t\geq 0}\in\cdot\right)\]
weakly as probability measures on $D(\mathbb{R}_+, \mathbb{R}^d)$.
Moreover, if Assumption \ref{ssfassu} also holds, then
\[\mathbb{P}^{{\rm CSRW}_n}_{\rho_n}\left(\left(X^{n,\omega,\nu}_{c_0 t/ a_nb_n^{1/\alpha}}\right)_{t\geq 0}\in\cdot\right)
\rightarrow\mathbb{P}^{{\rm FIN}}_{\rho}\left(\left(X^{\nu}_{t}\right)_{t\geq 0}\in\cdot\right)\]
weakly as probability measures on $D(\mathbb{R}_+,
\mathbb{R}^d)$.}

{\example\label{exa6-18} {\rm To continue with the example of the Sierpi\'nski gasket graphs from previous sections, one can also apply Proposition \ref{ssfrcmresult} for this collection. In particular, assuming that the conductances are uniformly bounded below and have at most polynomial decay at infinity, we know that nested fractals satisfy both Assumption \ref{thm:ass1} and \ref{thm:ass2}, and so we obtain the annealed convergence of the VSRW on the Sierpi\'nski gasket graphs as at (\ref{sgvsrw}). Moreover, if it is further the case that the tail behaviour at infinity of the conductances satisfies Assumption \ref{ssfassu}, then we also have the annealed convergence of the CSRW as at (\ref{sgcsrw}).}}

{\rem \label{finitemoments} {\rm When $\mathbf{E}\omega_e^0<\infty$ for each $e\in E_0$, one obtains in place of Lemma \ref{nunconvssf} (via the same argument) that there exists a constant $c_0$ such that $c_0^{-1}b_n\nu_n\rightarrow\mu$. Consequently, if Assumption \ref{ssfassu} is replaced by the assumption of finite first moments, then one can check the annealed limit of the CSRW is Brownian motion, rather than the FIN diffusion that appears in the second statement of Proposition \ref{ssfrcmresult}. A similar remark pertains to Proposition \ref{btmresult} and the second statement of Proposition \ref{rcmtreeresult}.}}

\providecommand{\bysame}{\leavevmode\hbox to3em{\hrulefill}\thinspace}
\providecommand{\MR}{\relax\ifhmode\unskip\space\fi MR }
\providecommand{\MRhref}[2]{%
  \href{http://www.ams.org/mathscinet-getitem?mr=#1}{#2}
}
\providecommand{\href}[2]{#2}

\end{document}